\numberwithin{equation}{section}
\newtheorem{Theorem}{Theorem}[section]
\newtheorem{Corollary}[Theorem]{Corollary}
\newtheorem{Lemma}[Theorem]{Lemma}
\newtheorem{Proposition}[Theorem]{Proposition}
\newtheorem{Condition}[Theorem]{Condition}
 { \theoremstyle{definition}
\newtheorem{Definition}[Theorem]{Definition}
\newtheorem{Remark}[Theorem]{Remark} }
\begin{document}

\allowdisplaybreaks

\newcommand{\arXivNumber}{1612.01486}

\renewcommand{\PaperNumber}{040}

\FirstPageHeading

\ShortArticleName{A System of Dif\/ferential Equations on the Torus}

\ArticleName{A Linear System of Dif\/ferential Equations Related\\ to Vector-Valued Jack Polynomials on the Torus}

\Author{Charles F.~DUNKL}
\AuthorNameForHeading{C.F.~Dunkl}

\Address{Department of Mathematics, University of Virginia,\\ PO Box 400137, Charlottesville VA 22904-4137, USA}
\Email{\href{mailto:cfd5z@virginia.edu}{cfd5z@virginia.edu}}
\URLaddress{\url{http://people.virginia.edu/~cfd5z/}}

\ArticleDates{Received December 11, 2016, in f\/inal form June 02, 2017; Published online June 08, 2017}

\Abstract{For each irreducible module of the symmetric group $\mathcal{S}_{N}$ there is a set of para\-metrized nonsymmetric Jack polynomials in $N$ variables taking values in the module. These polynomials are simultaneous eigenfunctions of a~commutative set of operators, self-adjoint with respect to two Hermitian
forms, one called the contravariant form and the other is with respect to a~matrix-valued measure on the $N$-torus. The latter is valid for the parameter
lying in an interval about zero which depends on the module. The author in a~previous paper [\textit{SIGMA} \textbf{12} (2016), 033, 27~pages] proved the existence of the measure and that its absolutely continuous part satisf\/ies a system of linear dif\/ferential equations. In this paper the system is analyzed in detail. The $N$-torus is divided into $(N-1)!$ connected components by the hyperplanes $x_{i}=x_{j}$, $i<j$, which are the singularities of the system. The main result is that the orthogonality measure has no singular part with respect to Haar measure, and thus is given by a matrix function times Haar measure. This function is analytic on each of the connected components.}

\Keywords{nonsymmetric Jack polynomials; matrix-valued weight function; symmetric group modules}

\Classification{33C52; 32W50; 35F35; 20C30; 42B05}

\vspace{-3mm}

{\small \renewcommand{\baselinestretch}{0.90} \tableofcontents}

\section{Introduction}

The Jack polynomials form a parametrized basis of symmetric polynomials. A~special case of these consists of the Schur polynomials, important in the character theory of the symmetric groups. By means of a commutative algebra of dif\/ferential-dif\/ference operators the theory was extended to nonsymmetric Jack
polynomials, again a parametrized basis but now for all polynomials in~$N$ variables. These polynomials are orthogonal for several dif\/ferent inner products, and in each case they are simultaneous eigenfunctions of a~commutative set of self-adjoint operators. These inner products are invariant under permutations of the coordinates, that is, the symmetric group. One of these inner products is that of $L^{2}\big(\mathbb{T}^{N},K_{\kappa}(x) \mathrm{d}m(x)\big)$, where
\begin{gather*}
\mathbb{T}^{N} :=\big\{ x\in\mathbb{C}^{N}\colon \vert x_{j} \vert =1,\, 1\leq j\leq N\big\} ,\\
\mathrm{d}m(x) =(2\pi) ^{-N}\mathrm{d}\theta_{1}\cdots\mathrm{d}\theta_{N},
\qquad x_{j}=\exp( \mathrm{i}\theta _{j}) , \qquad -\pi<\theta_{j}\leq\pi, \qquad 1\leq j\leq N,\\
K_{\kappa}(x) =\prod_{1\leq i<j\leq N}\vert x_{i}-x_{j}\vert ^{2\kappa}, \qquad \kappa>-\frac{1}{N};
\end{gather*}
def\/ining the $N$-torus, the Haar measure on the torus, and the weight function respectively. Beerends and Opdam \cite{Beerends/Opdam1993} discovered this
orthogonality property of symmetric Jack polyno\-mials. Opdam \cite{Opdam1995} established orthogonality structures on the torus for trigonometric
polynomials associated with Weyl groups; the nonsymmetric Jack polynomials form a special case.

Grif\/feth \cite{Griffeth2010} constructed vector-valued Jack polynomials for the family $G(n,p,N) $ of complex ref\/lection groups. These are the groups of permutation matrices (exactly one nonzero entry in each row and each column) whose nonzero entries are $n^{\rm th}$ roots of unity and the product of these entries is a $(n/p)^{\rm th}$ root of unity. The symmetric groups and the hyperoctahedral groups are the special cases $G(1,1,N) $ and $G(2,1,N)$ respectively. The term ``vector-valued'' means that the polynomials take values in irreducible modules of the underlying group, and the action of the group is on the range as well as the domain of the polynomials. The author \cite{Dunkl2010} together with Luque \cite{Dunkl/Luque2011} investigated the symmetric group case more intensively. The basic setup is an irreducible representation of the symmetric group, specif\/ied by a~partition $\tau$ of~$N$, and a~parameter~$\kappa$ restricted to an interval determined by the partition, namely $-1/h_{\tau}<\kappa<1/h_{\tau}$ where $h_{\tau}$ is the maximum hook-length of the partition~$\tau$. More recently~\cite{Dunkl2016} we showed that there does exist a positive matrix measure on the torus for which the nonsymmetric vector-valued Jack polynomials (henceforth NSJP's) form an orthogonal set. The proof depends on a matrix-version of Bochner's theorem about the relation between positive measures on a compact abelian group and positive-def\/inite functions on the dual group, which is a discrete abelian group. In the present situation the torus is the compact (multiplicative) group and the dual is $\mathbb{Z}^{N}$. By using known properties of the NSJP's we produced a positive-def\/inite matrix function on $\mathbb{Z}^{N}$ and this implied the existence of the desired orthogonality measure. Additionally we showed that the part of the measure supported by $\mathbb{T}_{\rm reg}^{N}:=\mathbb{T}^{N}\backslash\bigcup_{i<j} \{ x\colon x_{i}=x_{j} \} $ is absolutely continuous with respect to the Haar measure $\mathrm{d}m$ and satisf\/ies a f\/irst-order dif\/ferential system. In this paper we complete the description of the measure by proving there is no singular part. The idea is to use the functional equations satisf\/ied by the inner product to establish a~correspondence to the dif\/ferential system. The main reason for the argument being so complicated is that the ``obvious'' integration-by-parts argument which works smoothly for the scalar case with $\kappa>1$ has great dif\/f\/iculty with the singularities of the measure of the form $\vert x_{i}-x_{j}\vert ^{-2\vert \kappa\vert }$. We use a Cauchy principal-value argument based on a weak continuity condition across the faces $\{x\colon x_{i}=x_{j}\} $ (as an over-simplif\/ied one-dimensional example consider the integral $\int_{-1}^{1}\frac{\mathrm{d}}{\mathrm{d}x}f(x) \mathrm{d}x$ with $f(x) =\vert 2x+x^{2}\vert ^{-1/4}$: the integral is divergent but the principal value $\lim\limits_{\varepsilon\rightarrow0_{+}}\big\{ \int_{-1}^{-\varepsilon}+\int_{\varepsilon}^{1}\big\} f^{\prime}(x) \mathrm{d}x=f(1) -f(-1) +\lim\limits_{\varepsilon \rightarrow0_{+}}\{ f(-\varepsilon) -f(( \varepsilon))\} $ and $f(-\varepsilon) -f(\varepsilon) =O\big( \varepsilon^{3/4}\big) $ hence the limit exists).

The dif\/ferential system is a two-sided version of a Knizhnik--Zamolodchikov equation (see \cite{Felder/Veselov2007}) modif\/ied to have solutions
homogeneous of degree zero, that is, constant on circles $\{( ux_{1},\ldots$, $ux_{N}) \colon \vert u\vert =1\} $. The purpose of the latter condition is to allow solutions analytic on connected components of~$\mathbb{T}_{\rm reg}^{N}$. Denote the degree of $\tau$ by $n_{\tau}$. The solutions of the dif\/ferential system are locally analytic $n_{\tau}\times n_{\tau}$ matrix functions with initial condition given by a constant matrix. That is, the solution space is of dimension $n_{\tau}^{2}$ but only one solution can provide the desired weight function. Part of the analysis deals with conditions specifying this solution~-- they turn out to be commutation relations involving certain group elements. In the subsequent discussion it is shown that the weight function property holds for a very small interval of $\kappa$ values if these relations are satisf\/ied. This is combined with the existence theorem of the positive-def\/inite matrix measure to f\/inally demonstrate that the measure has no singular part for any $\kappa$ in $-1/h_{\tau}<\kappa<1/h_{\tau}$.

In a subsequent development \cite{Dunkl2017} it is shown that the square root of the matrix weight function multiplied by vector-valued symmetric Jack polynomials provides novel wavefunctions of the Calogero--Sutherland quantum mechanical model of identical particles on a circle with $1/r^{2}$ interactions.

Here is an outline of the contents of the individual sections:
\begin{itemize}\itemsep=0pt
\item Section \ref{mods}: a short description of the representation of the symmetric group associated to a partition; the def\/inition of Dunkl operators for vector-valued polynomials and the def\/inition of nonsymmetric Jack polynomials (NSJP's) as simultaneous eigenvectors of a~commutative set of operators; and the Hermitian form given by an integral over the torus, for which the NSJP's form an orthogonal basis.

\item Section \ref{difsys}: the def\/inition of the linear system of dif\/ferential equations which will be demonstrated to have a unique matrix solution $L(x) $ such that $L(x) ^{\ast}L(x) \mathrm{d}m(x) $ is the weight function for the Hermitian form; the proof that the system is Frobenius integrable and the analyticity and monodromy properties of the solutions on the torus.

\item Section \ref{byparts}: the use of the dif\/ferential equation to relate the Hermitian form to $L(x) ^{\ast}L(x) $ by means of integration by parts; the result of this is to isolate the role of the singularities in the process of proving the orthogonality of the NSJP's with respect to $L^{\ast}L\mathrm{d}m$.

\item Section \ref{locps}: deriving power series expansions of $L(x) $ near the singular set $\bigcup_{i<j}\big\{ x\in\mathbb{T}^{N}\colon$ $x_{i}=x_{j}\big\}$, in particular near the set $\{x\colon x_{N-1}=x_{N}\} $; description of commutation properties of the coef\/f\/icients with respect to the ref\/lection $(N-1,N) $; the behavior of $L$ across the mirror $\{ x\colon x_{N-1}=x_{N}\} $.

\item Section \ref{bnds}: the derivation of global bounds on $L(x)$ and local bounds on the coef\/f\/icients of the power series, needed to analyze convergence properties of the integration by parts.

\item Section \ref{suffco}: the proof of a suf\/f\/icient condition for the validity of the Hermitian form; the condition is partly that~$\kappa$ lies in a small interval around $0$ and that the boundary value of~$L(x)$ satisf\/ies a commutativity condition; the proof involves very detailed analysis of bounds on~$L$, since the local bounds have to be integrated over the entire torus.

\item Section \ref{orthmu}: further analysis of the orthogonality measure constructed in \cite{Dunkl2016}, in particular the proof of the formal dif\/ferential system satisf\/ied by the Fourier--Stieltjes (Laurent) series of the measure; this is used to show that the measure has no singular part on the open faces, such as
\begin{gather*}
\big\{ \big( e^{\mathrm{i}\theta_{1}},e^{\mathrm{i}\theta_{2}},\ldots,e^{\mathrm{i}\theta_{N-1}},e^{\mathrm{i}\theta_{N-1}}\big)\colon \theta_{1}<\theta_{2}<\cdots<\theta_{N-2}<\theta_{N-1}<\theta_{1}+2\pi\big\} ;
\end{gather*}
in turn this property is shown to imply the validity of the suf\/f\/icient condition set up in Section~\ref{suffco}.

\item Section \ref{anlcmat}: analyticity properties of the solutions of matrix equations with analytic coef\/f\/i\-cients; the results are used to extend the validity of the Hermitian form to the desired interval $-1/h_{\tau}<\kappa<1/h_{\tau}$ from the smaller interval found in Section~\ref{suffco}.
\end{itemize}

\section{Modules of the symmetric group}\label{mods}

The \textit{symmetric group} $\mathcal{S}_{N}$, the set of permutations of $\{1,2,\ldots,N\} $, acts on $\mathbb{C}^{N}$ by permutation of coordinates. For $\alpha\in\mathbb{Z}^{N}$ the norm is $\vert \alpha\vert :=\sum\limits_{i=1}^{N}\vert \alpha_{i}\vert $ and the monomial is $x^{\alpha}:= \prod\limits_{i=1}^{N} x_{i}^{\alpha_{i}}$. Denote $\mathbb{N}_{0}:=\{0,1,2,\ldots\}$. The space of polynomials $\mathcal{P}:= \operatorname{span}_{\mathbb{C}}\big\{ x^{\alpha}\colon \alpha \in\mathbb{N}_{0}^{N}\big\} $. Elements of $\operatorname{span}_{\mathbb{C}}\big\{ x^{\alpha}\colon \alpha\in\mathbb{Z}^{N}\big\} $ are called \textit{Laurent} polynomials. The action of $\mathcal{S}_{N}$ is extended to polynomials by $wp(x) =p(xw) $ where $( xw) _{i}=x_{w(i) }$ (consider $x$ as a row vector and~$w$ as a~permutation matrix, $[w] _{ij}=\delta_{i,w(j)}$, then $xw=x[w] $). This is a representation of $\mathcal{S}_{N}$, that is, $w_{1}(w_{2}p) (x) =(w_{2}p) (xw_{1}) =p(xw_{1}w_{2}) =(w_{1}w_{2}) p(x) $ for all $w_{1},w_{2}\in\mathcal{S}_{N}$.

Furthermore $\mathcal{S}_{N}$ is generated by ref\/lections in the mirrors $\{x\colon x_{i}=x_{j}\} $ for $1\leq i<j\leq N$. These are \textit{transpositions}, denoted by $(i,j)$, so that $x(i,j) $ denotes the result of interchanging~$x_{i}$ and~$x_{j}$. Def\/ine the $\mathcal{S}_{N}$-action on~$\alpha\in\mathbb{Z}^{N}$ so that $(xw) ^{\alpha}=x^{w\alpha}$
\begin{gather*}
(xw) ^{\alpha}=\prod_{i=1}^{N}x_{w(i) }^{\alpha_{i}}=\prod_{j=1}^{N}x_{j}^{\alpha_{w^{-1}(j) }},
\end{gather*}
that is $(w\alpha) _{i}=\alpha_{w^{-1}(i) }$ (take $\alpha$ as a column vector, then $w\alpha=[w] \alpha$).

The \textit{simple reflections} $s_{i}:=(i,i+1)$, $1\leq i\leq N-1$, generate $\mathcal{S}_{N}$. They are the key devices for applying inductive methods, and satisfy the \textit{braid} relations:
\begin{gather*}
s_{i}s_{j} =s_{j}s_{i},\qquad \vert i-j\vert \geq2;\\
s_{i}s_{i+1}s_{i} =s_{i+1}s_{i}s_{i+1}.
\end{gather*}

We consider the situation where the group $\mathcal{S}_{N}$ acts on the range as well as on the domain of the polynomials. We use vector spaces, called
$\mathcal{S}_{N}$-modules, on which $\mathcal{S}_{N}$ has an irreducible unitary (orthogonal) representation: $\tau\colon \mathcal{S}_{N}\rightarrow
O_{m}(\mathbb{R}) $ $\big(\tau(w) ^{-1}=\tau\big( w^{-1}\big) =\tau(w) ^{T}\big)$. See James and Kerber~\cite{James/Kerber1981} for representation theory, including a modern discussion of Young's methods.

Denote the set of \textit{partitions}
\begin{gather*}
\mathbb{N}_{0}^{N,+}:=\big\{ \lambda\in\mathbb{N}_{0}^{N}\colon \lambda_{1}\geq\lambda_{2}\geq\cdots\geq\lambda_{N}\big\} .
\end{gather*}
We identify $\tau$ with a partition of $N$ given the same label, that is $\tau\in\mathbb{N}_{0}^{N,+}$ and $\vert\tau\vert =N$. The length of~$\tau$ is $\ell(\tau) :=\max \{ i\colon \tau_{i}>0 \} $. There is a Ferrers diagram of shape~$\tau$ (also given the same label), with boxes at points $(i,j) $ with $1\leq i\leq\ell(\tau) $ and $1\leq j\leq\tau_{i}$. A \textit{tableau} of shape~$\tau$ is a~f\/illing of the boxes with numbers, and a \textit{reverse standard Young tableau} (RSYT) is a~f\/illing with the numbers $\{1,2,\ldots,N\} $ so that the entries decrease in each row and each column. We exclude the one-dimensional representations corresponding to one-row $(N) $ or one-column $(1,1,\ldots,1) $ partitions (the trivial and determinant representations, respectively). We need the important quantity $h_{\tau}:=\tau_{1}+\ell(\tau) -1$, the maximum hook-length of the diagram (the \textit{hook-length} of the node~$(i,j) \in \tau$ is def\/ined to be $\tau_{i}-j+\# \{ k\colon i<k\leq\ell(\tau) \&j\leq\tau_{k} \} +1$). Denote the set of RSYT's of shape~$\tau$ by $\mathcal{Y}(\tau)$ and let
\begin{gather*}
V_{\tau}=\operatorname{span} \{ T\colon T\in\mathcal{Y}(\tau)\}
\end{gather*}
(the f\/ield is $\mathbb{C}(\kappa) $) with orthogonal basis $\mathcal{Y}(\tau) $. For $1\leq i\leq N$ and $T\in \mathcal{Y}(\tau) $ the entry $i$ is at coordinates $( rw(i,T) ,cm(i,T)) $ and the \textit{content} is $c(i,T) :=cm(i,T) -rw(i,T) $. Each $T\in\mathcal{Y}(\tau) $ is uniquely determined by its \textit{content vector} $[ c(i,T)] _{i=1}^{N}$. Let $S_{1}(\tau) =\sum\limits_{i=1}^{N}c(i,T) $ (this sum depends only on $\tau$) and $\gamma:=S_{1}(\tau)/N$. The $\mathcal{S}_{N}$-invariant inner product on $V_{\tau}$ is def\/ined by
\begin{gather*}
\langle T,T^{\prime}\rangle _{0}:=\delta_{T,T^{\prime}}\times \prod_{\substack{1\leq i<j\leq N,\\ c(i,T) \leq c(j,T) -2}}\left( 1-\frac{1}{( c(i,T) -c(j,T)) ^{2}}\right) , \qquad T,T^{\prime}\in\mathcal{Y} (\tau) .
\end{gather*}
It is unique up to multiplication by a constant.

The \textit{Jucys--Murphy} elements $\sum\limits_{j=i+1}^{N}(i,j)$ satisfy $\sum\limits_{j=i+1}^{N}\tau((i,j)) T=c(i,T) T$ and thus the central element $\sum\limits_{1\leq i<j\leq N}(i,j) $ satisf\/ies $\sum\limits_{1\leq i<j\leq N} \tau((i,j)) T=S_{1}(\tau) T$ for each $T\in\mathcal{Y}(\tau) $. The basis is ordered such that the vectors $T$ with $c(N-1,T) =-1$ appear f\/irst (that is, $cm(N-1,T) =1$, $rw(N-1,T) =2$). This results in the matrix representation of $\tau((N-1,N)) $ being
\begin{gather*}
\left[
\begin{matrix}
-I_{m_{\tau}} & O\\
O & I_{n_{\tau}-m_{\tau}}
\end{matrix}
\right] ,
\end{gather*}
where $n_{\tau}:=\dim V_{\tau}=\#\mathcal{Y}(\tau) $ and $m_{\tau}$ is given by $\operatorname{tr}( \tau( (N-1,N))) =n_{\tau}-2m_{\tau}$. From the sum $\sum\limits_{i<j}\tau ((i,j)) =S_{1}(\tau) I$ it follows that $\binom{N}{2}\operatorname{tr}( \tau((N-1,N))) =S_{1}(\tau) n_{\tau}$ and $m_{\tau}= n_{\tau}\big( \frac{1}{2}-\frac{S_{1}(\tau) }{N(N-1) }\big)$. (The transpositions are conjugate to each other implying the traces are equal.)

\subsection{Jack polynomials}

The main concerns of this paper are measures and matrix functions on the torus associated to $\mathcal{P}_{\tau}:=\mathcal{P}\otimes V_{\tau}$, the space of
$V_{\tau}$-valued polynomials, which is equipped with the $\mathcal{S}_{N}$ action:
\begin{gather*}
w\left( x^{\alpha}\otimes T\right) =(xw) ^{\alpha}\otimes\tau(w) T, \qquad \alpha\in\mathbb{N}_{0}^{N}, \qquad T\in \mathcal{Y}(\tau) ,\\
wp(x) =\tau(w) p(xw), \qquad p\in\mathcal{P}_{\tau},
\end{gather*}
extended by linearity. There is a parameter $\kappa$ which may be generic/transcendental or complex.

\begin{Definition}
The \textit{Dunkl} and \textit{Cherednik--Dunkl} operators are ($1\leq i\leq N$, $p\in\mathcal{P}_{\tau}$)
\begin{gather*}
\mathcal{D}_{i}p(x) :=\frac{\partial}{\partial x_{i}}p(x) +\kappa\sum_{j\neq i}\tau((i,j)) \frac{p(x) -p(x(i,j))}{x_{i}-x_{j}},\\
\mathcal{U}_{i}p(x) :=\mathcal{D}_{i}( x_{i}p(x)) -\kappa\sum_{j=1}^{i-1}\tau( (i,j)) p(x(i,j)) .
\end{gather*}
\end{Definition}

The commutation relations analogous to the scalar case hold:
\begin{gather*}
\mathcal{D}_{i}\mathcal{D}_{j} =\mathcal{D}_{j}\mathcal{D}_{i}, \qquad \mathcal{U}_{i}\mathcal{U}_{j}=\mathcal{U}_{j}\mathcal{U}_{i}, \qquad 1\leq i,j\leq
N;\\
w\mathcal{D}_{i} =\mathcal{D}_{w(i) }w, \qquad \forall\, w\in \mathcal{S}_{N}; \qquad s_{j}\mathcal{U}_{i}=\mathcal{U}_{i}s_{j}, \qquad j\neq i-1,i;\\
s_{i}\mathcal{U}_{i}s_{i} =\mathcal{U}_{i+1}+\kappa s_{i}, \qquad \mathcal{U}_{i}s_{i}=s_{i}\mathcal{U}_{i+1}+\kappa, \qquad \mathcal{U}_{i+1}s_{i}=s_{i}\mathcal{U}_{i}-\kappa.
\end{gather*}
The simultaneous eigenfunctions of $\{\mathcal{U}_{i}\} $ are called (vector-valued) nonsymmetric Jack polynomials (NSJP). For generic~$\kappa$ these eigenfunctions form a basis of $\mathcal{P}_{\tau}$ (this property fails for certain rational numbers outside the interval $-1/h_{\tau}<\kappa<1/h_{\tau}$). There is a partial order on $\mathbb{N}_{0}^{N}\times\mathcal{Y}(\tau) $ for which the NSJP's have a triangular expression with leading term indexed by $(\alpha,T) \in\mathbb{N}_{0}^{N}\times\mathcal{Y}(\tau) $. The polynomial with this label is denoted by $\zeta_{\alpha,T}$, homogeneous of degree $\sum\limits_{i=1}^{N}\alpha_{i}$ and satisf\/ies
\begin{gather*}
\mathcal{U}_{i}\zeta_{\alpha,T} =\left( \alpha_{i}+1+\kappa c\left(r_{\alpha}(i) ,T\right) \right) \zeta_{\alpha,T}, \qquad 1\leq i\leq N,\\
r_{\alpha}(i) :=\#\{ j\colon \alpha_{j}>\alpha_{i}\}+\# \{ j\colon 1\leq j\leq i,\alpha_{j}=\alpha_{i} \} ;
\end{gather*}
the rank function $r_{\alpha}\in\mathcal{S}_{N}$ and $r_{\alpha}=I$ if and only if $\alpha$ is a partition. The vector
\begin{gather*}
[ \alpha_{i}+1+\kappa c( r_{\alpha}(i) ,T)] _{i=1}^{N}
\end{gather*} is called the spectral vector for $(\alpha,T) $. The NSJP structure can be extended to Laurent polynomials. Let $e_{N}:=\prod\limits_{i=1}^{N}x_{i}$ and $\boldsymbol{1}:=(1,1,\ldots,1) \in\mathbb{N}_{0}^{N}$, then $r_{\alpha+m\boldsymbol{1}}=r_{\alpha}$ for any $\alpha\in\mathbb{N}_{0}^{N}$ and $m\in\mathbb{Z}$. The commutation $\mathcal{U}_{i}( e_{N}^{m}p) =e_{N}^{m}( m+\mathcal{U}_{i}) p$ for $1\leq i\leq
N$ and $p\in\mathcal{P}_{\tau}$ imply that $e_{N}^{m}\zeta_{\alpha,T}$ and $\zeta_{\alpha+m\boldsymbol{1},T}$ have the same spectral vector for any
$m\in\mathbb{N}_{0}$. They also have the same leading term (see \cite[Section~2.2]{Dunkl2016}) and hence $e_{N}^{m}\zeta_{\alpha,T}=\zeta_{\alpha
+m\boldsymbol{1},T}$ for $\alpha\in\mathbb{N}_{0}^{N}$. This fact allows the def\/inition of~$\zeta_{\alpha,T}$ for any $\alpha\in\mathbb{Z}^{N}$: let
$m=-\min_{i}\alpha_{i}$ then $\alpha+m\boldsymbol{1}\in\mathbb{N}_{0}^{N}$ and set $\zeta_{\alpha,T}:=e_{N}^{-m}\zeta_{\alpha+m\boldsymbol{1},T}$.

For a complex vector space $V$ a Hermitian form is a mapping $\langle \cdot,\cdot\rangle \colon V\otimes V\rightarrow\mathbb{C}$ such that $\langle u,cv\rangle =c\langle u,v\rangle $, $\langle u,v_{1}+v_{2}\rangle =\langle u,v_{1}\rangle+\langle u,v_{2}\rangle $ and $\langle u,v\rangle
=\overline{\langle v,u\rangle }$ for $u,v_{1},v_{2}\in V$, $c\in\mathbb{C}$. The form is positive semidef\/inite if $\langle u,u\rangle \geq0$ for all $u\in V$. The concern of this paper is with a~particular Hermitian form on $\mathcal{P}_{\tau}$ which has the properties (for all $f,g\in\mathcal{P}_{\tau},T,T^{\prime}\in\mathcal{Y}(\tau)$, $w\in\mathcal{S}_{N}$, $1\leq i\leq N$):
\begin{gather}
\langle 1\otimes T,1\otimes T^{\prime} \rangle =\langle T,T^{\prime} \rangle _{0},\label{admforms}\\
\langle wf,wg \rangle =\langle f,g\rangle,\nonumber\\
\langle x_{i}\mathcal{D}_{i}f,g\rangle =\langle f,x_{i}\mathcal{D}_{i}g\rangle ,\nonumber\\
\langle x_{i}f,x_{i}g\rangle =\langle f,g\rangle.\nonumber
\end{gather}
The commutation $\mathcal{U}_{i}=\mathcal{D}_{i}x_{i}-\kappa\sum\limits_{j<i}(i,j) =x_{i}\mathcal{D}_{i}+1+\kappa\sum\limits_{j>i}(i,j) $ together with $\langle (i,j) f,g\rangle =\langle f,(i,j)g\rangle $ show that $\langle \mathcal{U}_{i}f,g\rangle= \langle f,\mathcal{U}_{i}g\rangle $ for all~$i$. Thus uniqueness of the spectral vectors (for all but a certain set of rational~$\kappa$ values) implies that $\langle \zeta_{\alpha,T},\zeta_{\beta,T^{\prime}
}\rangle =0$ whenever $(\alpha,T) \neq(\beta,T^{\prime}) $. In particular polynomials homogeneous of dif\/ferent degrees are mutually orthogonal, by the basis property of $\{ \zeta_{\alpha,T}\} $. For this particular Hermitian form, multiplication by any $x_{i}$ is an isometry for all $1\leq i\leq N$. This
involves an integral over the torus. The equations~(\ref{admforms}) determine the form uniquely (up to a multiplicative constant if the f\/irst condition is removed).

Denote $\mathbb{C}_{\times}:=\mathbb{C}\backslash\{0\} $ and $\mathbb{C}_{\rm reg}^{N}:=\mathbb{C}_{\times}^{N}\backslash\bigcup\limits_{i<j}\{x\colon x_{i}=x_{j}\}$. The torus is a compact multiplicative abelian group. The notations for the torus and its Haar measure in terms of
polar coordinates are%
\begin{gather*}
\mathbb{T}^{N} :=\big\{ x\in\mathbb{C}^{N}\colon \vert x_{j} \vert =1,\, 1\leq j\leq N\big\} ,\\
\mathrm{d}m(x) =(2\pi) ^{-N}\mathrm{d}\theta_{1}\cdots\mathrm{d}\theta_{N}, \qquad x_{j}=\exp ( \mathrm{i}\theta _{j}) , \qquad -\pi<\theta_{j}\leq\pi, \qquad 1\leq j\leq N.
\end{gather*}

Let $\mathbb{T}_{\rm reg}^{N}:=\mathbb{T}^{N}\cap\mathbb{C}_{\rm reg}^{N}$, then $\mathbb{T}_{\rm reg}^{N}$ has $(N-1) !$ connected components and each component is homotopic to a circle (if~$x$ is in some component then so is $ux= ( ux_{1},\ldots,ux_{N} ) $ for each $u\in\mathbb{T}$).

\begin{Definition}
Let $\omega:=\exp\frac{2\pi\mathrm{i}}{N}$ and $x_{0}:=\big( 1,\omega,\ldots,\omega^{N-1}\big)$. Denote the connected component of~$\mathbb{T}_{\rm reg}^{N}$ containing $x_{0}$ by $\mathcal{C}_{0}$.
\end{Definition}

Thus $\mathcal{C}_{0}$ is the set consisting of $\big( e^{\mathrm{i}\theta_{1}},\ldots,e^{\mathrm{i}\theta_{N}}\big) $ with $\theta_{1}<\theta_{2}< \cdots<\theta_{N}<\theta_{1}+2\pi$.

In \cite{Dunkl2016} we showed that if $-1/h_{\tau}<\kappa<1/h_{\tau}$ then there exists a positive matrix-valued measure~$\mathrm{d}\mu$ on~$\mathbb{T}^{N}$ such that for $f,g\in C^{(1) }\big( \mathbb{T}^{N};V_{\tau}\big) $, $w\in\mathcal{S}_{N}$, $1\leq i\leq N$, %
\begin{gather*}
\int_{\mathbb{T}^{N}}f(x) ^{\ast}\mathrm{d}\mu(x)g(x) =\int_{\mathbb{T}^{N}}f(xw) ^{\ast}\tau(w) ^{-1}\mathrm{d}\mu(x) \tau(w) g(xw) ,\\
\int_{\mathbb{T}^{N}}( x_{i}\mathcal{D}_{i}f(x))^{\ast}\mathrm{d}\mu(x) g(x) =\int_{\mathbb{T}^{N}}f(x) ^{\ast}\mathrm{d}\mu(x)x_{i}\mathcal{D}_{i}g(x) ,\\
\int_{\mathbb{T}^{N}}(1\otimes T) ^{\ast}\mathrm{d}\mu (x) (1\otimes T) =\langle T,T\rangle_{0}, \qquad T\in\mathcal{Y}(\tau) .
\end{gather*}
We introduced the notation
\begin{gather*}
f(x) ^{\ast}\mathrm{d}\mu(x) g(x):=\sum\limits_{T,T^{\prime}\in\mathcal{Y}(\tau) }\overline {f(x) _{T}}g(x) _{T^{\prime}}\mathrm{d}\mu_{T,T^{\prime}}(x),
\end{gather*}
where $f,g\in\mathcal{P}_{\tau}$ have the components $(f_{T}),(g_{T}) $ with respect to the orthonormal basis
\begin{gather*}
\big\{ \langle T,T\rangle _{0}^{-1/2}T\colon T\in\mathcal{Y}(\tau)\big\}.
\end{gather*}
Thus the Hermitian form $\langle f,g\rangle =\int_{\mathbb{T}^{N}}f(x) ^{\ast}\mathrm{d}\mu(x) g(x) $ satisf\/ies~(\ref{admforms}). Furthermore we showed that
\begin{gather*}
\mathrm{d}\mu(x) =\mathrm{d}\mu_{s}(x) +L(x) ^{\ast}H(\mathcal{C}) L(x)\mathrm{d}m(x) ,
\end{gather*}
where the singular part $\mu_{s}$ is the restriction of $\mu$ to $\mathbb{T}^{N}\backslash\mathbb{T}_{\rm reg}^{N}$, $H(\mathcal{C}) $ is constant and positive-def\/inite on each connected component $\mathcal{C}$ of $\mathbb{T}_{\rm reg}^{N}$ and $L(x) $ is a matrix function solving a system of dif\/ferential equations. That system is the subject of this paper. In a way the main problem is to show that $\mu$ has no singular part.

\section{The dif\/ferential system}\label{difsys}

Consider the system (with $\partial_{i}:=\frac{\partial}{\partial x_{i}}$, $1\leq i\leq N$) for $n_{\tau}\times n_{\tau}$ matrix functions $L(x) $
\begin{gather}
\partial_{i}L(x) =\kappa L(x) \left\{\sum_{j\neq i}\frac{1}{x_{i}-x_{j}}\tau((i,j))-\frac{\gamma}{x_{i}}I\right\} ,\qquad 1\leq i\leq N,\label{Lsys}\\
\gamma :=\frac{S_{1}(\tau) }{N}=\frac{1}{2N}\sum_{i=1}^{\ell(\tau) }\tau_{i}(\tau_{i}-2i+1) .\nonumber
\end{gather}
The ef\/fect of the term $\frac{\gamma}{x_{i}}I$ is to make $L(x)$ homogeneous of degree zero, that is, $\sum\limits_{i=1}^{N}x_{i}\partial_{i}L(x) =0$. The dif\/ferential system is def\/ined on $\mathbb{C}_{\rm reg}^{N}$, Frobenius integrable and analytic, thus any local solution can be continued analytically to any point in $\mathbb{C}_{\rm reg}^{N}$. Dif\/ferent paths may produce dif\/ferent values; if the analytic continuation is done along a closed path then the resultant solution is a constant matrix multiple of the original solution, called the monodromy matrix, however if the closed path is contained in a simply connected subset of $\mathbb{C}_{\rm reg}^{N}$ then there is no change.

Integrability means that $\partial_{i}( \kappa L(x) A_{j}(x)) =\partial_{j}( \kappa L(x)A_{i}(x)) $ for $i\neq j$, writing the system as
$\partial_{i}L(x) =\kappa L(x) A_{i} (x) $, $1\leq i\leq N$ (where $A_{i}(x) $ is def\/ined by equation~(\ref{Lsys})). The condition becomes%
\begin{gather*}
\kappa^{2}L(x) A_{i}(x) A_{j}(x) +\kappa L(x) \partial_{i}A_{j}(x) =\kappa^{2}L(x) A_{j}(x) A_{i}(x) +\kappa L(x) \partial_{j}A_{i}(x) .
\end{gather*}
Since $\partial_{i}A_{j}(x) =\frac{\tau((i,j)) }{( x_{i}-x_{j}) ^{2}}=\partial_{j}A_{i}(x) $ it suf\/f\/ices to show that $A_{i}(x)^{\prime}:=\sum\limits_{k\neq i}\frac{\tau(( i,k)) }{x_{i}-x_{k}}$ and $A_{j}(x) ^{\prime}:=\sum\limits_{\ell\neq j}\frac{\tau((j,\ell)) }{x_{j}-x_{\ell}}$ commute with each other. The product $A_{i}(x)^{\prime}A_{j}(x) ^{\prime}$ is a sum of $-\frac{1}{(x_{i}-x_{j}) ^{2}}I$, terms of the form $\frac{\tau((i,k) ( j,\ell)) }{( x_{i}-x_{k})( x_{j}-x_{\ell}) }+\frac{\tau(( i,\ell)( j,k)) }{( x_{i}-x_{\ell})(x_{j}-x_{k}) }$ for $\{ i,k\} \cap\{ j,\ell\}
=\varnothing$, and terms involving the $3$-cycles $(i,j,k) $ and $(j,i,k) $ occurring as
\begin{gather*}
 \frac{\tau ( (i,j) (j,k)) }{(x_{i}-x_{j}) (x_{j}-x_{k}) }+\frac{\tau((i,k) (j,i)) }{(x_{i}-x_{k})
(x_{j}-x_{i}) }+\frac{\tau ( (i,k) (j,k)) }{(x_{i}-x_{k}) (x_{j}-x_{k})}\\
\qquad{} =\frac{\tau((i,j,k)) }{( x_{i}-x_{k}) (x_{j}-x_{k}) }+\frac{\tau((j,i,k)) }{(x_{i}-x_{k})( x_{j}-x_{k}) },
\end{gather*}
(because $(i,j) (j,k) =(i,k) (j,i) =(i,j,k) $ and $(i,k)(j,k) =(j,i,k) $) and the latter two terms are symmetric in~$i$,~$j$.

We consider only fundamental solutions, that is, $\det L(x)\neq0$. Recall Jacobi's identity:
\begin{gather*}
\frac{\partial}{\partial t}\det F(t) =\operatorname{tr}\left(\operatorname{adj}(F(t)) \frac{\partial}{\partial t}F(t) \right),
\end{gather*}
where $F(t) $ is a dif\/ferentiable matrix function and $\operatorname{adj}(F(t)) F(t) =\det F(t) I$, that is, $\operatorname{adj} ( F(t)) =\{ \det F(t) \} F(t) ^{-1}$ when $F(t) $ is invertible; thus
\begin{gather*}
\partial_{i}\det L(x) =\operatorname{tr}\big( \{ \det L(x)\} L(x) ^{-1}\partial_{i}L(x)
\big) =\kappa\det L(x) \operatorname{tr}A_{i}(x) .
\end{gather*}
This can be solved: from $\sum\limits_{i<j}\tau((i,j)) =S_{1}(\tau) I$ it follows that $\operatorname{tr}( \tau((i,j))) =\binom{N}{2}^{-1}S_{1}(\tau) n_{\tau}=\frac{2}{N-1}\gamma n_{\tau}$ (and $n_{\tau}=\#\mathcal{Y}(\tau) $). We obtain the system
\begin{gather*}
\partial_{i}\det L(x) =\kappa\gamma n_{\tau}\left\{ \frac {2}{N-1}\sum_{j\neq i}\frac{1}{x_{i}-x_{j}}-\frac{1}{x_{i}}\right\} \det
L(x) , \qquad 1\leq i\leq N.
\end{gather*}
By direct verif\/ication
\begin{gather*}
\det L(x) =c\prod\limits_{1\leq i<j\leq N}\left( -\frac{(x_{i}-x_{j}) ^{2}}{x_{i}x_{j}}\right) ^{\kappa\lambda/2},\qquad \lambda:=\frac{\gamma n_{\tau}}{2(N-1) }=\operatorname{tr}(\tau(( 1,2))) ,
\end{gather*}
is a local solution for any $c\in\mathbb{C}_{\times}$, if $x_{k}=e^{\mathrm{i}\theta_{k}}$, $1\leq k\leq N$ then $-\frac{( x_{i}-x_{j})^{2}}{x_{i}x_{j}}=4\sin^{2}\frac{\theta_{i}-\theta_{j}}{2}$ (with the principal branch of the power function, positive on positive reals).
This implies $\det L(x) \neq0$ for $x\in\mathbb{C}_{\rm reg}^{N}$ (and of course $\det L(x) $ is homogeneous of degree zero).

\begin{Proposition} \label{L(xw)}If $L(x) $ is a solution of \eqref{Lsys} in some connected open subset $U$ of $\mathbb{C}_{\rm reg}^{N}$ then $L(xw)\tau(w) ^{-1}$ is a solution in $Uw^{-1}$.
\end{Proposition}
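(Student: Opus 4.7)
The plan is a direct verification using the chain rule, the fact that $\tau$ is a homomorphism, and the conjugation identity $(w^{-1}(i),w^{-1}(j))=w^{-1}(i,j)w$ for transpositions. Set $\widetilde L(x):=L(xw)\tau(w)^{-1}$ and write $y:=xw$, so $y_k=x_{w(k)}$ and therefore $\partial_{x_i}y_k=\delta_{i,w(k)}$. The chain rule then gives
\begin{gather*}
\partial_{i}L(xw)=(\partial_{k}L)(xw),\qquad k:=w^{-1}(i).
\end{gather*}

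Applying the differential system \eqref{Lsys} at the point $y=xw$ to the index $k$ and substituting $y_k=x_i$, $y_j=x_{w(j)}$ yields
\begin{gather*}
\partial_{i}L(xw)=\kappa L(xw)\left\{\sum_{j\neq k}\frac{1}{x_{i}-x_{w(j)}}\tau((k,j))-\frac{\gamma}{x_{i}}I\right\}.
\end{gather*}
Re-index the sum with $j':=w(j)$; then $j=w^{-1}(j')$ runs over $\{1,\dots,N\}\setminus\{k\}$ precisely when $j'$ runs over $\{1,\dots,N\}\setminus\{i\}$. The transposition appearing in the summand becomes $(w^{-1}(i),w^{-1}(j'))=w^{-1}(i,j')w$, and since $\tau$ is a homomorphism,
\begin{gather*}
\tau((w^{-1}(i),w^{-1}(j')))=\tau(w)^{-1}\tau((i,j'))\tau(w).
\end{gather*}

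Now right-multiply by $\tau(w)^{-1}$. The terms with $\tau(w)\tau(w)^{-1}=I$ collapse, and recognizing $L(xw)\tau(w)^{-1}=\widetilde L(x)$ on the left one obtains
\begin{gather*}
\partial_{i}\widetilde L(x)=\kappa\,\widetilde L(x)\left\{\sum_{j'\neq i}\frac{1}{x_{i}-x_{j'}}\tau((i,j'))-\frac{\gamma}{x_{i}}I\right\},
\end{gather*}
which is exactly the system \eqref{Lsys} at the point $x\in Uw^{-1}$. Since $\tau(w)$ is invertible and $L$ is a fundamental solution on $U$, $\widetilde L$ is a fundamental solution on $Uw^{-1}$.

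There is no real obstacle here beyond bookkeeping; the only point that could trip one up is making sure the re-indexing $j\mapsto w(j)$ is paired with the conjugation identity on transpositions so that the right-multiplication by $\tau(w)^{-1}$ cleanly converts the sum back to its original form.
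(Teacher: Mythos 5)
Your proof is correct. The only delicate points are the chain-rule identity $\partial_i[L(xw)]=(\partial_{w^{-1}(i)}L)(xw)$ under the convention $(xw)_k=x_{w(k)}$, and pairing the re-indexing $j\mapsto w(j)$ with the conjugation identity $(w^{-1}(i),w^{-1}(j'))=w^{-1}(i,j')w$ so that right-multiplication by $\tau(w)^{-1}$ restores the system; you handle both correctly, and your formula specializes exactly to the paper's computations in the transposition cases (e.g.\ for $w=(i,j)$ it gives $\partial_i[L(x(i,j))]=(\partial_jL)(x(i,j))$).

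The route differs from the paper's in structure, though not in substance. The paper verifies the claim only for transpositions $w=(j,k)$, splitting into the cases $i\notin\{j,k\}$ and $i\in\{j,k\}$ and using the specific identities $(i,j)(j,k)=(j,k)(i,k)$ and $(j,\ell)(i,j)=(i,j)(i,\ell)$, and then extends to arbitrary $w$ by induction on a word in transpositions, via $L(x(j,k)w)\tau((j,k)w)^{-1}$. You instead treat an arbitrary $w$ in a single uniform computation, replacing the case analysis and the induction by the general conjugation relation $w(i,j)w^{-1}=(w(i),w(j))$ (which the paper itself records later). Your argument is shorter and avoids the generator-by-generator bookkeeping; the paper's version only needs the elementary transposition identities and matches the inductive style it reuses elsewhere (e.g.\ for $\tau(w)L^{\ast}(xw)$). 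Your closing remark about fundamental solutions is harmless but not needed: the statement concerns solutions, and invertibility of $\tau(w)$ immediately gives the fundamental-solution refinement when $\det L\neq0$ on $U$.
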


\begin{proof} First let $w=(j,k) $ for some f\/ixed $j$, $k$. If $i\neq j,k$ then replace $x$ by $x(j,k) $ in $\partial_{i}L$ to obtain%
\begin{gather*}
 \partial_{i}L(x(j,k)) \tau(( j,k)) =\kappa L(x(j,k)) \left\{ \sum_{\ell\neq
i,j,k}\frac{\tau((i,\ell)) }{x_{i}-x_{\ell}}+\frac{\tau((i,j)) }{x_{i}-x_{k}}+\frac
{\tau((i,k)) }{x_{i}-x_{j}}-\frac{\gamma}{x_{i}}I\right\} \tau(j,k) \\
\hphantom{\partial_{i}L(x(j,k)) \tau(( j,k))}{} =\kappa L(x(j,k)) \tau(j,k)
\left\{ \sum_{\ell\neq i,j,k}\frac{\tau ( (i,\ell)) }{x_{i}-x_{\ell}}+\frac{\tau((i,k))
}{x_{i}-x_{k}}+\frac{\tau((i,j)) }{x_{i}-x_{j}}-\frac{\gamma}{x_{i}}I\right\} ,
\end{gather*}
because $(i,j) (j,k) =(j,k) (i,k) $. Next let $w=(i,j) $, then $\partial_{i}[L(x(i,j))] =(\partial_{j}L)(x(i,j)) $ and
\begin{gather*}
\partial_{i}[ L(x(i,j)) \tau((i,j))] =\kappa L( x(i,j)) \left\{ \sum_{\ell\neq i,j}\frac{\tau ( (j,\ell)) }{x_{i}-x_{\ell}}+\frac{\tau((i,j))
}{x_{i}-x_{j}}-\frac{\gamma}{x_{i}}I\right\} \tau( (i,j)) \\
 \hphantom{\partial_{i}[ L(x(i,j)) \tau((i,j))]}{} =\kappa L(x(i,j)) \tau( (i,j)) \left\{ \sum_{\ell\neq i,j}\frac{\tau((i,\ell)) }{x_{i}-x_{\ell}}+\frac{\tau((i,j)) }{x_{i}-x_{j}}-\frac{\gamma}{x_{i}}I\right\} ,
\end{gather*}
by use of $(j,\ell) (i,j) =(i,j)(i,\ell) $. Arguing by induction suppose $L(xw) \tau(w) ^{-1}$ is a solution then so is $L( x(j,k) w) \tau(w) ^{-1}\tau((j,k)) =L( x(j,k) w) \tau((j,k) w) ^{-1}$, for any$(j,k) $, that is, the statement holds for $w^{\prime}=(j,k) w$.
\end{proof}

Let $w_{0}:= ( 1,2,3,\ldots, N ) = ( 12 )(23)\cdots(N-1,N) $ denote the $N$-cycle and let $\langle w_{0}\rangle $ denote the cyclic group generated by~$w_{0}$. There are two components of $\mathbb{T}_{\rm reg}^{N}$ which are set-wise invariant under $\langle w_{0}\rangle $ namely $\mathcal{C}_{0}$
and the reverse $\{ \theta_{N}<\theta_{N-1}<\cdots<\theta_{1}<\theta _{N}+2\pi\} $. Indeed $\langle w_{0}\rangle $ is the stabilizer of $\mathcal{C}_{0}$ as a subgroup of $\mathcal{S}_{N}$.

Henceforth we use $L(x) $ to denote the solution of (\ref{Lsys}) in $\mathcal{C}_{0}$ which satisf\/ies $L(x_{0}) =I$.

\begin{Proposition} Suppose $x\in\mathcal{C}_{0}$ and $m\in\mathbb{Z}$ then $L( xw_{0}^{m}) =\tau(w_{0}) ^{-m}L(x) \tau(w_{0}) ^{m}$.
\end{Proposition}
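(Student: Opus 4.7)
The plan is to compare two solutions of the system (\ref{Lsys}) on $\mathcal{C}_{0}$: the canonical one $L(x)$ and a translate constructed from it. By Proposition~\ref{L(xw)} applied with $w=w_{0}^{m}$, the matrix function
\begin{gather*}
M(x):=L\big(xw_{0}^{m}\big)\tau(w_{0})^{-m}
\end{gather*}
is a solution of (\ref{Lsys}) on $\mathcal{C}_{0}w_{0}^{-m}$. Since $\langle w_{0}\rangle$ stabilises $\mathcal{C}_{0}$, this domain is again $\mathcal{C}_{0}$, so $M$ and $L$ are two fundamental solutions on the same connected open set.

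Next I would invoke the standard fact that, because the equation has the shape $\partial_{i}L=\kappa L A_{i}(x)$ with $L$ appearing on the left, the set of solutions on a connected open set is closed under left multiplication by constant matrices, and any two fundamental solutions differ by such a constant factor. Hence $M(x)=C\,L(x)$ for some constant $n_{\tau}\times n_{\tau}$ matrix~$C$, and it remains to identify $C$ by evaluation at the base point $x_{0}$.

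The key computation is that $x_{0}w_{0}=\omega\,x_{0}$. Indeed, using the convention $(xw)_{i}=x_{w(i)}$ with $w_{0}=(1,2,\ldots,N)$, one has $(x_{0}w_{0})_{i}=\omega^{i}$ for $1\le i\le N-1$ and $(x_{0}w_{0})_{N}=1=\omega^{N}$, which is exactly $\omega\cdot x_{0}$. Iterating gives $x_{0}w_{0}^{m}=\omega^{m}x_{0}$. Because $L$ is homogeneous of degree zero, $L\big(x_{0}w_{0}^{m}\big)=L(\omega^{m}x_{0})=L(x_{0})=I$. Therefore $M(x_{0})=\tau(w_{0})^{-m}=C\,L(x_{0})=C$, and substituting $C=\tau(w_{0})^{-m}$ back into $M(x)=CL(x)$ yields $L\big(xw_{0}^{m}\big)=\tau(w_{0})^{-m}L(x)\tau(w_{0})^{m}$, as claimed.

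There is no real obstacle here; the only thing one must verify carefully is the small bookkeeping check $x_{0}w_{0}=\omega x_{0}$, which together with degree-zero homogeneity reduces the initial-value comparison to the trivial identity $L(x_{0})=I$. The induction in Proposition~\ref{L(xw)} has already done the work of handling general $w$, so the cyclic case $w=w_{0}^{m}$ requires no further commutation computation.
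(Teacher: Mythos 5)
Your proposal is correct and follows essentially the same argument as the paper: use Proposition \ref{L(xw)} to produce a second solution on $\mathcal{C}_{0}$, conclude it is a constant left multiple of $L$, and identify the constant at $x_{0}$ via $x_{0}w_{0}=\omega x_{0}$ and degree-zero homogeneity. The only cosmetic difference is that you treat general $m$ directly where the paper does $m=1$ and then iterates.
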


\begin{proof}Consider the solution $L(xw_{0}) \tau(w_{0}) ^{-1}$ which agrees with $\Xi L(x) $ for all $x\in \mathcal{C}_{0}$ for some f\/ixed matrix $\Xi$. In particular for $x=x_{0}$ where $x_{0}w_{0}=\big( \omega,\ldots,\omega^{N-1},1\big) =\omega x_{0}$ (recall $(xw) _{i}=x_{w(i) }$) we obtain $\Xi L(x_{0}) =L ( x_{0}w_{0} ) \tau(w_{0})
^{-1}=L(\omega x_{0}) \tau(w_{0}) ^{-1}=L(x_{0}) \tau(w_{0}) ^{-1}$; because $L(x) $ is homogeneous of degree zero. Thus $\Xi=\tau(w_{0}) ^{-1}$. Repeated use of the relation shows $L( xw_{0}^{m}) =\tau(w_{0}) ^{-m}L(x) \tau(w_{0}) ^{m}$.
\end{proof}

Because of its frequent use denote $\upsilon:=\tau(w_{0}) $ (the letter $\upsilon$ occurs in the Greek word \textit{cycle}).

\begin{Definition}
For $w\in\mathcal{S}_{N}$ set $\nu(w) :=\upsilon^{1-w(1) }$. For any $x\in\mathbb{T}_{\rm reg}^{N}$ there is a unique $w_{x}$ such that $w_{x}(1) =1$ and $xw_{x}^{-1}\in\mathcal{C}_{0}$. Set $M(w,x) :=\nu(w_{x}w) $.
\end{Definition}

As a consequence $\nu ( w_{0}^{m}w ) =\upsilon^{-m}\nu(w) $ for any $w\in\mathcal{S}_{N}$ and $m\in\mathbb{Z}$; since
$w_{0}^{m}w(1) -1=(w(1) +m-1) \operatorname{mod}N$. Also $M(I,x) =I$. There is a 1-1 correspondence $w\mapsto\mathcal{C}_{0}w$ between $ \{ w\in \mathcal{S}_{N}\colon w(1) =1 \} $ and the connected components
of $\mathbb{T}_{\rm reg}^{N}$.

\begin{Proposition}
For any $w_{1},w_{2}\in\mathcal{S}_{N}$ and $x\in\mathbb{T}_{\rm reg}^{N}$%
\begin{gather*}
M( w_{1}w_{2},x) =M( w_{2},xw_{1}) M (w_{1},x) .
\end{gather*}
\end{Proposition}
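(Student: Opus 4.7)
The plan is to reduce the asserted cocycle identity to the relation $\nu\bigl(w_0^{\,m} w\bigr) = \upsilon^{-m}\nu(w)$ stated just above, exploiting the fact that $\langle w_0\rangle$ is the setwise stabilizer of $\mathcal{C}_0$. The crucial intermediate fact I will need is that $w_x w_1$ and $w_{xw_1}$ differ on the left by a power of $w_0$.

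My first step is to prove $w_x w_1 = w_0^{\,k}\, w_{xw_1}$ for some $k\in\mathbb{Z}$. Since $xw_x^{-1}\in\mathcal{C}_0$, the connected component of $x$ equals $\mathcal{C}_0 w_x$. On the other hand, from $(xw_1) w_{xw_1}^{-1}\in\mathcal{C}_0$ one gets $x\in\mathcal{C}_0\, w_{xw_1} w_1^{-1}$, i.e.\ that same component also equals $\mathcal{C}_0\, w_{xw_1} w_1^{-1}$. Equating the two descriptions forces $w_x w_1 w_{xw_1}^{-1}$ to stabilize $\mathcal{C}_0$, and the description of this stabilizer as $\langle w_0\rangle$ yields the claimed factorization.

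The second step is then a direct substitution. Writing $w_x w_1 w_2 = w_0^{\,k}\, w_{xw_1} w_2$ and using the quoted relation gives $M(w_1 w_2, x) = \nu(w_x w_1 w_2) = \upsilon^{-k}\,\nu(w_{xw_1} w_2) = \upsilon^{-k}\,M(w_2, xw_1)$. Likewise, $w_x w_1 = w_0^{\,k}\, w_{xw_1}$ together with $w_{xw_1}(1) = 1$ (so $\nu(w_{xw_1}) = \upsilon^{0} = I$) collapses to $M(w_1, x) = \upsilon^{-k}$. Since both $M(w_2, xw_1)$ and $\upsilon^{-k}$ are powers of $\upsilon$, they commute, and the two expressions combine to give the desired identity.

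The main potential obstacle is precisely the first step: recognizing the group-theoretic cocycle $w_x w_1 w_{xw_1}^{-1}$ as an element of $\langle w_0\rangle$. Once that identification is in place, the remainder is purely mechanical bookkeeping with the already-established formula $\nu(w_0^{\,m} w) = \upsilon^{-m}\nu(w)$, and no analytic input is required.
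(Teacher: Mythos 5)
Your proof is correct and follows essentially the same route as the paper: both identify $w_xw_1w_{xw_1}^{-1}$ as an element of $\langle w_0\rangle$ via the stabilizer of $\mathcal{C}_0$ and then finish with the relation $\nu(w_0^m w)=\upsilon^{-m}\nu(w)$. The only cosmetic difference is that the paper pins the power down explicitly as $m=w_xw_1(1)-1$, whereas you deduce $\upsilon^{-k}=M(w_1,x)$ from $\nu(w_{xw_1})=I$; the substance is identical.
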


\begin{proof}
By def\/inition $M(w_{1}w_{2},x) =\nu(w_{x}w_{1}w_{2}) $ and $M(w_{1},x) =\nu(w_{x}w_{1})
=\upsilon^{-m}$ where $w_{x}w_{1}(1) =m+1$. Let $w_{3}=w_{xw_{1}}$, that is, $w_{3}(1) =1$ and $xw_{1}w_{3}^{-1}\in\mathcal{C}_{0}$. From $\big( xw_{x}^{-1}\big) \big( w_{x}w_{1}w_{3}^{-1}\big)$ $\in\mathcal{C}_{0}$ it follows that $w_{x}w_{1}w_{3}^{-1}\in\langle w_{0}\rangle $, in particular $w_{x}w_{1}w_{3}^{-1}=w_{0}^{m}$ because $w_{x}w_{1}w_{3}^{-1}(1) =w_{x}w_{1}(1) =m+1=w_{0}^{m}(1) $. Thus $M(w_{2},xw_{1}) =\nu( w_{3}w_{2}) =\nu\big( w_{0}^{-m}w_{x}w_{1}w_{2}\big) =\upsilon^{m}\nu(w_{x}w_{1}w_{2})$, and $\upsilon^{m}=M(w_{1},x) ^{-1}$. This completes the proof.
\end{proof}

\begin{Corollary}
Suppose $w\in\mathcal{S}_{N}$ and $x\in\mathbb{T}_{\rm reg}^{N}$ then $M\big(w^{-1},xw\big) =M(w,x) ^{-1}$.
\end{Corollary}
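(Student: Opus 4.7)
The statement is an immediate corollary of the preceding Proposition, so the plan is simply to specialize the cocycle identity
\begin{gather*}
M(w_{1}w_{2},x) = M(w_{2},xw_{1}) M(w_{1},x)
\end{gather*}
to the choice $w_{1}=w$, $w_{2}=w^{-1}$. The only fact about $M$ at the identity that I need is $M(I,x)=I$, which was recorded just before the Proposition (since $w_{x}\in\{w\colon w(1)=1\}$ and $\nu$ of any element fixing $1$ equals $\upsilon^{0}=I$).

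So the proof is essentially one line: substituting $w_{1}=w$ and $w_{2}=w^{-1}$ into the cocycle identity gives
\begin{gather*}
I = M(I,x) = M\bigl(w w^{-1},x\bigr) = M\bigl(w^{-1},xw\bigr) M(w,x),
\end{gather*}
and then left-multiplying by $M(w,x)^{-1}$ (which exists because $M(w,x)$ is a power of $\upsilon=\tau(w_{0})$, hence invertible) yields the claim. There is no real obstacle; the only thing to mention is why $M(w,x)$ is invertible, which follows from $\upsilon$ being a matrix of a finite-order group element in the orthogonal representation $\tau$, so $\upsilon^{-1}=\upsilon^{N-1}$ exists and $M(w,x)^{-1}=\nu(w_{x}w)^{-1}$ is again a power of $\upsilon$. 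I would not bother reproving invertibility beyond a brief aside.

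In short, I would present this as a two-sentence proof: invoke the Proposition with $w_{2}=w_{1}^{-1}$, use $M(I,x)=I$, and rearrange. No new ideas are needed beyond what the Proposition provides.
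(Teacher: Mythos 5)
Your proof is correct and is essentially identical to the paper's: both specialize the cocycle identity to $w_{1}=w$, $w_{2}=w^{-1}$ and use $M(I,x)=I$. The added remark on invertibility of $M(w,x)$ (a power of $\upsilon$) is harmless but not needed beyond what the paper states.
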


\begin{proof}
Indeed $M\left( w^{-1},xw\right) M(w,x) =M\big(ww^{-1},x\big) =I$.
\end{proof}

We can now extend~$L(x) $ to all of $\mathbb{T}_{\rm reg}^{N}$ from its values on $\mathcal{C}_{0}$.

\begin{Definition}\label{DefL(x)T}For $x\in\mathbb{T}_{\rm reg}^{N}$ let%
\begin{gather*}
L(x) :=L\big( xw_{x}^{-1}\big) \tau(w_{x}) .
\end{gather*}
\end{Definition}

\begin{Proposition}\label{L(xw)M}For any $x\in\mathbb{T}_{\rm reg}^{N}$ and $w\in\mathcal{S}_{N}$
\begin{gather*}
L(xw) =M(w,x) L(x) \tau (w) .
\end{gather*}
\end{Proposition}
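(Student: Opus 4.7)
The plan is to unwind Definition \ref{DefL(x)T} on both sides and reduce the identity to the $\langle w_0\rangle$-equivariance formula $L(yw_0^m)=\upsilon^{-m}L(y)\upsilon^m$ already established on $\mathcal{C}_0$. Setting $y:=xw_x^{-1}\in\mathcal{C}_0$, we have $x=yw_x$ and $xw=y(w_xw)$, and the definition gives $L(x)=L(y)\tau(w_x)$ and $L(xw)=L((xw)w_{xw}^{-1})\tau(w_{xw})$, so the task is to identify $w_{xw}$ and the element $(xw)w_{xw}^{-1}\in\mathcal{C}_0$ in terms of known data.

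For this I would use the fact (stated just after the definition of $\mathcal{C}_0$) that $\langle w_0\rangle$ is precisely the stabilizer of $\mathcal{C}_0$ in $\mathcal{S}_N$. Since $y w_x w\, w_{xw}^{-1}\in\mathcal{C}_0$ and $y\in\mathcal{C}_0$, the group element $w_xw\,w_{xw}^{-1}$ must lie in $\langle w_0\rangle$; writing it as $w_0^m$ and applying both sides to $1$, the normalization $w_{xw}(1)=1$ forces $m=w_xw(1)-1$, so that $w_{xw}=w_0^{-m}w_xw$ and $(xw)w_{xw}^{-1}=yw_0^m$. This is the only point where one must think: everything after it is substitution.

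Once $m$ is pinned down, I would plug in: invoke the $\langle w_0\rangle$-equivariance to write $L(yw_0^m)=\upsilon^{-m}L(y)\upsilon^m$, substitute $L(y)=L(x)\tau(w_x)^{-1}$, and use $\tau(w_{xw})=\upsilon^{-m}\tau(w_x)\tau(w)$; the middle pair $\upsilon^m\tau(w_x)^{-1}\cdot\upsilon^{-m}\tau(w_x)=I$ collapses, leaving $L(xw)=\upsilon^{-m}L(x)\tau(w)$. Finally, by the definition of $\nu$ and $M$, $M(w,x)=\nu(w_xw)=\upsilon^{1-w_xw(1)}=\upsilon^{-m}$, which matches exactly. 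The main (really only) obstacle is the bookkeeping in the second step — confirming that $w_x w\,w_{xw}^{-1}\in\langle w_0\rangle$ and matching the exponent to the definition of $M$; there is no analytic input beyond the previously proved homogeneity/monodromy identity, and the proposition is essentially a cocycle identity repackaging $M$ as the obstruction to right-equivariance of $L$.
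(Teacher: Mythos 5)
Your proof is correct and is essentially the paper's own argument: both identify $w_{xw}=w_{0}^{-m}w_{x}w$ with $m=w_{x}w(1)-1$ via the fact that $\langle w_{0}\rangle$ stabilizes $\mathcal{C}_{0}$, then apply the previously proved relation $L(yw_{0}^{m})=\upsilon^{-m}L(y)\upsilon^{m}$ for $y=xw_{x}^{-1}\in\mathcal{C}_{0}$ and unwind Definition~\ref{DefL(x)T}, ending with $M(w,x)=\nu(w_{x}w)=\upsilon^{-m}$. One trivial transcription slip: the factor that collapses is $\tau(w_{x})^{-1}\upsilon^{m}\cdot\upsilon^{-m}\tau(w_{x})=I$, not $\upsilon^{m}\tau(w_{x})^{-1}\cdot\upsilon^{-m}\tau(w_{x})$, since $\upsilon^{m}$ need not commute with $\tau(w_{x})$; the computation as set up is otherwise exactly right.
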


\begin{proof}
Let $w_{1}=w_{xw}$, that is, $w_{1}(1) =1$ and $xww_{1}^{-1}\in\mathcal{C}_{0}$, then by def\/inition $L(xw) =L\big(xww_{1}^{-1}\big) \tau(w_{1}) $ and $L\big( xw_{x}^{-1}\big) =L(x) \tau(w_{x}) ^{-1}$. Let $m=w_{x}w(1) -1$. Since $w_{x}ww_{1}^{-1}$ f\/ixes $\mathcal{C}_{0}$ and $w_{x}ww_{1}^{-1}(1) =w_{x}w(1) =m+1$ it follows that $w_{x}ww_{1}^{-1}=w_{0}^{m}$. Thus $w_{1}=w_{0}^{-m}w_{x}w$,
\begin{gather*}
L\big( xww_{1}^{-1}\big) \tau(w_{1}) =L\big(xw_{x}^{-1}w_{0}^{m}\big) \tau\big( w_{0}^{-m}w_{x}w\big)
=\upsilon^{-m}L\big( xw_{x}^{-1}\big) \upsilon^{m}\tau\big( w_{0}^{-m}w_{x}w\big) \\
\hphantom{L\big( xww_{1}^{-1}\big) \tau(w_{1})}{} =\upsilon^{-m}L\big( xw_{x}^{-1}\big) \tau(w_{x}w) =\upsilon^{-m}L(x) \tau(w)
\end{gather*}
and $M(w,x) =\nu(w_{x}w) =\upsilon^{-m}$.
\end{proof}

\subsection[The adjoint operation on Laurent polynomials and $L(x)$]{The adjoint operation on Laurent polynomials and $\boldsymbol{L(x)}$}

The purpose is to def\/ine an operation which agrees with taking complex conjugates of functions and Hermitian adjoints of matrix functions when restricted to $\mathbb{T}^{N}$, and which preserves analyticity. The parameter $\kappa$ is treated as real in this context even where it may be complex (to preserve analyticity in $\kappa$). For $x\in\mathbb{C}_{\times}^{N}$ def\/ine $\phi x:=\big( x_{1}^{-1},x_{2}^{-1},\ldots,x_{N}^{-1}\big) $, then
$\phi(xw) =(\phi x) w$ for all $w\in \mathcal{S}_{N}$.

\begin{Definition}\label{defadj}\quad
\begin{enumerate}\itemsep=0pt
\item[(1)] If $f(x) =\sum\limits_{\alpha\in\mathbb{Z}^{N}}c_{\alpha}x^{\alpha}$ is a Laurent polynomial then $f^{\ast}(x) :=\sum\limits_{\alpha\in\mathbb{Z}^{N}}\overline{c_{\alpha}}x^{-\alpha}$.
\item[(2)] If $f(x) =\sum\limits_{\alpha\in \mathbb{Z}^{N}}A_{\alpha}x^{\alpha}$ is a Laurent polynomial with matrix coef\/f\/icients then $f^{\ast}(x) :=\sum\limits_{\alpha \in\mathbb{Z}^{N}}A_{\alpha}^{\ast}x^{-\alpha}$.
\item[(3)] if $F(x) $ is a matrix-valued function analytic in an open subset $U$ of $\mathbb{C}_{\times}^{N}$ then $F^{\ast}(x) :=\overline{( F(\overline{\phi x})) }^{T}\!$ and $F^{\ast}$ is analytic on $\phi U$, that is, if $F(x)\! =\![ F_{ij}(x) ]_{i,j=1}^{N}$ then $F^{\ast}(x)\! =\! [ \overline{F_{ji}( \overline{\phi x}) }] _{i,j=1}^{N}\!$ (for example if $F_{12}(x) =c_{1}\kappa x_{1}x_{3}^{-1}+c_{2}x_{2}^{2}x_{3}^{-1}x_{4}^{-1}$ then $F_{21}^{\ast}(x) =\overline{c_{1}}\kappa x_{1}^{-1}x_{3}+\overline{c_{2}}x_{2}^{-2}x_{3}x_{4}$).
\end{enumerate}
\end{Definition}

Loosely speaking $F^{\ast}(x) $ is obtained by replacing $x$ by $\phi x$, conjugating the complex constants and transposing. The fundamental chamber $\mathcal{C}_{0}$ is mapped by $\phi$ onto $\big\{ \big( e^{\mathrm{i}\theta_{j}}\big) _{j=1}^{N}\colon \theta_{1}>\theta_{2}>\cdots$ $>\theta_{N}>\theta_{1}-2\pi\big\} $, again set-wise invariant under~$w_{0}$. Using $\frac{\mathrm{d}}{\mathrm{d}t}\big\{ f\big( \frac{1}{t}\big) \big\}
=-\frac{1}{t^{2}}\big( \frac{d}{dt}f\big) \big( \frac{1}{t}\big)$ we obtain the system
\begin{gather*}
\partial_{i}L(\phi x) =\kappa L(\phi x) \left\{\sum_{j\neq i}\frac{x_{j}}{x_{i}}\frac{\tau ( (i,j)) }{x_{i}-x_{j}}+\frac{\gamma}{x_{i}}\right\} , \qquad 1\leq i\leq N.
\end{gather*}
Transposing this system leads to (note $\tau(w) ^{T}=\tau (w) ^{\ast}=\tau\big( w^{-1}\big) $)
\begin{gather*}
\partial_{i}L(\phi x) ^{T}=\kappa\left\{ \sum_{j\neq i}\frac{x_{j}}{x_{i}}\frac{\tau((i,j)) }
{x_{i}-x_{j}}+\frac{\gamma}{x_{i}}\right\} L(\phi x)^{T}, \qquad 1\leq i\leq N.
\end{gather*}
Now use part (3) of Def\/inition \ref{defadj} and set up the system whose solution of
\begin{gather}
\partial_{i}L^{\ast}(x) =\kappa\left\{ \sum_{j\neq i}\frac{x_{j}}{x_{i}}\frac{\tau((i,j)) }{x_{i}-x_{j}}+\frac{\gamma}{x_{i}}\right\} L^{\ast}(x) , \qquad 1\leq
i\leq N.\label{L*sys}
\end{gather}
satisfying $L^{\ast}(x_{0}) =I$ is denoted by $L^{\ast}(x)$. The constants in the system are all real so replacing complex constants by their complex conjugates preserves solutions of the system. The ef\/fect is that $L(x) ^{\ast}$ agrees with the Hermitian adjoint of $L(x) $ for $x\in\mathcal{C}_{0}$ (for real $\kappa$). The goal here is to establish conditions on a constant Hermitian matrix $H$ so that $K(x) :=L^{\ast}(x) HL(x) $ has desirable properties, such as $K(xw) =\tau(w)^{-1}K(x) \tau(w) $ and $K(x) \geq0$ (i.e., positive def\/inite).

Similarly to the above $\tau((i,j)) L^{\ast}(x(i,j)) $ is also a solution of (\ref{L*sys}), implying that $\tau(w) L^{\ast}(xw) $ is a~solution for any $w\in\mathcal{S}_{N}$, the inductive step is
\begin{gather*}
\tau((i,j)) \tau(w) L( x(i,j) w) ^{\ast}=\tau((i,j)w) L(x(i,j)w) ^{\ast}.
\end{gather*} Also $L^{\ast}(x_{0}w_{0}) =L^{\ast}\big( \omega^{-1}x_{0}\big) =L^{\ast} (x_{0}) =I$ (thus there is a matrix $\widetilde{\Xi}$ such that
$\tau(w_{0}) L^{\ast}(xw_{0}) =L^{\ast} ( \phi x) \widetilde{\Xi}$ for all $x\in\mathcal{C}_{0}$, and $\widetilde{\Xi}=\tau(w_{0}) =\upsilon$. In analogy to~$L$ for $x\in\mathbb{T}_{\rm reg}^{N}$ and the same $w_{x}$ as above let $L ( \phi x_{0}) ^{T}=I$, $L(\phi x) ^{T}:=\tau ( w_{x}) ^{-1}L\big( \phi xw_{x}^{-1} \big) ^{T}$ (since $\phi xw_{x}^{-1}\in\phi\mathcal{C}_{0})$. Then $L(\phi xw) ^{T}=\tau(w) ^{-1}L(\phi x) ^{T}M(w,x)^{-1}$.

For any nonsingular constant matrix $C$ the function $CL(x) $ also satisf\/ies (\ref{Lsys}) and the function $K(x) :=L^{\ast}(x) C^{\ast}CL(x) $ satisf\/ies the system
\begin{gather}
x_{i}\partial_{i}K(x) =\kappa\sum_{j\neq i}\left\{ \frac{x_{j}}{x_{i}-x_{j}}\tau((i,j)) K(x) +K(x) \tau((i,j)) \frac{x_{i}
}{x_{i}-x_{j}}\right\} , \qquad 1\leq i\leq N.\label{Kdieq}
\end{gather}
This formulation can be slightly generalized by replacing $C^{\ast}C$ by a~Hermitian matrix~$H$ (not necessarily positive-def\/inite) without changing the equation.

For the purpose of realizing the form (\ref{admforms}) we want $K$ to satisfy $K(xw) =\tau(w) ^{-1}K(x)\tau(w) $, that is,
\begin{gather*}
K(xw) =\tau(w) ^{-1}L^{\ast}(x)M(w,x) ^{-1}HM(w,x) L(x) \tau(w) \\
\hphantom{K(xw)}{} =\tau(w) ^{-1}L^{\ast}(x) \upsilon^{m}H\upsilon^{-m}L(x) \tau(w)
\end{gather*}
(from Proposition \ref{L(xw)M}), where $m=w_{x}w(1) -1$. The condition is equivalent to
\begin{gather*}
\upsilon H=H\upsilon,
\end{gather*}
which is now added to the hypotheses, summarized here:

\begin{Condition}\label{hypoLH}
$L(x) $ is the solution of \eqref{Lsys} such that $L(x_{0}) =I$ and $L(x) =L\big( xw_{x}^{-1}\big) \tau(w_{x}) $ for $x\in\mathbb{T}_{\rm reg}^{N}$ where $w_{x}(1) =1$ and $xw_{x}^{-1}\in\mathcal{C}_{0}$; $L^{\ast}(x) $ is the solution of~\eqref{L*sys} satisfying $L^{\ast}(x_{0}) =I$, $K(x) =L^{\ast }(x) HL(x) $ is a solution of~\eqref{Kdieq} and~$H$ satisfies $H^{\ast}=H$, $\upsilon H=H\upsilon$.
\end{Condition}

\section{Integration by parts}\label{byparts}

In this section we establish the relation between the dif\/ferential system and the abstract relation $\langle x_{i}\mathcal{D}_{i}f,g\rangle
=\langle f,x_{i}\mathcal{D}_{i}g \rangle $ holding for $1\leq i\leq N$ and $f,g\in C^{1}\big( \mathbb{T}^{N};V_{\tau}\big) $. We demonstrate how close $L$ is to providing the desired inner product, by performing an integration-by-parts over an $\mathcal{S}_{N}$-invariant closed set $\subset\mathbb{T}_{\rm reg}^{N}$. Here $L(x) $ and $H$ satisfy the hypotheses listed in Condition~\ref{hypoLH} above. We use the identity $x_{i}\partial_{i}f^{\ast}(x) =-( x_{i}\partial_{i}f) ^{\ast}(x) $. For $\delta>0$ let
\begin{gather*}
\Omega_{\delta}:=\Big\{ x\in\mathbb{T}^{N}\colon \min\limits_{1\leq i<j\leq N}\vert x_{i}-x_{j}\vert \geq\delta\Big\} .
\end{gather*}
This set is invariant under $\mathcal{S}_{N}$ and $K(x) $ is bounded and smooth on it. Thus the following integrals exist.

\begin{Proposition}\label{xdfKg-fKxdg}Suppose $H$ satisfies Condition~{\rm \ref{hypoLH}} then for $f,g\in C^{1}\big( \mathbb{T}^{N};V_{\tau}\big) $ and $1\leq i\leq N$
\begin{gather*}
\int_{\Omega_{\delta}}\big\{ {-}( x_{i}\mathcal{D}_{i}f(x)) ^{\ast}K(x) g(x) +f(x) ^{\ast}K(x) x_{i}\mathcal{D}_{i}g(x)\big\} \mathrm{d}m(x) \\
\qquad {} =\int_{\Omega_{\delta}}x_{i}\partial_{i}\{ f(x) ^{\ast}K(x) g(x)\} \mathrm{d}m(x).
\end{gather*}
\end{Proposition}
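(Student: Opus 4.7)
My plan is to reduce the identity to a pointwise identity plus one application of the involution $x\mapsto x(i,j)$. I would first expand the right-hand side integrand by the product rule,
\begin{gather*}
x_{i}\partial_{i}\{f^{\ast}Kg\} = (x_{i}\partial_{i}f^{\ast})Kg + f^{\ast}(x_{i}\partial_{i}K)g + f^{\ast}K(x_{i}\partial_{i}g),
\end{gather*}
then apply the identity $x_{i}\partial_{i}f^{\ast}=-(x_{i}\partial_{i}f)^{\ast}$ stated just before the proposition, and substitute equation~\eqref{Kdieq} to rewrite the middle term as
\begin{gather*}
\kappa\sum_{j\neq i}\left\{\frac{x_{j}}{x_{i}-x_{j}}f^{\ast}\tau((i,j))Kg + \frac{x_{i}}{x_{i}-x_{j}}f^{\ast}K\tau((i,j))g\right\}.
\end{gather*}

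Next I would expand the left-hand side integrand by inserting the definition of $\mathcal{D}_{i}$. The only mildly subtle point is computing the adjoint of the Dunkl correction: on $\mathbb{T}^{N}$ one has $\overline{x_{i}/(x_{i}-x_{j})}=-x_{j}/(x_{i}-x_{j})$, so, writing $f_{ij}(x):=f(x(i,j))$,
\begin{gather*}
(x_{i}\mathcal{D}_{i}f)^{\ast}(x) = (x_{i}\partial_{i}f)^{\ast}(x) - \kappa\sum_{j\neq i}\frac{x_{j}}{x_{i}-x_{j}}\bigl(f^{\ast}(x)-f_{ij}^{\ast}(x)\bigr)\tau((i,j)).
\end{gather*}
Subtracting the expanded right-hand side from the expanded left-hand side, the ``diagonal'' contributions (those involving only $f$ and $g$, not their $(i,j)$-transpositions) cancel exactly against the two sums coming from $(x_{i}\partial_{i}K)$, leaving the pointwise residual
\begin{gather*}
-\kappa\sum_{j\neq i}\left\{\frac{x_{j}}{x_{i}-x_{j}}f_{ij}^{\ast}\tau((i,j))Kg + \frac{x_{i}}{x_{i}-x_{j}}f^{\ast}K\tau((i,j))g_{ij}\right\}.
\end{gather*}

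Finally I would show that this residual integrates to zero on $\Omega_{\delta}$. For each fixed $j\neq i$, the involution $x\mapsto x(i,j)$ preserves both $\Omega_{\delta}$ (which is $\mathcal{S}_{N}$-invariant) and $dm$; under it, $x_{j}/(x_{i}-x_{j})\mapsto -x_{i}/(x_{i}-x_{j})$, $f_{ij}^{\ast}\mapsto f^{\ast}$, $g\mapsto g_{ij}$, and, crucially, $K(x(i,j))=\tau((i,j))K(x)\tau((i,j))$ by the equivariance $K(xw)=\tau(w)^{-1}K(x)\tau(w)$, so the double insertion of $\tau((i,j))$ collapses. Comparing then shows that the integral of the first residual term equals minus the integral of the second for each $j$, and summing kills the whole residual. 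The equivariance of $K$ invoked here is precisely what the hypothesis $\upsilon H=H\upsilon$ of Condition~\ref{hypoLH} was set up to provide. The main bookkeeping hurdle is keeping the signs coming from $\overline{x_{i}/(x_{i}-x_{j})}$ consistent through both the adjoint computation and the change of variable, and verifying that the $\tau((i,j))$'s cancel correctly on both sides of $K$ after the substitution.
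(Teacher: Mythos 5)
Your proposal is correct and follows essentially the same route as the paper: expand both sides, substitute the system~(\ref{Kdieq}) for $x_{i}\partial_{i}K$ so the diagonal terms cancel, and kill the remaining transposition terms by the $(i,j)$-symmetrization of $\Omega_{\delta}$ and $\mathrm{d}m$ together with $K(x(i,j))=\tau((i,j))K(x)\tau((i,j))$. The signs and the residual you isolate agree with the paper's identity~(\ref{dfKg}), so there is nothing to add.
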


\begin{proof}
By def\/inition
\begin{gather*}
x_{i}\mathcal{D}_{i}g(x) =x_{i}\partial_{i}g(x) +\kappa\sum_{j\neq i}\frac{x_{i}}{x_{i}-x_{j}}\tau((i,j))( g(x) -g( x(i,j))), \\
( x_{i}\mathcal{D}_{i}f(x) ) ^{\ast}
=-x_{i}\partial_{i}f(x) ^{\ast}+\kappa\sum_{j\neq i}\frac{x_{j}}{x_{j}-x_{i}}( f(x) ^{\ast}-f( x(i,j)) ^{\ast}) \tau((i,j)) .
\end{gather*}
Thus
\begin{gather}
 -( x_{i}\mathcal{D}_{i}f(x) ) ^{\ast}K(x) g(x) +f(x) ^{\ast}K(x)x_{i}\mathcal{D}_{i}g(x) \nonumber\\
\qquad{} =x_{i}\partial_{i}f(x) ^{\ast}+x_{i}\partial_{i}g(x) \nonumber\\
\qquad\quad{} +\kappa f(x) ^{\ast}\sum_{j\neq i}\left\{ \frac{x_{j}}{x_{i}-x_{j}}\tau((i,j)) K(x)+K(x) \tau((i,j)) \frac{x_{i}}{x_{i}-x_{j}}\right\} g(x) \nonumber\\
\qquad\quad{} -\kappa\sum_{j\neq i}\frac{1}{x_{i}-x_{j}}\big\{ x_{j}f( x(i,j)) ^{\ast}\tau((i,j)) K(x) g(x) +x_{i}f(x) ^{\ast}K(x) \tau((i,j)) g( x(i,j))\big\} \nonumber\\
 \qquad{} =x_{i}\partial_{i}\{ f(x) ^{\ast}K(x)g(x) \} \label{dfKg}\\
 \qquad\quad{} -\kappa\sum_{j\neq i}\frac{1}{x_{i}-x_{j}}\big\{ x_{j}f ( x (i,j )) ^{\ast}\tau((i,j)) K(x) g(x) +x_{i}f(x) ^{\ast}K(x) \tau((i,j)) g( x(i,j))\big\}.\nonumber
\end{gather}
For each pair $\{i,j\} $ the term inside $\{\cdot\} $ is invariant under $x\mapsto x(i,j)$, because $K(x(i,j)) =\tau( (i,j)) K(x) \tau((i,j)) $, and
$x_{i}-x_{j}$ changes sign under this transformation. Thus
\begin{gather*}
\int_{\Omega_{\delta}}\frac{x_{j}f(x(i,j)) ^{\ast}\tau((i,j)) K(x) g(x) +x_{i}f(x) ^{\ast}K(x) \tau((i,j)) g(x(i,j)) }{x_{i}-x_{j}}\mathrm{d}m(x) =0
\end{gather*}
for each $j\neq i$ because $\Omega_{\delta}$ and $\mathrm{d}m$ are invariant under $(i,j)$.
\end{proof}

Observe the value of $\kappa$ is not involved in the proof. Since $x_{j}\partial_{j}=-\mathrm{i}\frac{\partial}{\partial\theta_{j}}$ when $x_{j}=e^{\mathrm{i}\theta_{j}}$ and $\mathrm{d}m(x) = (2\pi) ^{-N}\mathrm{d}\theta_{1}\cdots\mathrm{d}\theta_{N}$ one step of integration can be directly evaluated. Consider the case $i=N$ and for a f\/ixed $(N-1) $-tuple $( \theta_{1},\ldots,\theta_{N-1}) $ with $\theta_{1}<\theta_{2}<\cdots <\theta_{N-1}<\theta_{1}+2\pi$ such that $\big\vert e^{\mathrm{i}\theta_{j}}-e^{\mathrm{i}\theta_{i}}\big\vert \geq\delta$ the integral over $\theta_{N}$ is over a union of closed intervals. These are the complement of $\bigcup\limits_{1\leq j\leq N-1} \{ \theta\colon \theta_{j}-\delta^{\prime}<\theta<\theta_{j}
+\delta^{\prime}\} $ in the circle, where $\sin\frac{\delta^{\prime}}{2}=\frac{\delta}{2}$. This results in an alternating sum of values of $f^{\ast}Kg$ at the end-points of the closed intervals. Analyzing the resulting integral (over $( \theta_{1},\ldots,\theta_{N-1}) $ with respect to $\mathrm{d}\theta_{1} \cdots \mathrm{d}\theta_{N-1}$) is one of the key steps in showing that a given $K$ provides the desired inner product. In other parts of this paper we f\/ind that $H$ must satisfy another commuting relation.

\section{Local power series near the singular set}\label{locps}

In this section assume $\kappa\notin\mathbb{Z+}\frac{1}{2}$. We consider the
system (\ref{Lsys}) in a neighborhood of the face $ \{ x\colon x_{N-1}=x_{N} \} $ of $\mathcal{C}_{0}$. We use a coordinate system which treats the singularity in a simple way. For a more concise notation def\/ine
\begin{gather*}
x(u,z) =( x_{1},x_{2},\ldots,x_{N-2},u-z,u+z)\in\mathbb{C}_{\times}^{N}%
\end{gather*}

We consider the system in terms of the variable $x(u,z) $ subject to the conditions that the points $x_{1},x_{2},\ldots,x_{N-2},u$ are pairwise distinct and $\vert z\vert <\min\limits_{1\leq j\leq N-2}\vert x_{j}-u \vert $, also $\vert z\vert <\vert u\vert$, $\operatorname{Im}\frac{z}{u}>0$ (these conditions imply
$\arg(u-z) <\arg(u+z) $). This allows power series
expansions in $z$.

For $z_{1},z_{2}\in\mathbb{C}_{\times}$ let
\begin{gather*}
\rho(z_{1},z_{2}) :=\left[
\begin{matrix}
z_{1}I_{m_{\tau}} & O\\
O & z_{2}I_{n_{\tau}-m_{\tau}} \end{matrix}
\right] .
\end{gather*}
Let $\sigma:=\tau((N-1,N)) =\rho( -1,1) $. We analyze the local solution $L( x(u-z,u+z)) $ with an initial condition specif\/ied later. We obtain the dif\/ferential system (using $\partial_{z}:=\frac{\partial}{\partial z}$, $\partial_{u}:=\frac{\partial}{\partial u}$)
\begin{gather*}
\partial_{z}L(x) =\partial_{N}L-\partial_{N-1}L\\
\hphantom{\partial_{z}L(x)}{} =\kappa L\left\{ \sum_{j=1}^{N-2}\left( \frac{\tau((j,N)) }{u-x_{j}+z}-\frac{\tau( (j,N-1)) }{u-x_{j}-z}\right) +\frac{\tau(N-1,N) }{z}-\frac{\gamma}{u+z}I+\frac{\gamma}{u-z}I\right\} ,
\\
\partial_{u}L(x) =\partial_{N}L+\partial_{N-1}L\\
\hphantom{\partial_{u}L(x)}{} =\kappa L\left\{ \sum_{j=1}^{N-2}\left( \frac{\tau((j,N)) }{u-x_{j}+z}+\frac{\tau( (j,N-1)) }{u-x_{j}-z}\right) -\frac{\gamma}{u+z}I-\frac{\gamma}{u-z}I\right\} ,
\\
\partial_{j}L(x) =\kappa L(x) \left\{\sum_{i=1,i\neq j}^{N-2}\frac{\tau((i,j)) }{x_{j}-x_{i}}-\frac{\gamma}{x_{j}}I+\frac{\tau( (j,N-1)) }{x_{j}-u+z}+\frac{\tau( ( j,N)) }{x_{j}-u-z}\right\} , \\
\hphantom{\partial_{j}L(x) =}{} 1\leq j\leq N-2.
\end{gather*}

Using the expansion $\frac{1}{t-z}=\sum\limits_{n=0}^{\infty}\frac{z^{n}}{t^{n+1}}$ for $\vert z\vert <\vert t\vert $ we let
\begin{gather*}
\beta_{n}(x(u,0)) :=\sum_{j=1}^{N-2}\frac {\tau((j,N)) }{(u-x_{j}) ^{n+1}}
\end{gather*}
for $n=0,1,2,\ldots$and express the equations as (since $\sigma\tau (( j,N)) \sigma=\tau((j,N-1))$)
\begin{gather*}
\partial_{z}L(x) =\kappa L(x) \left\{
\sum_{n=0}^{\infty}\big\{ (-1) ^{n}\beta_{n}( x(u,0)) -\sigma\beta_{n}(x(u,0)) \sigma\big\} z^{n}+\frac{\sigma}{z}-\frac{\gamma}{u+z}I+\frac{\gamma}{u-z}I\right\}, \\
\partial_{u}L(x) =\kappa L(x) \left\{\sum_{n=0}^{\infty}\big\{ (-1) ^{n}\beta_{n}( x(u,0)) +\sigma\beta_{n}(x(u,0))\sigma\big\} z^{n}-\frac{\gamma}{u+z}I-\frac{\gamma}{u-z}I\right\} ,\\
\partial_{j}L(x) =\kappa L(x) \left\{\sum_{i=1,i\neq j}^{N-2}\frac{\tau((i,j)) }{x_{j}-x_{i}}-\frac{\gamma}{x_{j}}I-\sum_{n=0}^{\infty}\frac{\tau(
(j,N-1)) +(-1) ^{n}\tau((j,N)) }{(u-x_{j}) ^{n+1}}z^{n}\right\} ,\\
\hphantom{\partial_{j}L(x) =}{} 1\leq j\leq N-2.
\end{gather*}
Set
\begin{gather*}
B_{n}(x(u,0)) =(-1) ^{n}\beta _{n}(x(u,0)) -\sigma\beta_{n}( x(u,0)) \sigma, \qquad n=0,1,2,\ldots.
\end{gather*}
Note $\sigma B_{n}x(u,0) \sigma=(-1) ^{n+1}B_{n}(x(u,0)) $. Suggested by the relation
\begin{gather*}
\frac{\partial}{\partial z}\rho \big( z^{-\kappa},z^{\kappa}\big)=\frac{\kappa}{z}\rho\big({}-z^{-\kappa},z^{\kappa}\big)=\frac{\kappa }{z}\rho\big(z^{-\kappa},z^{\kappa}\big) \sigma
\end{gather*} we look for a solution of the form
\begin{gather}
L(x) =\left( \big( u^{2}-z^{2}\big) \prod_{j=1}^{N-2}x_{j}\right) ^{-\gamma\kappa}\rho\big( z^{-\kappa},z^{\kappa}\big)
\sum_{n=0}^{\infty}\alpha_{n}(x(u,0))z^{n},\label{Lzseries}
\end{gather}
where each $a_{n}(x(u,0)) $ is matrix-valued and analytic in $x(u,0) $, and the initial condition is $\alpha _{0}\big( x^{(0) }\big) =I$, where $x^{(0) }$
is a base point, chosen as $\big( 1,\omega,\omega^{2},\ldots,\omega^{N-3},\omega^{-3/2},\omega^{-3/2}\big) $ (that is, $u=\omega^{-3/2}$, $z=0$),
where $\omega:=e^{2\pi\mathrm{i}/N}$. Implicitly restrict $(x_{1},\ldots,x_{N-1},u) $ to a simply connected open subset of $\mathbb{C}_{\rm reg}^{N-1}$ containing $\big( 1,\omega,\omega^{2},\ldots,\omega^{N-3},\omega^{-3/2}\big)$. Substitute~(\ref{Lzseries}) in the~$\partial_{z}$ equation (suppressing the $x(u,0) $ argument in the $\alpha_{n}$'s)
\begin{gather*}
\partial_{z}L =\kappa\gamma\left( \frac{1}{u+z}-\frac{1}{u-z}\right)\left( \big( u^{2}-z^{2}\big) \prod_{j=1}^{N-2}x_{j}\right)
^{-\gamma\kappa}\rho\big( z^{-\kappa},z^{\kappa}\big) \sum_{n=0}^{\infty}\alpha_{n}z^{n}\\
\hphantom{\partial_{z}L =}{} +\left( \big( u^{2}-z^{2}\big) \prod_{j=1}^{N-2}x_{j}\right)^{-\gamma\kappa}\frac{\kappa}{z}\rho\big( z^{-\kappa},z^{\kappa}\big) \sigma\sum_{n=0}^{\infty}\alpha_{n}z^{n}\\
\hphantom{\partial_{z}L =}{} +\left( \big( u^{2}-z^{2}\big) \prod_{j=1}^{N-2}x_{j}\right)^{-\gamma\kappa}\rho\big( z^{-\kappa},z^{\kappa}\big) \sum_{n=1}^{\infty}n\alpha_{n}z^{n-1}\\
\hphantom{\partial_{z}L}{}
 =\kappa\left( \big( u^{2}-z^{2}\big) \prod_{j=1}^{N-2}x_{j}\right) ^{-\gamma\kappa}\rho\big( z^{-\kappa},z^{\kappa}\big) \\
\hphantom{\partial_{z}L =}{}
 \times\sum_{n=0}^{\infty}\alpha_{n}z^{n}\left\{ \sum_{m=0}^{\infty}B_{m}(u) z^{m}+\frac{\sigma}{z}-\gamma\left( \frac{1}{u+z}-\frac{1}{u-z}\right) \right\} ,
\end{gather*}
which simplif\/ies to
\begin{gather}
\frac{\kappa}{z}\sum_{n=0}^{\infty}( \sigma\alpha_{n}-\alpha_{n}\sigma ) z^{n}+\sum_{n=1}^{\infty}n\alpha_{n}z^{n-1}=\kappa\sum
_{n=0}^{\infty}\alpha_{n}z^{n}\sum_{m=0}^{\infty}B_{m} ( x (u,0 ) ) z^{m}.\label{eqnDz}
\end{gather}
The equations for $\partial_{u}$ and $\partial_{j}$ simplify to
\begin{gather*}
 \left( \big( u^{2}-z^{2}\big) \prod_{j=1}^{N-2}x_{j}\right) ^{-\gamma\kappa}\left\{ \sum_{n=0}^{\infty}\partial_{u}\alpha_{n}z^{n}
-\kappa\gamma\left( \frac{1}{u+v}+\frac{1}{u-v}\right) \sum_{n=0}^{\infty }\alpha_{n}z^{n}\right\} \\
\qquad{} =\kappa\left( \big( u^{2}-z^{2}\big) \prod_{j=1}^{N-2}x_{j}\right)^{-\gamma\kappa}\sum_{n=0}^{\infty}\alpha_{n}z^{n}\\
\qquad\quad{} \times\left\{ \sum_{m=0}^{\infty}\big\{ (-1) ^{m}\beta
_{m}(x(u,0)) +\sigma\beta_{m}( x(u,0)) \sigma\big\} z^{m}-\frac{\gamma}{u+v}I-\frac{\gamma}{u-v}I\right\} ,
\end{gather*}
leading to (with $1\leq j\leq N-2$)
\begin{gather*}
\sum_{n=0}^{\infty}\partial_{u}\alpha_{n}(x(u,0))z^{n} =\kappa\sum_{n=0}^{\infty}\alpha_{n}( x(u,0)) z^{n}\sum_{m=0}^{\infty}\big\{ (-1) ^{m}\beta_{m}(x(u,0)) +\sigma\beta_{m} ( x (u,0)) \sigma\big\} z^{m},\\
\sum_{n=0}^{\infty}\partial_{j}\alpha_{n}(x(u,0))z^{n} =\kappa\sum_{n=0}^{\infty}\alpha_{n}( x(u,0)) z^{n}\\
\hphantom{\sum_{n=0}^{\infty}\partial_{j}\alpha_{n}(x(u,0))z^{n} =}{}
\times\left\{ \sum_{i=1,i\notin j}^{N-2}\frac{\tau( (i,j)) }{x_{j}-x_{i}}-\sum_{m=0}^{\infty}\frac{\tau((j,N-1)) +(-1) ^{m}\tau((j,N)) }{(u-x_{j}) ^{m+1}}z^{m}\right\}.
\end{gather*}
We only need the equations for $\alpha_{0}( x(u,0)) $ (that is, the coef\/f\/icient of $z^{0}$) to initialize the~$\partial_{z}$ equation (this is valid because the system is Frobenius integrable):
\begin{gather}
\partial_{u}\alpha_{0}(x(u,0)) =\kappa \alpha_{0}(x(u,0)) \big\{ \beta_{0}(x(u,0)) +\sigma\beta_{0}( x(u,0)) \sigma\big\} ,\label{dua0x}\\
\partial_{j}\alpha_{0}(x(u,0)) =\kappa \alpha_{0}(x(u,0)) \left\{ \sum_{i=1,i\notin j}^{N-2}\frac{\tau((i,j)) }{x_{j}-x_{i}} -\frac{\tau((j,N-1)) +\tau((
j,N)) }{(u-x_{j}) }\right\} ,\nonumber\\
\hphantom{\partial_{j}\alpha_{0}(x(u,0)) =}{} 2\leq j\leq N-2.\nonumber
\end{gather}

\begin{Lemma}
$\sigma\alpha_{0}(x(u,0)) \sigma=\alpha_{0}(x(u,0)) $ and $\alpha_{0}( x(u,0)) $ is invertible.
\end{Lemma}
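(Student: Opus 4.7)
The plan is to exploit the $\sigma$-conjugation symmetry of the defining system \eqref{dua0x} for $\alpha_{0}$. The key algebraic facts are $\sigma^{2}=I$, $\sigma\tau((j,N))\sigma=\tau((j,N-1))$ (since $\sigma=\tau((N-1,N))$), and that $\sigma$ commutes with each $\tau((i,j))$ for $1\leq i<j\leq N-2$ (disjoint supports). Applied to the right-hand sides of \eqref{dua0x}, these identities yield
\begin{gather*}
\sigma(\beta_{0}+\sigma\beta_{0}\sigma)\sigma=\beta_{0}+\sigma\beta_{0}\sigma,\qquad
\sigma(\tau((j,N-1))+\tau((j,N)))\sigma=\tau((j,N-1))+\tau((j,N)),
\end{gather*}
so the matrix $M_{u}(x(u,0)):=\kappa(\beta_{0}+\sigma\beta_{0}\sigma)$ and each matrix $M_{j}(x(u,0))$ appearing on the right of the $\partial_{j}$ equations are invariant under conjugation by $\sigma$.

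Consequently, if $\alpha_{0}$ solves $\partial_{u}\alpha_{0}=\alpha_{0}M_{u}$ and $\partial_{j}\alpha_{0}=\alpha_{0}M_{j}$, then $\sigma\alpha_{0}\sigma$ satisfies the same system, since $\partial_{u}(\sigma\alpha_{0}\sigma)=\sigma\alpha_{0}M_{u}\sigma=(\sigma\alpha_{0}\sigma)(\sigma M_{u}\sigma)=(\sigma\alpha_{0}\sigma)M_{u}$, and analogously for $\partial_{j}$. Because the full system \eqref{Lsys} was shown to be Frobenius integrable in Section~\ref{difsys}, the reduced system at $z=0$ inherits integrability, and solutions on the simply connected domain containing the base point $x^{(0)}$ are uniquely determined by their value there. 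Since $\sigma I\sigma=I=\alpha_{0}(x^{(0)})$, uniqueness yields $\sigma\alpha_{0}(x(u,0))\sigma=\alpha_{0}(x(u,0))$.

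For invertibility I would apply Jacobi's identity to the same linear system. From $\partial_{u}\alpha_{0}=\alpha_{0}M_{u}$ and $\partial_{j}\alpha_{0}=\alpha_{0}M_{j}$ one obtains the compatible scalar PDE system
\begin{gather*}
\partial_{u}\det\alpha_{0}=(\det\alpha_{0})\operatorname{tr}(M_{u}),\qquad \partial_{j}\det\alpha_{0}=(\det\alpha_{0})\operatorname{tr}(M_{j}),
\end{gather*}
with analytic coefficients on the simply connected domain and initial value $\det\alpha_{0}(x^{(0)})=\det I=1\neq 0$. The solution is the exponential of an analytic function, hence nowhere vanishing; thus $\alpha_{0}(x(u,0))$ is invertible.

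I do not expect a serious obstacle here: the argument reduces to a symmetry check and a standard uniqueness statement. The most delicate point is the appeal to uniqueness for a multivariable PDE system, but this is immediate from the Frobenius integrability already established for the parent system \eqref{Lsys}.
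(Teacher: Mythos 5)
Your proposal is correct and follows essentially the same route as the paper: the $\sigma$-conjugation invariance of the right-hand sides of \eqref{dua0x} shows $\alpha_{0}$ and $\sigma\alpha_{0}\sigma$ solve the same system, and agreement at the base point $x^{(0)}$ forces equality, while invertibility comes from Jacobi's identity applied to the same linear system. The only cosmetic difference is that the paper integrates the resulting scalar equations explicitly, exhibiting $\det\alpha_{0}$ as a product of powers of $(x_{i}-x_{j})$ and $(x_{i}-u)$, whereas you argue abstractly that the determinant is the exponential of an analytic primitive and hence nonvanishing; both are valid.
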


\begin{proof}
By hypothesis $\alpha_{0}\big( x^{(0) }\big) =I$. The right hand sides of the system are invariant under the transformation $Q\mapsto \sigma Q\sigma$ thus $\alpha_{0}(x(u,0)) $ and $\sigma\alpha_{0}(x(u,0)) \sigma$ satisfy the same system. They agree at the base-point $x^{(0) }$, hence everywhere in the domain. By Jacobi's identity the determinant satisf\/ies (where $\lambda:=\operatorname{tr}(\sigma) =n_{\tau}-2m_{\tau}$)
\begin{gather*}
\partial_{u}\det\alpha_{0}(x(u,0)) =\kappa\det\alpha_{0}(x(u,0)) \operatorname{tr} \{ \beta_{0}(x(u,0)) +\sigma\beta
_{0}(x(u,0)) \sigma \} \\
\hphantom{\partial_{u}\det\alpha_{0}(x(u,0))}{} =2\kappa\det\alpha_{0}(x(u,0)) \mathrm{\lambda }\sum_{j=1}^{N-2}\frac{1}{(u-x_{j}) },\\
\partial_{j}\det\alpha_{0}(x(u,0)) =\kappa\lambda\det\alpha_{0}(x(u,0)) \left\{\sum_{i=1,i\notin j}^{N-2}\frac{1}{x_{j}-x_{i}}-\frac{2}{u-x_{j}}\right\},\qquad 1\leq j\leq N-2,\\
\det\alpha_{0}(x(u,0)) =\prod_{1\leq i<j\leq N-2}\left( \frac{x_{i}-x_{j}}{x_{i}^{(0) }-x_{j}^{(0) }}\right) ^{\lambda\kappa}\prod_{i=1}^{N-2}\left( \frac{x_{i}-u}{x_{i}^{(0) }-x_{N-1}^{(0) }}\right)^{2\lambda\kappa},
\end{gather*}
the multiplicative constant follows from $\alpha_{0}\big( x^{(0) }\big) =I$. Thus $\alpha_{0}(x(u,0))$ is nonsingular in its domain.
\end{proof}

We turn to the inductive def\/inition of $\{ \alpha_{n}( x(u,0) ) \colon n\geq1\} $.

In terms of the block decomposition $( m_{\tau}+( n_{\tau}-m_{\tau})) \times( m_{\tau}+( n_{\tau}-m_{\tau})) $ (henceforth called the $\sigma$-\emph{block
decomposition}) of a matrix
\begin{gather*}
\alpha=\left[
\begin{matrix}
\alpha_{11} & \alpha_{12}\\
\alpha_{21} & \alpha_{22}%
\end{matrix}
\right]
\end{gather*}
$\sigma\alpha\sigma=\alpha$ if and only if $\alpha_{12}=O=\alpha_{21}$ and $\sigma\alpha\sigma=-\alpha$ if and only if $\alpha_{11}=O=\alpha_{22}$. Write the $\sigma$-block decomposition of $\alpha_{n}(u) $ as
\begin{gather*}
\alpha_{n}=\left[
\begin{matrix}
\alpha_{n,11} & \alpha_{n,12}\\
\alpha_{n,21} & \alpha_{n,22}%
\end{matrix}
\right]
\end{gather*}
then the coef\/f\/icient of $z^{n-1}$ on the left side of equation (\ref{eqnDz}) is
\begin{gather*}
\kappa ( \sigma\alpha_{n}-\alpha_{n}\sigma ) +n\alpha_{n}=\left[
\begin{matrix}
n\alpha_{n,11} & (n-2\kappa) \alpha_{n,12}\\
( n+2\kappa) \alpha_{n,21} & n\alpha_{n,22}
\end{matrix}
\right] ,
\end{gather*}
and on the right side it is
\begin{gather*}
\kappa S_{n}(x(u,0)) :=\kappa\sum_{i=0}^{n-1}\alpha_{n-1-i}B_{i}(x(u,0)) ,
\end{gather*}
for $n\geq1$. Arguing inductively suppose $\sigma\alpha_{m}\sigma= (-1 ) ^{m}\alpha_{m}$ for $0\leq m\leq n$, then $\sigma S_{n}\sigma
=\sum\limits_{i=0}^{n-1} ( \sigma\alpha_{n-1-i}\sigma ) ( \sigma B_{i}\sigma ) =\sum\limits_{i=0}^{n-1}(-1) ^{n-1-i+i-1}\alpha_{n-1-i}B_{i}$ and thus $\sigma S_{n}(u) \sigma= (-1) ^{n}S_{n}(u) $. In terms of the $\sigma$-block decomposition $\left[
\begin{matrix}
S_{n,11} & S_{n,12}\\
S_{n,21} & S_{n,22}%
\end{matrix}
\right] $ of $S_{n}(x(u,0)) $ this condition implies $S_{n,12}=O=S_{n,21}$ when $n$ is even, and $S_{n,11}=O=S_{n,22}$ when~$n$ is odd. This implies (for $n=1,2,3,\ldots$)
\begin{gather}
\alpha_{2n}(x(u,0)) =\frac{\kappa}{2n}S_{2n}(x(u,0)) ,\label{recurS}\\
\alpha_{2n-1} ( x ( u,0 )) =\rho\left( \frac{\kappa}{2n-1-2\kappa},\frac{\kappa}{2n-1+2\kappa}\right) S_{2n-1}(x(u,0)) ,\nonumber
\end{gather}
and thus $\sigma\alpha_{n}(x(u,0)) \sigma= ( -1) ^{n}\alpha_{n}(x(u,0)) $. In particular
\begin{gather*}
\alpha_{1}(x(u,0)) =\rho\left( \frac{\kappa }{1-2\kappa},\frac{\kappa}{1+2\kappa}\right) \alpha_{0}( x(u,0)) B_{0}(x(u,0)) ,
\end{gather*}
and all the coef\/f\/icients are determined; by hypothesis $\kappa\notin \mathbb{Z+}\frac{1}{2}$ and the denominators are of the form $2m+1\pm2\kappa$.

Henceforth denote the series (\ref{Lzseries}), solving (\ref{Lsys}) with the normalization $\alpha_{0}\big( x^{(0) }\big) =I$ by~$L_{1}(x) $. It is def\/ined for all $x(u,z) \in\mathcal{C}_{0}$ subject to $\vert z\vert <\min\limits_{1\leq j\leq N-2}\vert x_{j}-u \vert $, also $\vert z\vert <\vert u\vert$, $\operatorname{Im}\frac{z}{u}>0$. The radius of convergence depends on $x(u,0) $. Return to using $L ( x) $ to denote the solution from Def\/inition~\ref{DefL(x)T} (on all of $\mathbb{T}_{\rm reg}^{N}$ and $L(x_{0}) =I$). In terms of $x (u,z ) $ the point $x_{0}$ corresponds to $u=\frac{1}{2} \big(\omega^{-1}+\omega^{-2} \big) $, $z=\frac{1}{2}\big( \omega^{-1}-\omega^{-2}\big) $, $x(u,z) =\big( 1,\omega,\ldots,\omega^{N-3},u-z,u+z\big) $, then $\min\limits_{1\leq j\leq N-2} \vert u-x_{j}\vert =\sin\frac{\pi}{N}\big( 5+4\cos\frac{2\pi}{N}\big) $ and $\vert z\vert =\sin\frac{\pi}{N}$ (also $\frac{z}{u}=\mathrm{i}\tan\frac{\pi}{N}$) and $x_{0}$ is in the domain of convergence of the series $L_{1}(x) $. Thus the relation $L_{1} (x ) =L_{1}(x_{0}) L(x) $ holds in the domain of $L_{1}$ in $\mathcal{C}_{0}$. This implies the important fact that $L_{1}(x_{0}) $ is an analytic function of~$\kappa$, to be
exploited in Section~\ref{anlcmat}.

\subsection{Behavior on boundary}

The term $\rho ( z^{-\kappa},z^{\kappa} ) $ implies that $L_{1}(x) $ is not continuous at $z=0$, that is, on the boundary $ \{ x\colon x_{N-1}=x_{N}\}$. However there may be a weak type of continuity, specif\/ically
\begin{gather*}
\lim\limits_{x_{N-1}-x_{N}\rightarrow0} (K(x) -K( x(N-1,N))) =0.
\end{gather*}
With the aim of expressing the desired $K(x) $ in the form $L(x) ^{\ast}C^{\ast}CL(x) $ (and $C$ is unknown at this stage) we consider $CL(x) $ in series form, that is $CL_{1}(x_{0}) ^{-1}L_{1}(x) $ (recall $\det L(x) \neq0$ in $\mathcal{C}_{0}$). We analyze the ef\/fect of~$C$ on the weak continuity condition. Denote $C^{\prime}:=CL_{1}( x_{0}) ^{-1}$.

From Proposition \ref{L(xw)M} $L(x(N-1,N)) =\nu((N-1,N)) L(x) \tau( (N-1,N)) =L(x) \sigma$, because $w(1) =1$ for $w=(N-1,N) $, [for the special case $N=3$, $\tau=(2,1) $, $\mathbb{T}_{\rm reg}^{3}$ has two components and we def\/ine $L(x) =L( x(2,3)) \sigma$ for the component $\neq\mathcal{C}_{0}$]. By use of $x(u,z)(N-1,N) =x(u,-z) $ it follows that
\begin{gather*}
CL( x(u,z) (N-1,N)) =CL(x(u,z) ) \sigma =C^{\prime}\left( x_{N}x_{N-1}\right) ^{-\gamma\kappa}\rho\big(z^{-\kappa},z^{\kappa}\big) \sum_{n=0}^{\infty}\alpha_{n}(u)z^{n}\sigma\\
\hphantom{CL( x(u,z) (N-1,N))}{} =C^{\prime}\sigma ( x_{N}x_{N-1} ) ^{-\gamma\kappa}\rho\big(z^{-\kappa},z^{\kappa}\big) \sum_{n=0}^{\infty}\alpha_{n}(u)
(-1) ^{n}z^{n},
\end{gather*}
because $\sigma\alpha_{n}(u) \sigma=(-1)^{n}\alpha_{n}(u) $ and $\sigma=\rho(-1,1) $.

Recall $L^{\ast}(x) $ is def\/ined as $L(\phi x)^{T}$ with complex constants replaced by their conjugates. Then $\phi x(u,z) =\big( x_{1}^{-1},x_{2}^{-1},\ldots,x_{N-2}^{-1},\frac{1}{u-z},\frac{1}{u+z}\big)$. To compute $L_{1} ( \phi x (u,z )) $ replace $u$ by $u^{\prime}=\frac{u}{(u+z)
(u-z) }$ and replace $z$ by $z^{\prime}=-\frac{z}{(u+z) (u-z) }$. When restricted to the torus $u^{\prime}=\frac{1}{2}\big( \frac{1}{x_{N-1}}+\frac{1}{x_{N}}\big) =\overline{u}$ and $z^{\prime}=\frac{1}{2}\big( \frac{1}{x_{N-1}}-\frac{1}{x_{N}}\big) =\overline{z}$. The terms $\beta_{n}(u) :=\sum\limits_{j=1}^{N-2}\frac{\tau((j,N)) }{( u-x_{j}) ^{n+1}}$ in the intermediate formulae for $L_{1}$ are replaced by their complex conjugates when $x(u,z) \in\mathbb{T}^{N}$. Similarly $\widetilde{\beta}_{k}:=\sum\limits_{m=0}^{\infty}\sum\limits_{j=1}^{N-2}\frac{\tau((j,N)) }{(u_{0}-x_{j}) ^{k+1}}$ transforms to $\overline{( \widetilde{\beta}_{k}) }$ because the constant $u_{0}$ is conjugated. Thus for $x(u,z) \in\mathbb{T}_{\rm reg}^{N}$
\begin{gather*}
L_{1}(x(u,z)) ^{\ast}=\sum_{m=0}^{\infty}\alpha_{m}(u) ^{\ast}\overline{z}^{m}\rho \big( \overline {z}^{-\kappa},\overline{z}^{\kappa}\big) C^{^{\prime}\ast} (\overline{x_{N}x_{N-1}}) ^{-\gamma\kappa};
\end{gather*}
$\alpha_{m}(u) ^{\ast}$ denotes the adjoint of the matrix $\alpha_{m}(u) $. Then
\begin{gather*}
L_{1}( x(u,z) (N-1,N)) ^{\ast}=\sum_{m=0}^{\infty}(-1) ^{m}\alpha_{m}(u) ^{\ast
}\overline{z}^{m}\rho( \overline{z}^{-\kappa},\overline{z}^{\kappa}) \sigma C^{^{\prime}\ast}( \overline{x_{N}x_{N-1}})
^{-\gamma\kappa}.
\end{gather*}
Furthermore (recall $K(x(N-1,N)) =\sigma K(x) \sigma$ by def\/inition)
\begin{gather*}
K(x(u,z)) =\sum_{m,n=0}^{\infty}\overline
{z}^{m}z^{n}\alpha_{m}(u) ^{\ast}\rho\big( \overline
{z}^{-\kappa},\overline{z}^{\kappa}\big) C^{\prime\ast}C^{\prime}%
\rho\big( z^{-\kappa},z^{\kappa}\big) \alpha_{n}(u) ,\\
K(x(u,-z)) =\sum_{m,n=0}^{\infty}\overline
{z}^{m}z^{n}(-1) ^{m+n}\alpha_{m}(u) ^{\ast}\rho\big( \overline{z}^{-\kappa},\overline{z}^{\kappa}\big) \sigma
C^{\prime\ast}C^{\prime}\sigma\rho\big( z^{-\kappa},z^{\kappa}\big)\alpha_{n}(u) .
\end{gather*}
The term of lowest order in $z$ in $K(x(u,z)) -K( x(u,-z)) $ is
\begin{gather*}
\alpha_{0}(u) ^{\ast}\rho\big( \overline{z}^{-\kappa
},\overline{z}^{\kappa}\big) \big\{ C^{\prime\ast}C^{\prime}-\sigma
C^{\prime\ast}C^{\prime}\sigma\big\} \rho\big( z^{-\kappa},z^{\kappa}\big) \alpha_{0}(u) .
\end{gather*}
In terms of the $\sigma$-block decomposition, with%
\begin{gather*}
C^{\prime\ast}C^{\prime}=
\begin{bmatrix}
c_{11} & c_{12}\\
c_{12}^{\ast} & c_{22}
\end{bmatrix}
,\qquad \alpha_{0}(u) =
\begin{bmatrix}
a_{11}(u) & O\\
O & a_{22}(u)
\end{bmatrix}
\end{gather*}
the expression equals
\begin{gather*}
2
\begin{bmatrix}
O & \left( \dfrac{z}{\overline{z}}\right) ^{\kappa}a_{11}(u)
^{\ast}c_{12}a_{22}(u) \\
\left( \dfrac{\overline{z}}{z}\right) ^{\kappa}a_{22}(u)
^{\ast}c_{12}^{\ast}a_{11}(u) & O
\end{bmatrix},
\end{gather*}
which tends to zero as $z\rightarrow0$ if and only if $c_{12}=0$, that is, $\sigma C^{\ast}C\sigma=C^{\ast}C$.

\begin{Proposition}
Suppose $C^{\prime\ast}C^{\prime}$ commutes with $\sigma$ then
\begin{gather*}
K(x(u,z)) -K( x(u,z) (N-1,N)) =O\big( \vert z\vert ^{1-2\vert\kappa\vert }\big).
\end{gather*}
\end{Proposition}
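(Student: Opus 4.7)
The plan is to substitute the series representations for $L_{1}(x(u,z))$ and $L_{1}(x(u,z))^{\ast}$ displayed just before the statement into $K=L^{\ast}HL$ with $H=C^{\prime\ast}C^{\prime}$, and to compare with $K(x(u,-z))=K(x(u,z)(N-1,N))$. Using the hypothesis $\sigma H\sigma=H$, the $\sigma$'s flanking $H$ in the formula for $K(x(u,-z))$ disappear, and subtracting term by term yields
\begin{gather*}
K(x(u,z))-K(x(u,-z))=\sum_{m,n\geq0}\bigl(1-(-1)^{m+n}\bigr)\overline{z}^{\,m}z^{n}\,\alpha_{m}(u)^{\ast}\rho\bigl(\overline{z}^{-\kappa},\overline{z}^{\kappa}\bigr)H\rho\bigl(z^{-\kappa},z^{\kappa}\bigr)\alpha_{n}(u).
\end{gather*}
Every contribution with $m+n$ even cancels identically, so only the terms with $m+n$ odd (in particular $m+n\geq 1$) survive.

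Each surviving summand is then bounded by tracking two scaling contributions. Because $x(u,z)\in\mathbb{T}^{N}$, one has $|\overline{z}|=|z|$, and since $H$ is block-diagonal in the $\sigma$-decomposition the matrix $\rho(\overline{z}^{-\kappa},\overline{z}^{\kappa})H\rho(z^{-\kappa},z^{\kappa})$ is itself block-diagonal with diagonal entries of magnitudes $|z|^{-2\kappa}$ and $|z|^{2\kappa}$, hence is $O(|z|^{-2|\kappa|})$ as $z\to 0$. Combined with the factor $|\overline{z}^{\,m}z^{n}|=|z|^{m+n}$, each surviving summand is $O(|z|^{m+n-2|\kappa|})$. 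The dominant contribution comes from $m+n=1$, giving exactly $|z|^{1-2|\kappa|}$, and the tail over $m+n\geq 3$ (odd) is subleading, namely $O(|z|^{3-2|\kappa|})$. The unimodular prefactors $(x_{N-1}x_{N})^{-\gamma\kappa}$ and $(\overline{x_{N-1}x_{N}})^{-\gamma\kappa}$ on $\mathbb{T}^{N}$ contribute no order to the estimate.

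The main technical point is uniform control of the tail in a fixed neighborhood of $z=0$, so the $O$-constant is genuinely finite and independent of the direction of approach. This is supplied by the recursion~\eqref{recurS}, together with the previously established convergence of $\sum_{n}\alpha_{n}(u)z^{n}$ in a definite neighborhood of $z=0$ for $u$ in the relevant simply connected domain: applying the Cauchy product gives $\sum_{k\geq 3\,\text{odd}}|z|^{k}\sum_{m+n=k}\lVert\alpha_{m}(u)\rVert\,\lVert\alpha_{n}(u)\rVert\leq\bigl(\sum_{n}\lVert\alpha_{n}(u)\rVert\,|z|^{n}\bigr)^{2}$, which is finite; this justifies absorbing the higher-order odd terms into the $O(|z|^{1-2|\kappa|})$ bound and completes the argument.
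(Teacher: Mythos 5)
Your proposal is correct and follows essentially the same route as the paper: substitute the local series into $K=L^{\ast}HL$, use that commuting with $\sigma$ makes $\rho(\overline{z}^{-\kappa},\overline{z}^{\kappa})C^{\prime\ast}C^{\prime}\rho(z^{-\kappa},z^{\kappa})=\rho(|z|^{-2\kappa},|z|^{2\kappa})C^{\prime\ast}C^{\prime}$, observe that the reflection introduces the factor $(1-(-1)^{m+n})$ killing all even $m+n$, and identify the $m+n=1$ terms as the dominant ones of order $|z|^{1-2|\kappa|}$. Your explicit Cauchy-product control of the odd tail is a slightly more detailed justification of the paper's remark that the remaining terms are subleading, but it is the same argument in substance.
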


\begin{proof}
The hypothesis implies $C^{\prime\ast}C^{\prime}$ commutes with $\rho(z^{-\kappa},z^{\kappa}) $, thus
\begin{gather*}
K(x(u,z)) -K ( x(u,z)(N-1,N)) \\
\qquad{} =\sum_{m,n=0}^{\infty}\overline{z}^{m}z^{n}\big( 1-(-1)^{m+n}\big) \alpha_{m}(u) ^{\ast}\rho\big(\vert z \vert ^{-2\kappa},\vert z\vert ^{2\kappa}\big)C^{\prime\ast}C^{\prime}\alpha_{n}(u) \\
\qquad{} =2z\alpha_{0}(u) ^{\ast}\rho\big( \vert z\vert^{-2\kappa},\vert z\vert ^{2\kappa}\big) C^{\prime\ast}
C^{\prime}\alpha_{1}(u) +2\overline{z}\alpha_{1}(u) ^{\ast}\rho\big( \vert z\vert ^{-2\kappa}, \vert z \vert ^{2\kappa}\big) C^{^{\prime}\ast}C^{\prime}\alpha_{0} (u) \\
\qquad\quad{} +\sum_{m+n\geq2}^{\infty}\overline{z}^{m}z^{n}\big( 1-(-1)^{m+n}\big) \alpha_{m}(u) ^{\ast}\rho\big(\vert
z \vert ^{-2\kappa},\vert z\vert ^{2\kappa}\big)C^{\prime\ast}C^{\prime}\alpha_{n}(u) .
\end{gather*}
The dominant terms come from $m=0,n=1$ and $m=1,n=0$; both of order $O\big(\vert z\vert ^{1-2\vert \kappa\vert }\big) $.
\end{proof}

We will see later for purpose of integration by parts, that the change in $K$ between the points $\big( x_{1},\ldots,x_{N-2},e^{\mathrm{i}\theta}, e^{\mathrm{i} ( \theta-\varepsilon ) }\big) $ and $\big(x_{1},\ldots,x_{N-2},e^{\mathrm{i}\theta},e^{\mathrm{i}( \theta+\varepsilon) }\big) $ is a key part of the analysis; this uses the relation $K\big( \big( x_{1},\ldots,x_{N-2},e^{\mathrm{i}\theta },e^{\mathrm{i}( \theta-\varepsilon) }\big) \big) =\sigma
K\big( \big( x_{1},\ldots,x_{N-2},e^{\mathrm{i} ( \theta -\varepsilon ) },e^{\mathrm{i}\theta}\big) \big) \sigma$.

\section{Bounds}\label{bnds}

In this section we derive bounds on $L(x) $ of global and local type. Throughout we adopt the normalization $L(x_{0}) =I$. The operator norm on $n_{\tau}\times n_{\tau}$ complex matrices is def\/ined by $\Vert M\Vert =\sup\{ \vert Mv\vert \colon \vert v \vert =1 \} $.

\begin{Theorem}\label{Lbnd}There is a constant $c$ depending on $\kappa$ such that $\Vert L(x) \Vert \leq c\prod\limits_{1\leq i<j\leq N}\vert x_{i}-x_{j}\vert ^{-\vert \kappa\vert }$ for each $x\in\mathbb{T}_{\rm reg}^{N}$.
\end{Theorem}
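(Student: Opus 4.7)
The plan is to reduce to the fundamental chamber $\mathcal{C}_0$ using Proposition~\ref{L(xw)M}, then integrate the differential system (\ref{Lsys}) along a linear path in angular coordinates and apply a Gronwall-type bound. By that proposition, $L(xw) = M(w,x) L(x) \tau(w)$ with $M(w,x) = \upsilon^{-m}$ unitary and $\tau(w)$ orthogonal, so $\Vert L(xw)\Vert = \Vert L(x)\Vert$; combined with the $\mathcal{S}_N$-invariance of $\prod_{i<j}\vert x_i - x_j\vert$, this reduces the problem to bounding $\Vert L(x)\Vert$ for $x \in \mathcal{C}_0$. Write $x_k = e^{\mathrm{i}\theta_k}$ with $\theta_1 < \cdots < \theta_N < \theta_1 + 2\pi$, set $\theta_k^{(0)} := 2\pi(k-1)/N$ (the coordinates of $x_0$), choose representatives with $\vert \theta_k - \theta_k^{(0)}\vert$ uniformly bounded, and take the linear path $\theta_k(t) := (1-t)\theta_k^{(0)} + t\theta_k$. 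Because the ordering constraint is convex, $\gamma(t) := (e^{\mathrm{i}\theta_1(t)},\ldots,e^{\mathrm{i}\theta_N(t)})$ stays in $\mathcal{C}_0$ for all $t \in [0,1]$.

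Along $\gamma$, the function $L(\gamma(t))$ satisfies $\frac{d}{dt}L(\gamma(t)) = \kappa L(\gamma(t)) B(t)$ with $B(t) := \sum_i A_i(\gamma(t))\gamma_i'(t)$, so the matrix Gronwall inequality will give $\Vert L(\gamma(1))\Vert \leq \exp\big(\vert \kappa\vert \int_0^1\Vert B(t)\Vert\,dt\big)$. Antisymmetrizing in $i,j$,
\[ B(t) = \sum_{i<j}\tau((i,j))\frac{\gamma_i'(t) - \gamma_j'(t)}{x_i(t) - x_j(t)} \;-\; \gamma\Big(\sum_i \frac{\gamma_i'(t)}{x_i(t)}\Big)I, \]
and the second summand is uniformly bounded since $\vert x_k(t)\vert = 1$. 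Since $\gamma_k'(t) = \mathrm{i}(\theta_k - \theta_k^{(0)})x_k(t)$ with coefficient constant in $t$, the elementary estimate $\vert \gamma_i'(t) - \gamma_j'(t)\vert \leq \vert \theta_i - \theta_i^{(0)}\vert \cdot \vert x_i(t) - x_j(t)\vert + \vert (\theta_i - \theta_i^{(0)}) - (\theta_j - \theta_j^{(0)})\vert$ splits each pair contribution into a bounded part plus a multiple of $\vert x_i(t) - x_j(t)\vert^{-1}$.

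The remaining work is to control $\int_0^1 dt/\vert x_i(t) - x_j(t)\vert$, where $\vert x_i(t) - x_j(t)\vert = 2\sin(\psi_{ij}(t)/2)$ and $\psi_{ij}(t) := \theta_j(t) - \theta_i(t)$ is affine in $t$ with values in $(0,2\pi)$. The substitution $u = \psi_{ij}(t)$ evaluates this integral to $\vert \log\tan(\psi_{ij}(1)/4) - \log\tan(\psi_{ij}(0)/4)\vert /\vert \psi_{ij}(1) - \psi_{ij}(0)\vert$ (and is trivially bounded when $\psi_{ij}$ is constant along the path). Multiplying by the coefficient $\vert (\theta_i - \theta_i^{(0)}) - (\theta_j - \theta_j^{(0)})\vert = \vert \psi_{ij}(1) - \psi_{ij}(0)\vert$ cancels the denominator exactly, yielding $\vert \log\vert x_i(1) - x_j(1)\vert \vert + O(1)$ as $\psi_{ij}(1) \to 0$ or $\psi_{ij}(1) \to 2\pi$. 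Summing these contributions over $i<j$ and exponentiating produces the asserted bound $\Vert L(x)\Vert \leq c\prod_{i<j}\vert x_i - x_j\vert^{-\vert \kappa\vert}$.

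The main obstacle I anticipate is the delicate matching between the numerator coefficient $\vert \theta_i'(0) - \theta_j'(0)\vert$ and the denominator $\vert \psi_{ij}(1) - \psi_{ij}(0)\vert$ that allows the divergent $1/\vert x_i - x_j\vert$ integral to pair with a vanishing coefficient and leave a pure logarithm; without this exact cancellation the exponent would not come out to $-\vert \kappa\vert$ but to some larger multiple. The linear interpolation in angle coordinates, rather than along the $x_i$ themselves, is precisely what secures this algebraic identity.
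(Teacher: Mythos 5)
Your proposal is correct and takes essentially the same route as the paper: reduce to $\mathcal{C}_{0}$ via the unitary covariance of $L$, run the linear path in angular coordinates from $x_{0}$, apply the matrix Gronwall bound (the paper's Lemma~\ref{bdsM}), and obtain a per-pair estimate $-\log\vert x_{i}-x_{j}\vert+O(1)$ whose exponential gives the product bound. The only difference is bookkeeping: the paper splits the integrand by the exact identity into $(g_{j}'-g_{i}')\cot\frac{g_{j}-g_{i}}{2}$ plus a bounded imaginary part and integrates to $\log\sin$, whereas you split the numerator by the triangle inequality and integrate the cosecant to $\log\tan(\psi/4)$; both yield the same logarithmic bound and the same exponent $-\vert\kappa\vert$.
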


The proof is a series of steps starting with a general result which applies to matrix functions satisfying a linear dif\/ferential equation in one variable.

\begin{Lemma}\label{bdsM}Suppose $M(0) =I$, $\frac{d}{dt}M(t)
=M(t) F(t) $ and $ \Vert F(t) \Vert \leq f(t) $ for $0\leq t\leq1$ then $ \Vert M(t) -I\Vert \leq\exp\int_{0}^{t}f(s)\mathrm{d}s-1$ and $\Vert M(1)\Vert \leq\exp\int_{0}^{1}f(s) \mathrm{d}s$.
\end{Lemma}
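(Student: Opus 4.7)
The plan is to convert the linear matrix ODE into an integral equation and then apply a standard Gronwall-type argument in the operator norm. Integrating the differential equation from $0$ to $t$ with the initial condition $M(0)=I$ gives
\begin{gather*}
M(t)=I+\int_{0}^{t}M(s)F(s)\,\mathrm{d}s.
\end{gather*}
Taking operator norms and using submultiplicativity together with the hypothesis $\Vert F(s)\Vert\le f(s)$ yields
\begin{gather*}
\Vert M(t)\Vert\le 1+\int_{0}^{t}\Vert M(s)\Vert f(s)\,\mathrm{d}s.
\end{gather*}

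Next I would invoke Gronwall's inequality in its integral form: any nonnegative continuous function $g$ satisfying $g(t)\le 1+\int_{0}^{t}g(s)f(s)\,\mathrm{d}s$ obeys $g(t)\le\exp\int_{0}^{t}f(s)\,\mathrm{d}s$. The quickest way to see this is to let $G(t):=\int_{0}^{t}\Vert M(s)\Vert f(s)\,\mathrm{d}s$; then $G'(t)=\Vert M(t)\Vert f(t)\le(1+G(t))f(t)$, so the function $\log(1+G(t))$ has derivative at most $f(t)$, and integration from $0$ gives $1+G(t)\le\exp\int_{0}^{t}f(s)\,\mathrm{d}s$. Combining this with $\Vert M(t)\Vert\le 1+G(t)$ produces the bound $\Vert M(t)\Vert\le\exp\int_{0}^{t}f(s)\,\mathrm{d}s$, which at $t=1$ is the second inequality of the lemma.

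Finally, for the first inequality I would return to the integral equation in the form $M(t)-I=\int_{0}^{t}M(s)F(s)\,\mathrm{d}s$, take norms, and substitute the bound just obtained:
\begin{gather*}
\Vert M(t)-I\Vert\le\int_{0}^{t}\Vert M(s)\Vert f(s)\,\mathrm{d}s\le\int_{0}^{t}f(s)\exp\!\left(\int_{0}^{s}f(r)\,\mathrm{d}r\right)\mathrm{d}s=\exp\!\left(\int_{0}^{t}f(s)\,\mathrm{d}s\right)-1,
\end{gather*}
since the integrand is an exact derivative. There is no real obstacle here; the only subtlety worth noting is that the argument uses the operator norm's submultiplicativity $\Vert M(s)F(s)\Vert\le\Vert M(s)\Vert\Vert F(s)\Vert$ and the fact that $\Vert M(t)\Vert$ is continuous, which justifies applying Gronwall in integral form without needing to differentiate $\Vert M(t)\Vert$ itself.
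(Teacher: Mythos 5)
Your proof is correct and is essentially the paper's argument: both pass to the integral equation $M(t)-I=\int_{0}^{t}M(s)F(s)\,\mathrm{d}s$ and run a Gronwall-type comparison, the paper inlining Gronwall via an auxiliary function $b(t)$ applied to $\ell(t)=\Vert M(t)-I\Vert$, while you apply the standard integral-form Gronwall to $\Vert M(t)\Vert$ and then substitute back. The resulting bounds and the level of rigor are the same, so there is nothing to add.
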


\begin{proof}[{Proof from \cite[Theorem~7.1.11]{Stoer/Bulirsch1980}}] Let $\ell(t) := \Vert M(t) -I \Vert $ then the equation $M (t) -I=\int_{0}^{t}M(s) F(s) \mathrm{d}s$ and the inequalities $\Vert M(t) \Vert \leq\Vert M(t) -I \Vert + \Vert I \Vert $ (and $ \Vert I\Vert =1$) imply that $\ell(t) \leq\int_{0}^{t}(\ell(s) +1) f(s) \mathrm{d}s$. Def\/ine dif\/ferentiable functions~$b(t) $ and~$h(t) $ by
\begin{gather*}
h(t) :=\exp\int_{0}^{t}f(s) \mathrm{d}s,\\
b(t) h(t) =\int_{0}^{t}( \ell (s) +1) f(s) \mathrm{d}s+1.
\end{gather*}
Apply $\frac{d}{dt}$to the latter equation:%
\begin{gather*}
b^{\prime}(t) h(t) +b(t) f (t) h(t) =( \ell(t) +1)f(t) ,\\
b^{\prime}(t) h(t) =f(t)\left\{ \ell(t) +1-\int_{0}^{t} ( \ell(s)+1) f(s) \mathrm{d}s-1\right\} \\
 \hphantom{b^{\prime}(t) h(t)}{} =f(t) \left\{ \ell(t) -\int_{0}^{t}(\ell(s) +1) f(s) \mathrm{d}s\right\}\leq0.
\end{gather*}
Hence $b^{\prime}(t) \leq0$ and $b(t) \leq b(0) =1$ which implies
\begin{gather*}
\ell(t) \leq\int_{0}^{t} ( \ell(s) +1 ) f(s) \mathrm{d}s=b(t) h(t) -1\leq h(t) -1.
\end{gather*}
Finally $ \Vert M(1) \Vert \leq$ $ \Vert M ( 1 ) -I \Vert +1\leq\exp\int_{0}^{1}f(s) \mathrm{d}s$.
\end{proof}

Next we set up a dif\/ferentiable path $p(t) = ( p_{1} ( t ) ,\ldots,p_{N}(t)) $ in $\mathbb{C}_{\rm reg}^{N}$ starting at $x_{0}$ and obtain the equation
\begin{gather*}
\frac{\mathrm{d}}{\mathrm{d}t}L(p(t)) =\kappa L( p(t)) \sum_{i=1}^{N}\left\{ \sum_{j\neq i}\frac{p_{i}^{\prime}(t) }{p_{i}(t) -p_{j}(t) }
\tau((i,j)) -\gamma\frac{p_{i}^{\prime}(t) }{p_{i}(t) }I\right\} \\
\hphantom{\frac{\mathrm{d}}{\mathrm{d}t}L(p(t))}{} =\kappa L(p(t)) \left\{ \sum_{1\leq i<j\leq
N}\frac{p_{j}^{\prime}(t) -p_{i}^{\prime}(t)}{p_{j}(t) -p_{i}(t) }\tau ( (i,j)) -\gamma\sum_{i=1}^{N}\frac{p_{i}^{\prime} (t) }{p_{i}(t) }I\right\} .
\end{gather*}
Suppose $x=\big( e^{\mathrm{i}\theta_{1}},\ldots,e^{\mathrm{i}\theta_{N}}\big) \in\mathcal{C}_{0}$ and $\theta_{1}<\theta_{2}<\cdots<\theta
_{N}<\theta_{1}+2\pi$. Def\/ine the path $p(t) =\big( e^{\mathrm{i}g_{1}(t) },\ldots,e^{\mathrm{i}g_{N}(t) }\big) $ where $g_{j}(t) =(1-t)
\frac{2(j-1) \pi}{N}+t\theta_{j}$ for $1\leq j\leq N$. Then $p(t) \in\mathcal{C}_{0}$ for $0\leq t\leq1$ because $g_{i+1}(t) -g_{i}(t) =(1-t)
\frac{2\pi}{N}+t ( \theta_{i+1}-\theta_{i} ) >0$ for $1\leq i<N$ and $2\pi+g_{1}(t) -g_{N}(t) =2\pi+t\theta_{1}-(1-t) \frac{2(N-1) \pi}{N}-t\theta
_{N}=(1-t) \frac{2\pi}{N}+t ( 2\pi+\theta_{1}-\theta _{N}) >0$. The factor of $\tau((i,j)) $ in the equation is
\begin{gather*}
\mathrm{i}\frac{g_{j}^{\prime}(t) e^{\mathrm{i}g_{j} (t) }-g_{i}^{\prime}(t) e^{\mathrm{i}g_{i}(t) }}{e^{\mathrm{i}g_{j}(t) }-e^{\mathrm{i}g_{i}(t) }}=
\frac{1}{2}\left\{( g_{j}^{\prime}(t) -g_{i}^{\prime}(t)) \frac{\cos\big( \frac{1}{2}(g_{j}(t) -g_{i}(t) ) \big) }{\sin\big(
\frac{1}{2}( g_{j}(t) -g_{i}(t) )\big) }+\mathrm{i} ( g_{j}^{\prime}(t) +g_{i}^{\prime
}(t) ) \right\} .
\end{gather*}
We will apply Lemma \ref{bdsM} to $\widetilde{L}(x) =\prod\limits_{j=1}^{N}x_{j}^{\gamma\kappa}L(x) $; this only changes the phase and removes the $\sum\limits_{i=1}^{N}\frac{p_{i}^{\prime} (t) }{p_{i}(t) }$ term. In the notation of Lemma~\ref{bdsM}
\begin{gather*}
f(t) =\vert \kappa\vert \sum\limits_{i<j}\left\vert \mathrm{i}\frac{g_{j}^{\prime}(t) e^{\mathrm{i}g_{j}(t) }-g_{i}^{\prime}(t)
e^{\mathrm{i}g_{i}(t) }}{e^{\mathrm{i}g_{j}(t)}-e^{\mathrm{i}g_{i}(t) }}\right\vert
\end{gather*} (since $\Vert\tau((i,j))\Vert =1$). To set up the
integral
$\int_{0}^{1}f(t) \mathrm{d}t$ let \begin{gather*}\phi_{ij} (t) =\frac{1}{2}( g_{j}(t) -g_{i}(t)) =\frac{1}{2}\left\{ (1-t) \frac{2(j-i)
\pi}{N}+t(\theta_{j}-\theta_{i}) \right\}
\end{gather*} so that $\phi
_{ij}^{^{\prime}}(t) =\frac{1}{2}\big( \theta_{j}-\theta_{i}+\frac{2(j-i) \pi}{N}\big) $ and $0<\phi_{ij}(t) <\pi$ for $i<j$ and $0\leq t\leq1$. The terms $\vert \mathrm{i} ( g_{j}^{\prime}(t) +g_{i}^{\prime}(t) )\vert \leq 4\pi$ provide a simple bound (no singularities of\/f $\mathbb{T}_{\rm reg}^{N}$). The dominant terms come from $\int_{0}^{1}\frac{\vert \phi_{ij}^{\prime}\cos\phi_{ij}(t)\vert }{\sin\phi_{ij}(t) }\mathrm{d}t$. There are two cases. Let $\phi_{0},\phi_{1}$ satisfy $0<\phi_{0},\phi_{1}<\pi$ and let $\phi(t) =(1-t) \phi_{0}+t\phi_{1}$. The antiderivative $\int\frac{\phi^{\prime}\cos\phi(t) }{\sin \phi(t) }\mathrm{d}t=\log\sin\phi(t) $. The f\/irst case applies when either $0<\phi_{0},\phi_{1}\leq\frac{\pi}{2}$ or $\frac{\pi
}{2}\leq\phi_{0},\phi_{1}<\pi$ (assign $\phi_{0}=\frac{\pi}{2}=\phi_{1}$ to the f\/irst interval); then $\phi^{\prime}\cos\phi(t) \geq0$ if $0<\phi_{0}\leq\phi_{1}\leq\frac{\pi}{2}$ or $\frac{\pi}{2}\leq\phi_{1}\leq\phi_{0}<\pi$ and $\phi^{\prime}\cos\phi(t) <0$ otherwise. These imply
\begin{gather*}
\int_{0}^{1}\frac{ \vert \phi^{\prime}\cos\phi(t)\vert }{\sin\phi(t) }\mathrm{d}t=\left\vert \log\frac{\sin\phi_{1}}{\sin\phi_{0}}\right\vert .
\end{gather*}
The second case applies when either $0<\phi_{0}<\frac{\pi}{2}<\phi_{1}<\pi$ (thus $\phi^{\prime}>0$) or $0<\phi_{1}<\frac{\pi}{2}<\phi_{0}<\pi$. Let
$\phi(t_{0}) =\frac{\pi}{2}$ (that is, $t_{0}=\frac{\pi /2-\phi_{0}}{\phi_{1}-\phi_{0}})$. In the f\/irst situation
\begin{gather*}
\int_{0}^{1}\frac{\vert \phi^{\prime}\cos\phi(t)\vert }{\sin\phi(t) }\mathrm{d}t =\int_{0}^{t_{0}}
\frac{\phi^{\prime}\cos\phi(t) }{\sin\phi(t)}\mathrm{d}t-\int_{t_{0}}^{1}\frac{\phi^{\prime}\cos\phi(t)
}{\sin\phi(t) }\mathrm{d}t =-\log\sin\phi_{0}-\log\sin\phi_{1},
\end{gather*}
since $\log\sin\frac{\pi}{2}=0$; and the same value holds for the second situation. We obtain
\begin{gather*}
\int_{0}^{1}f(t) \mathrm{d}t\leq\vert \kappa\vert \sum_{1\leq i<j\leq N}\left\{ -\log\sin\frac{\theta_{j}-\theta_{i}}{2}-\log\sin\frac{(j-i) \pi}{N}+4\pi\right\} .
\end{gather*}
Taking exponentials and using the lemma (recall $\big\vert e^{\mathrm{i}\phi_{1}}-e^{\mathrm{i}\phi_{2}}\big\vert =2\sin\big\vert \frac{\phi
_{1}-\phi_{2}}{2}\big\vert $) we obtain%
\begin{gather*}
\Vert L(x)\Vert \leq c\prod\limits_{1\leq i<j\leq N}\vert x_{i}-x_{j}\vert ^{-\vert \kappa\vert }.
\end{gather*}
This bound applies to all of $\mathbb{T}_{\rm reg}^{N}$ when $L ( x_{0}) $ commutes with $\upsilon$ and $L$ is extended to $\mathbb{T}_{\rm reg}^{N}$ as in Def\/inition~\ref{DefL(x)T}. This completes the proof of Theorem~\ref{Lbnd}.

Next we f\/ind bounds on the series expansion from (\ref{Lzseries})
\begin{gather*}
L_{1}(x) =\left( \big( u^{2}-z^{2}\big) \prod_{j=1}^{N-2}x_{j}\right) ^{-\gamma\kappa}\rho\big( z^{-\kappa},z^{\kappa}\big)
\sum_{n=0}^{\infty}\alpha_{n}(x(u,0)) z^{n},
\end{gather*}
where $\vert z\vert <\delta_{0}:=\min\limits_{1\leq j\leq N-2}\left\vert u-x_{j}\right\vert $ and $\operatorname{Im}\frac{z}{u}>0$. Recall the recurrence~(\ref{recurS})
\begin{gather*}
S_{n} :=\sum_{i=0}^{n-1}\alpha_{n-1-i}\big\{ (-1) ^{i}\beta_{i}-\sigma\beta_{i}\sigma\big\} , \qquad \beta_{i}:=\sum_{j=0}^{N-2}\frac{\tau((j,N)) }{(u-x_{j}) ^{i+1}},\\
\alpha_{2n}(x(u,0)) =\frac{\kappa}{2n}S_{2n},\\
\alpha_{2n+1}(x(u,0)) =\rho\left(\frac{\kappa}{2n+1-2\kappa},\frac{\kappa}{2n+1+2\kappa}\right) S_{2n+1}.
\end{gather*}

\begin{Proposition}Suppose $\vert \kappa\vert \leq\kappa_{0}<\frac{1}{2}$ and $\lambda:=( N-2) \kappa_{0}$ then for $n\geq0$
\begin{gather}
\Vert \alpha_{2n}(x(u,0)) \Vert \leq \Vert \alpha_{0}(x(u,0)) \Vert \frac{(\lambda) _{n}\big( \lambda+\frac{1}{2}-\kappa
_{0}\big) _{n}}{n!\big( \frac{1}{2}-\kappa_{0}\big) _{n}}\delta _{0}^{-2n},\label{bndan}\\
 \Vert \alpha_{2n+1}(x(u,0)) \Vert \leq \Vert \alpha_{0}(x(u,0)) \Vert \frac{(\lambda) _{n+1}\big( \lambda+\frac{1}{2}-\kappa
_{0}\big) _{n}}{n!\big( \frac{1}{2}-\kappa_{0}\big) _{n+1}}\delta _{0}^{-2n-1}.\nonumber
\end{gather}
\end{Proposition}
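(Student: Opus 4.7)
The plan is to induct on $m$ using the recurrences
\begin{gather*}
\alpha_{2n}=\frac{\kappa}{2n}S_{2n},\qquad \alpha_{2n+1}=\rho\!\left(\frac{\kappa}{2n+1-2\kappa},\frac{\kappa}{2n+1+2\kappa}\right)S_{2n+1},\\
S_m=\sum_{i=0}^{m-1}\alpha_{m-1-i}\big\{(-1)^i\beta_i-\sigma\beta_i\sigma\big\},
\end{gather*}
combined with the elementary estimate $\|(-1)^i\beta_i-\sigma\beta_i\sigma\|\le 2(N-2)/\delta_0^{i+1}$, which follows from orthogonality of $\tau$ (so $\|\tau((j,N))\|=1$, $\|\sigma\|=1$) and the defining bound on $\vert u-x_j\vert$. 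Setting $\mu_{2n}:=2n$ and $\mu_{2n+1}:=2n+1-2\kappa_0$, the constraint $|\kappa|\le\kappa_0<1/2$ gives $\|\rho(\kappa/(2n+1-2\kappa),\kappa/(2n+1+2\kappa))\|\le\kappa_0/\mu_{2n+1}$, so the recurrences yield the inductive estimate
\begin{gather*}
\|\alpha_m\|\le\frac{\kappa_0}{\mu_m}\sum_{i=0}^{m-1}\|\alpha_{m-1-i}\|\cdot\frac{2(N-2)}{\delta_0^{i+1}}.
\end{gather*}

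The next step is to recast the target bounds so that this estimate reproduces them. Normalize by setting $u_m:=\|\alpha_0\|^{-1}\delta_0^{m}b_m$, where $b_m$ denotes the claimed right-hand side of~\eqref{bndan}. A direct computation of ratios of Pochhammer symbols gives
\begin{gather*}
\frac{u_m}{u_{m-1}}=\frac{\mu_{m-1}+2\lambda}{\mu_m},\qquad m\ge 1,
\end{gather*}
which rearranges to $\mu_m u_m-\mu_{m-1}u_{m-1}=2\lambda u_{m-1}$; telescoping from the base relation $\mu_1 u_1=2\lambda u_0$ yields the key identity
\begin{gather*}
\mu_m u_m=2\lambda\sum_{k=0}^{m-1}u_k.
\end{gather*}

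With this identity in hand the induction closes at equality. Assuming $\|\alpha_k\|\le\|\alpha_0\|\,u_k\,\delta_0^{-k}$ for all $k<m$ and using $(N-2)\kappa_0=\lambda$,
\begin{gather*}
\|\alpha_m\|\le\frac{2\lambda\,\|\alpha_0\|}{\mu_m\,\delta_0^m}\sum_{k=0}^{m-1}u_k=\frac{\|\alpha_0\|\,u_m}{\delta_0^m}=b_m,
\end{gather*}
which is exactly the bound in~\eqref{bndan}. The base case $m=0$ is trivial, and $m=1$ is precisely the relation $\mu_1 u_1=2\lambda u_0$.

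The only subtle step is recognizing the telescoping identity $\mu_m u_m=2\lambda\sum_{k=0}^{m-1}u_k$: the specific Pochhammer factors appearing in the statement are the ones that make the induction close with no slack. Once the ratio $u_m/u_{m-1}$ is worked out (a half-line computation in each parity), telescoping finishes the argument. The hypothesis $\kappa_0<1/2$ enters only to guarantee that every denominator $\mu_{2n+1}=2n+1-2\kappa_0$ and every factor $(1/2-\kappa_0)_n$ is positive and to control the norm of the $\rho$-factor.
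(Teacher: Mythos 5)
Your proof is correct and follows essentially the same route as the paper: both bound $\|S_m\|$ by $2(N-2)\sum_i\|\alpha_{m-1-i}\|\delta_0^{-i-1}$, absorb the $\rho$-factor via $|\kappa/(m\pm2\kappa)|\le\kappa_0/(m-2\kappa_0)$, and then verify that the stated Pochhammer expressions satisfy the resulting recurrence with equality. The only difference is bookkeeping: the paper converts the convolution bound into a two-term ratio recurrence through the auxiliary sums $v_n$, while you telescope the same ratio identity into $\mu_m u_m=2\lambda\sum_{k<m}u_k$ and close the induction directly, which is an equivalent manipulation.
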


\begin{proof} Suppose $n\geq1$ then $ \Vert S_{n} \Vert \leq\sum\limits_{i=0}^{n-1-i} \Vert \alpha_{n-1-i} \Vert ( 2N-4) \delta
_{0}^{-i-1}$ (since $ \Vert \tau((j,N-1))\Vert =1$). Furthermore, since $\big\vert \frac{\kappa}{n\pm2\kappa }\big\vert \leq\frac{\kappa_{0}}{n-2\kappa_{0}}$ for $n\geq2$, we f\/ind
\begin{gather*}
\Vert \alpha_{2n+1}\Vert \leq\frac{2\lambda}{2n+1-2\kappa_{0}}\sum\limits_{i=0}^{2n} \Vert \alpha_{2n-i}\Vert \delta_{0}^{-i-1},\\
\Vert \alpha_{2n}\Vert \leq\frac{\lambda}{n}\sum\limits_{i=0}^{2n-1} \Vert \alpha_{2n-1-i}\Vert \delta_{0}^{-i-1}.
\end{gather*}
To set up an inductive argument let $t_{n}$ denote the hypothetical bound on $\Vert \alpha_{n}(x(u,0))\Vert $ and set $v_{n}=\sum\limits_{i=0}^{n-1}t_{n-1-i}\delta_{0}^{-i-1}$; then $v_{n}=\delta_{0}^{-1}( t_{n-1}+v_{n-1}) $ for $n\geq2$. Setting $t_{2n}=\frac{\lambda}{n}v_{2n}$ and $t_{2n+1}=\frac{2\lambda}{2n+1-2\kappa_{0}}v_{2n+1}$ the recurrence relations become
\begin{gather*}
t_{2n} =\frac{\lambda}{n}\left( t_{2n-1}+\frac{2n-1-2\kappa_{0}}{2\lambda }t_{2n-1}\right) \delta_{0}^{-1}=\frac{2\lambda+2n-1-2\kappa_{0}}{2n}t_{2n-1}\delta_{0}^{-1},\\
t_{2n+1} =\frac{2\lambda}{2n+1-2\kappa_{0}}\left( t_{2n}+\frac{n}{\lambda }t_{2n}\right) \delta_{0}^{-1}=\frac{2\lambda+2n}{2n+1-2\kappa_{0}}t_{2n}\delta_{0}^{-1}.
\end{gather*}
Starting with $ \Vert \alpha_{1} \Vert \leq\frac{2\lambda}{1-2\kappa_{0}} \Vert \alpha_{0} \Vert \delta_{0}^{-1}=t_{1}$ the stated bounds are proved inductively.
\end{proof}

By use of Stirling's formula for $\frac{\Gamma ( n+a ) }{\Gamma(n+b) }\sim n^{a-b}$ we see that $t_{n}$ behaves like (a~multiple of) $n^{2\lambda-1}$ for large $n$. Also there is a constant~$c^{\prime}$ depending on $N$ and~$\kappa_{0}$ such that
\begin{gather}
\sum_{n=2}^{\infty}\Vert \alpha_{n}(x(u,0)) \Vert \vert z\vert ^{n}\leq c^{\prime}\Vert \alpha_{0}(x(u,0)) \Vert \left( \frac{\vert z\vert }{\delta_{0}}\right) ^{2}\left( 1-\frac{\vert z\vert }{\delta_{0}}\right) ^{-2\lambda-2}.\label{bndan2z}
\end{gather}

We also need to analyze the ef\/fect of small changes in $u$. Fix a point $x( \widetilde{u},0) $ and consider series expansions of $\alpha_{n}( x( \widetilde{u},0)) $ in powers of $ ( u-\widetilde{u} ) $. Let $\delta_{1}:=\min\limits_{1\leq j\leq N-2}\vert \widetilde{u}-x_{j}\vert $. Recall equation~(\ref{dua0x})
\begin{gather*}
\partial_{u}\alpha_{0}(x(u,0)) =\kappa\alpha _{0}(x(u,0)) \{ \beta_{0}( x ( u,0) ) +\sigma\beta_{0}(x(u,0))\sigma\} ,
\end{gather*}
and solve this in the form
\begin{gather*}
\alpha_{0}(x(u,0)) =\sum_{n=0}^{\infty}\alpha_{0,n} ( x ( \widetilde{u},0 ) ) (u-\widetilde{u}) ^{n}.
\end{gather*}
This leads to the recurrence (suppressing the arguments $x(\widetilde{u},0) $)
\begin{gather*}
\sum_{n=1}^{\infty}n\alpha_{0,n} ( u-\widetilde{u} ) ^{n-1} \\
\qquad{} =\kappa\sum_{n=0}^{\infty}\alpha_{0,n} ( u-\widetilde{u} ) ^{n}
 \sum_{m=0}^{\infty}(-1) ^{m} ( u-\widetilde {u} ) ^{m}\sum_{j=0}^{N-2}\frac{\tau ( (j,N-1) ) +\tau((j,N)) }{( \widetilde{u}-x_{j}) ^{m+1}},\\
( n+1) \alpha_{0,n+1} =\kappa\sum_{m=0}^{n}\alpha_{0,n-m}\widetilde{\beta}_{m}( x( \widetilde{u},0) ),\\
\widetilde{\beta}_{m}( x( \widetilde{u},0) ):=(-1) ^{m}\sum_{j=0}^{N-2}\frac{\tau((j,N-1)) +\tau((j,N)) }{(\widetilde{u}-x_{j}) ^{m+1}}.
\end{gather*}
Thus $\Vert \widetilde{\beta}_{m}\Vert \leq\frac{2(N-2) }{\delta_{1}^{m+1}}$ and by a similar method as above we f\/ind
\begin{gather}
 \Vert \alpha_{0,n} ( x ( \widetilde{u},0 ))\Vert \leq\frac{( 2\lambda) _{n}}{n!}\delta_{1}^{-n}\Vert \alpha_{0}( x( \widetilde{u},0))\Vert, \label{a0nbd}
\end{gather}
where $\lambda=( N-2) \kappa_{0}$. From
\begin{gather*}
\alpha_{1}(x(u,0)) =\rho\left( \frac{\kappa}{1-2\kappa},\frac{\kappa}{1+2\kappa}\right) \alpha_{0}( x(u,0) ) \sum_{j=0}^{N-2}\frac{\tau( (j,N-1)
) -\tau((j,N)) }{(u-x_{j})}\\
\hphantom{\alpha_{1}(x(u,0))}{} =\rho\left( \frac{\kappa}{1-2\kappa},\frac{\kappa}{1+2\kappa}\right)\sum_{n=0}^{\infty}\alpha_{0,n} ( u-\widetilde{u} )^{n}\\
\hphantom{\alpha_{1}(x(u,0))=}{} \times\sum_{m=0}^{\infty}(-1) ^{m} ( u-\widetilde {u}) ^{m}\sum_{j=0}^{N-2}\frac{\tau( (j,N-1)) -\tau((j,N)) }{(\widetilde
{u}-x_{j}) ^{m+1}},
\end{gather*}
we can derive a recurrence for the coef\/f\/icients in $\alpha_{1}( x ( u,0)) =\sum\limits_{n=0}^{\infty}\alpha_{1,n}( x(\widetilde{u},0))( u-\widetilde{u}) ^{n}$. Also
\begin{gather*}
\Vert \alpha_{1,n}( x( \widetilde{u},0) )\Vert \leq\frac{2\lambda}{1-2\kappa_{0}}\Vert \alpha_{0}(x( \widetilde{u},0) )\Vert \sum_{j=0}^{n}\frac{(2\lambda) _{n-j}}{(n-j) !}\delta_{1}^{j-n}\delta_{1}^{-j-1}\\
\hphantom{\Vert \alpha_{1,n}( x( \widetilde{u},0) )\Vert}{}
 =\frac{2\lambda(2\lambda+1) _{n}}{( 1-2\kappa_{0}) n!}\delta_{1}^{-n-1}\Vert \alpha_{0}( x(\widetilde{u},0))\Vert ;
\end{gather*}
note $2\lambda(2\lambda+1) _{n}=(2\lambda) _{n+1}$.

Essentially we are setting up bounds on behavior of $L( x (u,z)) $ for points near $x( \widetilde{u},0) $ in terms of $ \Vert \alpha_{0} (x(\widetilde{u},0))\Vert $ which is handled by the global bound.

In the series
\begin{gather*}
\sum_{n=0}^{\infty}\alpha_{n}(x(u,0)) z^{n}=\sum_{m,n=0}^{\infty}\alpha_{n,m}( x( \widetilde{u},0)) z^{n}( u-\widetilde{u}) ^{m}\end{gather*}
the f\/irst order terms are%
\begin{gather*}
\alpha_{00}( x( \widetilde{u},0) ) +\alpha
_{0,1}( x( \widetilde{u},0)) ( u-\widetilde{u}) +\alpha_{1,0}( x( \widetilde{u},0)) z,
\end{gather*}
and the bounds (\ref{bndan}) for the omitted terms
\begin{gather}
\sum_{n=2}^{\infty}\Vert \alpha_{n}(x(u,0))\Vert \vert z\vert ^{n}\leq c^{\prime}\Vert \alpha_{0}(x(u,0))\Vert \left( \frac{\vert z\vert }{\delta_{0}}\right) ^{2}\left( 1-\frac{\vert z\vert}{\delta_{0}}\right) ^{-2\lambda-2},\nonumber\\
\sum\limits_{n=2}^{\infty}\Vert \alpha_{0,n}( x(\widetilde{u},0) ) \Vert \vert u-\widetilde{u}\vert ^{n} \leq(2\lambda) _{2}\left( \frac
{\vert u-\widetilde{u}\vert }{\delta_{1}}\right) ^{2}\left(
1-\frac{\vert u-\widetilde{u}\vert }{\delta_{1}}\right)
^{-2\lambda-2}\Vert \alpha_{0}( x( \widetilde{u},0))\Vert ,\label{dblseries}\\
\vert z\vert \sum_{n=1}^{\infty}\Vert \alpha_{1,n}(
x( \widetilde{u},0)) \Vert \vert
u-\widetilde{u}\vert ^{n} \leq\frac{(2\lambda) _{2}
}{1-2\kappa_{0}}\left( \frac{\vert z( u-\widetilde{u})
\vert }{\delta_{1}^{2}}\right) \left( 1-\frac{\vert
u-\widetilde{u}\vert }{\delta_{1}}\right) ^{-2\lambda-2}\Vert
\alpha_{0}( x( \widetilde{u},0))\Vert.\nonumber
\end{gather}
Note there is a dif\/ference between $\delta_{0}$ and $\delta_{1}$: $\delta _{0},\delta_{1}$ are the distances from the nearest $x_{j}$ ($1\leq j\leq
N-2)$ to $u,\widetilde{u}$ respectively; thus the double series converges in $\vert z\vert +\vert u-\widetilde{u}\vert <\delta_{1}$ because this implies $\vert z\vert <\delta_{1}- \vert u-\widetilde{u} \vert \leq\delta_{0}$, by the triangle inequality: $\delta_{1}\leq\vert u-\widetilde{u}\vert +\delta_{0}$ .

\section{Suf\/f\/icient condition for the inner product property}\label{suffco}

In this section we will use the series
\begin{gather*}
L_{1}(y,u-z,u+z) =\left( \prod\limits_{j=1}^{N}x_{j}\right) ^{-\gamma\kappa}\rho\big( z^{-\kappa},z^{\kappa}\big) \sum_{n=0}^{\infty
}\alpha_{n}(x(u,0)) z^{n},
\end{gather*}
normalized by $\alpha_{0}\big( x^{(0) }\big) =I$ where $x^{(0) }=\big( 1,\omega,\omega^{2},\ldots,\omega^{N-3},\omega^{-3/2},\omega^{-3/2}\big) $, $\omega=e^{2\pi\mathrm{i}/N}$. The hypothesis is that there exists a Hermitian matrix~$H$ such that $\upsilon H=H\upsilon$ (recall $\upsilon:=\tau(w_{0})$) and the matrix~$H_{1}$ def\/ined by
\begin{gather}
L_{1}(x) ^{\ast}H_{1}L_{1}(x) =L(x) ^{\ast}HL(x) \label{LHL}
\end{gather}
commutes with $\sigma=\tau(N-1,N) $ (recall $L (x_{0}) =I$). Setting $x=x_{0}$ we f\/ind that $H=L_{1}(x_{0}) ^{\ast}H_{1}L_{1}(x_{0}) $. The analogous
condition has to hold for each face of $\mathcal{C}_{0}$ and any such face can be obtained from $\{ x_{N-1}=x_{N}\} $ by applying $x\mapsto xw_{0}^{m}$ with suitable $m$. For notational simplicity we will work out only the $\{ x_{N-2}=x_{N-1}\} $ case. From the general relation $w(i,j) w^{-1}=(w(i),w(j))$ we obtain $w_{0}^{-1}(N-1,N) w_{0}=(N-2,N-1)$. A matrix $M$ commutes with $\tau(N-2,N-1)$ if and only if $\upsilon M\upsilon^{-1}$ commutes with $\sigma$. Let
\begin{gather*}
x^{\prime} =\big( x_{1}^{\prime},\ldots,x_{N-3}^{\prime},u-z,u+z,x_{N}^{\prime}\big) ,\\
x^{\prime}w_{0}^{-1} =\big( x_{N}^{\prime},x_{1}^{\prime},\ldots ,x_{N-3}^{\prime},u-z,u+z\big) =x,\\
L_{2}( x^{\prime}) :=\upsilon^{-1}L_{1}\big( x^{\prime}w_{0}^{-1}\big) \upsilon =\left( \prod\limits_{j=1}^{N}x_{j}\right) ^{-\gamma\kappa}\upsilon
^{-1}\rho\big( z^{-\kappa},z^{\kappa}\big) \sum_{n=0}^{\infty}\alpha_{n}(y,u) \upsilon z^{n}.
\end{gather*}
This is a solution of (\ref{Lsys}) by Proposition~\ref{L(xw)}. This has the analogous behavior to $L_{1}$; writing
\begin{gather*}
\upsilon^{-1}\rho\big( z^{-\kappa},z^{\kappa}\big) \alpha_{n} (y,u) \upsilon=\big\{ \upsilon^{-1}\rho\big( z^{-\kappa},z^{\kappa
}\big) \upsilon\big\} \big\{ \upsilon^{-1}\alpha_{n} (y,u) \upsilon\big\}
\end{gather*}
implies the relations
\begin{gather*}
\tau(N-2,N-1) \big\{ \upsilon^{-1}\rho\big( z^{-\kappa },z^{\kappa}\big) \upsilon\big\} =\big\{ \upsilon^{-1}\rho\big(
z^{-\kappa},z^{\kappa}\big) \upsilon\big\} \tau(N-2,N-1),\\
\tau(N-2,N-1) \big\{ \upsilon^{-1}\alpha_{n} (y,u) \upsilon\big\} \tau(N-2,N-1) = (-1) ^{n}\big\{ \upsilon^{-1}\alpha_{n}(y,u)
\upsilon\big\} ,\qquad n\geq0.
\end{gather*}
We claim that the Hermitian matrix $H_{2}$ def\/ined by
\begin{gather}
L_{2}(x) ^{\ast}H_{2}L_{2}(x) =L(x)^{\ast}HL(x) \label{L2H2L2}
\end{gather}
commutes with $\tau(N-2,N-1) $. There is a subtle change: the base point $x^{(0)}=\big( 1,\omega,\ldots,\omega^{N-2}$, $\omega^{-3/2},\omega^{-3/2}\big) $ is replaced by $\big( \omega ,\ldots,\omega^{N-2},\omega^{-3/2},\omega^{-3/2},1\big) $ and now $\omega x_{0}=\big( \omega,\ldots,\omega^{N-1},1\big) $ is in the domain of convergence of~$L_{2}$. Set $x=\omega x_{0}$ in~(\ref{L2H2L2}) to obtain
\begin{gather*}
L_{2}(\omega x_{0}) =\upsilon^{-1}L_{1}\big( \omega x_{0}w_{0}^{-1}\big) \upsilon=\upsilon^{-1}L_{1}(x_{0})\upsilon,\\
H_{2} =( L_{2}(\omega x_{0}) ^{\ast})^{-1}HL_{2}(\omega x_{0}) ^{-1}=\upsilon^{-1}(L_{1}(x_{0}) ^{\ast}) ^{-1}\upsilon H\upsilon^{-1}L_{1}(x_{0}) ^{-1}\upsilon\\
\hphantom{H_{2}}{} =\upsilon^{-1} ( L_{1}(x_{0}) ^{\ast} )^{-1}HL_{1}(x_{0}) ^{-1}\upsilon=\upsilon^{-1}H_{1}\upsilon,
\end{gather*}
because $H$ commutes with $\upsilon$ (and $L(\omega x_{0}) =L(x_{0}) =I$ by the homogeneity). Thus $H_{2}$ commutes with $\tau(N-2,N-1) $.

From Theorem \ref{Lbnd} we have the bound
\begin{gather*}
\Vert L(x) ^{\ast}HL(x)\Vert \leq c\prod\limits_{i<j}\vert x_{i}-x_{j} \vert ^{-2\vert \kappa\vert }.
 \end{gather*}
 Denote $K(x) =L(x) ^{\ast}HL(x) $. We will show that there is an interval $-\kappa_{1}<\kappa<\kappa_{1}$ where $\kappa_{1}$ depends on $N$ such that
\begin{gather*}
\int_{\mathbb{T}^{N}}\big\{ ( x_{N}\mathcal{D}_{N}f ) ^{\ast}(x) K(x) g(x) -f^{\ast} (x t) K(x) x_{N}\mathcal{D}_{N}g(x) \big\}\mathrm{d}m(x) =0,
\end{gather*}
for each $f,g\in C^{1}\big( \mathbb{T}^{N};V_{\tau}\big) $. Consider the Haar measure of $\big\{ x\colon \min\limits_{i<j}\vert x_{i}-x_{j}\vert
<\varepsilon\big\} $; let $\sin\frac{\varepsilon^{\prime}}{2} =\frac{\varepsilon}{2}$ and $i<j$ then $m\big\{ x\colon \vert x_{i}-x_{j}\vert \leq\varepsilon\big\} =\frac{1}{\pi}\varepsilon^{\prime}$, thus $m\big\{ x\colon \min\limits_{i<j}\vert x_{i}-x_{j}\vert <\varepsilon\big\} \leq \binom{N}{2}\frac{\varepsilon^{\prime}}{\pi}$. The integral is broken up into three pieces. The aim is to let $\delta \rightarrow0$; where $\delta$ satisf\/ies an upper bound $\delta<\min\big(\big( 2\sin\frac{\pi}{N}\big) ^{2},\frac{1}{9}\big)$; the f\/irst term comes from the maximum spacing of~$N$ points on~$\mathbb{T}$ and the second is equivalent to $3\delta<\delta^{1/2}$. Also $\delta^{\prime}:=2\arcsin\frac{\delta}{2}$.
\begin{enumerate}\itemsep=0pt
\item $\min\limits_{i<j}\vert x_{i}-x_{j}\vert <\delta$, done with the integrability of $\prod\limits_{i<j}\vert x_{i}-x_{j}\vert
^{-2\vert \kappa\vert }$ for $\vert \kappa\vert <\frac{1}{N}$ (from the Selberg integral $\int_{\mathbb{T}^{N}}\prod \limits_{i<j}\vert x_{i}-x_{j}\vert ^{-2 \vert \kappa \vert }\mathrm{d}m(x) =\frac{\Gamma 1-N \vert \kappa\vert) }{\Gamma( 1-\vert \kappa\vert) ^{N}}$), and the measure of the set is~$O( \delta) $. The limit as $\delta\rightarrow0$ is zero by the dominated convergence theorem.

\item $\delta\leq\min\limits_{1\leq i<j\leq N-1}\vert x_{i}-x_{j}\vert <\delta^{1/2}$ and $\delta\leq\min\limits_{1\leq i\leq N-1}\vert x_{i}-x_{N}\vert $; this case uses the same bound on $K$ and the $\mathbb{T}^{N-1}$-Haar measure of $\big\{ x\in \mathbb{T}^{N-1} \colon \min\limits_{1\leq i<j\leq N-1} \vert x_{i} -x_{j} \vert <\delta^{1/2}\big\} ;$

\item $\min\limits_{1\leq i<j\leq N-1}\vert x_{i}-x_{j}\vert \geq\delta^{1/2}$ and $\min\limits_{1\leq i\leq N-1}\vert x_{i}-x_{N}\vert \geq\delta$. This is done with a detailed analysis using the double series from~(\ref{dblseries}).
\end{enumerate}

The total of parts (2) and (3), that is, the integral over $\Omega_{\delta}$, equals
\begin{gather*}
\int_{\Omega_{\delta}}x_{N}\partial_{N}\{ f(x)^{\ast}K(x) g(x)\} \mathrm{d}m (x).
\end{gather*}
We use the coordinates $x_{j}=e^{\mathrm{i}\theta_{j}}$, $1\leq j\leq N$; thus $x_{N}\partial_{N}=-\mathrm{i}\frac{\partial}{\partial
\theta_{N}}$. For f\/ixed $( \theta_{1},\ldots,\theta_{N-1}) $ the condition $x\in\Omega_{\delta}$ implies that the set of $\theta_{N}$-values is a~union of disjoint closed intervals (it is possible there is only one, in the extreme case $\theta_{j}=j\delta^{\prime}$ for $1\leq j\leq N-1$ the interval is $N\delta^{\prime}\leq\theta_{N}\leq2\pi$). In case (2) the $\theta_{N}$-integration results in a sum of terms $( f^{\ast}Kg) \big(e^{\mathrm{i}\theta_{1}}, \ldots,e^{\mathrm{i}\theta_{N-1}},e^{\mathrm{i}\phi }\big) $ with coef\/f\/icients $\pm1$ where $\min\limits_{1\leq i\leq N-1} \vert \phi-\theta_{i} \vert =\delta$. Each such sum is bounded by $2(N-1) c\Vert f\Vert _{\infty}\Vert g \Vert _{\infty}\delta^{-N(N-1) \vert \kappa \vert }$, because $\prod\limits_{i<j}\vert x_{i}-x_{j}\vert \geq\delta^{N(N-1) /2}$ on $\Omega_{\delta}$. Thus the integral for part~(2) is bounded by
\begin{gather*}
2(N-1)\Vert f\Vert _{\infty}\Vert g\Vert _{\infty}\delta^{-N(N-1) \vert \kappa\vert }\binom{N-1}{2}\left( 2\arcsin\frac{\delta^{1/2}}{2}\right) \leq c^{\prime
}\delta^{1/2-N(N-1) \vert \kappa\vert },
\end{gather*}
for some f\/inite constant $c^{\prime}$ (depending on $f$, $g$). This term tends to zero as $\delta\rightarrow0$ if $\vert \kappa\vert <\frac {1}{2N(N-1) }$.

In part (3) the intervals $[ \theta_{i}-\delta^{\prime},\theta_{i}+\delta^{\prime}] $ are pairwise disjoint because $\vert\theta_{i}-\theta_{j}\vert \geq3\delta^{\prime}$ for $i\neq j$ (recall $\sqrt{\delta}>3\delta$). To simplify the notation assume $\theta_{1}<\theta_{2}<\cdots$ (the other cases follow from the group invariance of the setup). Then the $\theta_{N}$-integration yields
\begin{gather*}
(2\pi) ^{1-N}\sum_{j=1}^{N-1}\int_{R_{\delta}}\left\{
\begin{matrix}
( f^{\ast}Kg) \big( e^{\mathrm{i}\theta_{1}},\ldots
,e^{\mathrm{i}\theta_{j}},\ldots,e^{\mathrm{i} ( \theta_{j}-\delta^{\prime} ) }\big) \\
- ( f^{\ast}Kg ) \big( e^{\mathrm{i}\theta_{1}},\ldots,e^{\mathrm{i}\theta_{j}},\ldots,e^{\mathrm{i} ( \theta_{j}+\delta^{\prime} ) }\big)
\end{matrix}
\right\} \mathrm{d}\theta_{1}\cdots\mathrm{d}\theta_{N-1},
\end{gather*}
where $R_{\delta}:=\big\{ ( \theta_{1},\ldots,\theta_{N-1} ) :\theta_{1}<\theta_{2}<\cdots<\theta_{N-1}<\theta_{1}+2\pi,\min \big\vert
e^{\mathrm{i}\theta_{j}}-e^{\mathrm{i}\theta_{k}}\big\vert \geq\sqrt{\delta }\big\}$. It suf\/f\/ices to deal with the term with $j=N-1$; this allows the
use of the double series. It is fairly easy to show that $ ( f^{\ast}Kg) \big( e^{\mathrm{i}\theta_{1}},\ldots,e^{\mathrm{i}\theta_{N-1}},e^{\mathrm{i}( \theta_{N-1}-\delta^{\prime}) }\big) - (f^{\ast}Kg) \big( e^{\mathrm{i}\theta_{1}},\ldots,e^{\mathrm{i}\theta_{N-1}},e^{\mathrm{i}( \theta_{N-1}+\delta^{\prime})}\big) $ tends to zero with $\delta$ but this is not enough to control the integral. The idea is to show that
\begin{gather*}
 \big\vert ( f^{\ast}Kg ) \big( e^{\mathrm{i}\theta_{1}},\ldots,e^{\mathrm{i}\theta_{N-1}},e^{\mathrm{i} ( \theta_{N-1}-\delta^{\prime} ) }\big) - (f^{\ast}Kg) \big(e^{\mathrm{i}\theta_{1}},\ldots,e^{\mathrm{i}\theta_{N-1}},e^{\mathrm{i} ( \theta_{N-1}+\delta^{\prime} ) }\big) \big\vert \\
\qquad{} \leq c''\delta^{1/2-2\vert \kappa\vert }\big\vert (f^{\ast}Kg) \big( e^{\mathrm{i}\theta_{1}},\ldots,e^{\mathrm{i}\theta_{N-1}},e^{\mathrm{i} ( \theta_{N-1}+\delta^{\prime} )}\big) \big\vert
\end{gather*}
for some constant $c''$. This can then be bounded using the $ \Vert K\Vert $ bound for suf\/f\/iciently small $\vert \kappa\vert $. Fix $y=\big( e^{\mathrm{i}\theta_{1}},\ldots,e^{\mathrm{i}\theta_{N-2}}\big) $ and let $x ( y,u-v,u+v ) $ denote $\big(e^{\mathrm{i}\theta_{1}},\ldots,e^{\mathrm{i}\theta_{N-2}},u-z,u+z\big)$. We will use the form $K=L_{1}^{\ast}H_{1}L_{1}$ from~(\ref{LHL}) with two
pairs of values along with $\widetilde{u}=e^{\mathrm{i}\theta_{N-1}}$, and set $\zeta=e^{\mathrm{i\delta}^{\prime}}$
\begin{enumerate}\itemsep=0pt
\item[1)] $\eta^{(1) }=x ( y,u_{1}-z_{1},u_{1}+z_{1} ) =x\big( y,e^{\mathrm{i}\theta_{N-1}},e^{\mathrm{i} ( \theta
_{N-1}+\delta^{\prime} ) }\big) $, then $u_{1}=\frac{1}{2}\widetilde{u} ( 1+\zeta ) $, $z_{1}=\frac{1}{2}\widetilde{u} (\zeta-1 ) $, $u_{1}-\widetilde{u}=z_{1}$, $ \vert z_{1} \vert =\delta$,

\item[2)] $\eta^{(2) }=x ( y,u_{2}-z_{2},u_{2}+z_{2} ) =x\big( y,e^{\mathrm{i} ( \theta_{N-1}-\delta^{\prime} ) },e^{\mathrm{i}\theta_{N-1}}\big) $, then $u_{2}=\frac{1}{2}\widetilde {u}\big( 1+\zeta^{-1}\big) $, $z_{2}=\frac{1}{2}\widetilde{u}\big(1-\zeta^{-1}\big) =\zeta^{-1}z_{1}$, $u_{2}-\widetilde{u}=-z_{2}$, $ \vert z_{2} \vert =\delta$.
\end{enumerate}

Let $\eta^{(3) }\!=\!x\big( y,e^{\mathrm{i}\theta_{N-1}},e^{\mathrm{i} ( \theta_{N-1}-\delta^{\prime} ) }\big)\! =\!\eta^{(2) }(N-1,N) $, then by construction
$K\big( \eta^{(3) }\big) \!=\!\sigma K\big( \eta^{(2) }\big) \sigma$. We start by disposing of the $f$ and $g$ factors: by uniform continuous dif\/ferentiability there is a constant $c^{\prime \prime\prime}$ such that $\big\Vert f\big( \eta^{(1)}\big) -f\big( \eta^{(3) }\big) \big\Vert \leq c^{\prime\prime\prime}\delta^{\prime}$ and $\big\Vert g\big( \eta^{(1) }\big) -g\big( \eta^{(3) }\big) \big\Vert \leq c^{\prime\prime\prime}\delta^{\prime}$ (same constant for all of~$\mathbb{T}^{N}$). So the error made by assuming~$f$ and~$g$ are constant is bounded by $c^{\prime\prime\prime}\delta^{\prime}\big( \big\Vert K\big(\eta^{(1) }\big) \big\Vert +\big\Vert K\big(\eta^{(2) }\big) \big\Vert \big) $. The problem is reduced to bounding $K\big( \eta^{(1) }\big) -\sigma K\big( \eta^{(2) }\big) \sigma$. To add more detail about the ef\/fect of the $\ast$-operation on $u$ and $z$ we compute
\begin{gather*}
z^{\ast}=\frac{1}{2}\left( \frac{1}{u+z}-\frac{1}{u-z}\right) =-\frac{z}{u^{2}-z^{2}}, \qquad u^{\ast}=\frac{1}{2}\left( \frac{1}{u+z}+\frac{1}{u-z}\right)=\frac{u}{u^{2}-z^{2}}
\end{gather*} and if $u-z=e^{i\theta_{N-1}}$, $u+z=e^{i\theta_{N}}$ then
\begin{gather*}
z =\frac{1}{2}e^{\mathrm{i} ( \theta_{N-1}+\theta_{N} )
/2}\big( e^{\mathrm{i} ( \theta_{N}-\theta_{N-1} ) /2}-e^{\mathrm{i} ( \theta_{N-1}-\theta_{N} ) /2}\big)
=\mathrm{i}e^{\mathrm{i} ( \theta_{N-1}+\theta_{N} ) /2}\sin \frac{\theta_{N}-\theta_{N-1}}{2},\\
z^{\ast} =\mathrm{i}e^{-\mathrm{i} ( \theta_{N-1}+\theta_{N} )/2}\sin\frac{\theta_{N-1}-\theta_{N}}{2}=\overline{z},\\
u =\frac{1}{2}e^{\mathrm{i} ( \theta_{N-1}+\theta_{N} ) /2}\big( e^{\mathrm{i} ( \theta_{N}-\theta_{N-1} ) /2}+e^{\mathrm{i} ( \theta_{N-1}-\theta_{N} ) /2}\big) =e^{\mathrm{i}( \theta_{N-1}+\theta_{N}) /2}\cos\frac{\theta_{N}-\theta_{N-1}}{2},\\
u^{\ast} =e^{-\mathrm{i}( \theta_{N-1}+\theta_{N}) /2}\cos\frac{\theta_{N-1}-\theta_{N}}{2}=\overline{u};
\end{gather*}
the $\ast$-operation agrees with complex conjugate on the torus and $\rho( z^{-\kappa},z^{\kappa}) ^{\ast}=\rho( \overline{z}^{-\kappa},\overline{z}^{\kappa}) $. The reason for this is to emphasize that $L(x) ^{\ast}$ is an analytic function agreeing with the (Hermitian) adjoint of~$L(x)$. Thus
\begin{gather}
K\big( \eta^{(1) }\big) =\sum_{n,m=0}^{\infty}\alpha_{n}( x(u_{1},0)) ^{\ast}\rho\big(z_{1}^{-\kappa},z_{1}^{\kappa}\big) ^{\ast}H_{1}\rho\big( z_{1}^{-\kappa
},z_{1}^{\kappa}\big) \alpha_{m} ( x(u_{1},0) ) ( z_{1}^{\ast} ) ^{m}z_{1}^{n}\nonumber\\
\hphantom{K\big( \eta^{(1) }\big)}{} =\sum_{n,m=0}^{\infty}\alpha_{n} ( x(u_{1},0) )
^{\ast}H_{1}\rho \big( \vert z_{1} \vert ^{-2\kappa}, \vert z_{1} \vert ^{2\kappa}\big) \alpha_{m} ( x(u_{1},0) ) \overline{z_{1}}^{m}z_{1}^{n},\label{Keta1}
\end{gather}
because $H_{1}$ commutes with $\sigma$ and hence with $\rho ( z_{1}^{-\kappa},z_{1}^{\kappa}) $, and
\begin{gather}
K\big( \eta^{(3) }\big) =\sigma K\big( \eta^{(2) }\big) \sigma\nonumber\\
\hphantom{K\big( \eta^{(3) }\big)}{} =\sum_{n,m=0}^{\infty}(-1)
^{m+n}\alpha_{n}(x(u_{2},0)) ^{\ast}H_{1}\rho\big(\vert z_{2}\vert ^{-2\kappa},\vert z_{2}\vert ^{2\kappa}\big) \alpha_{m}(x( u_{2},0)) \overline{z_{2}}^{m}z_{2}^{n},\label{Keta3}
\end{gather}
because $\sigma\alpha_{n}(x(u_{2},0)) \sigma=(-1) ^{n}\alpha_{n}(x(u_{2},0)) $ for $n\geq0$. Now we use the expansion in powers of $(u-\widetilde{u}) ^{n}$ to evaluate $K\big( \eta^{(1)}\big) -K\big( \eta^{(3) }\big) $. From the inequality~(\ref{bndan2z})
\begin{gather}
\sum_{n=2}^{\infty} \Vert \alpha_{n}(x(u,0))
 \Vert \vert z\vert ^{n} \leq c^{\prime} \Vert
\alpha_{0}(x(u,0)) \Vert \left(
\frac{\vert z\vert }{\delta_{0}}\right) ^{2}\left( 1-\frac
{\vert z\vert }{\delta_{0}}\right) ^{-2\lambda-2}\nonumber\\
\hphantom{\sum_{n=2}^{\infty} \Vert \alpha_{n}(x(u,0)) \Vert \vert z\vert ^{n}}{} =c^{\prime}\delta \Vert \alpha_{0}(x(u,0))
 \Vert \big( 1-\delta^{1/2}\big) ^{-2\lambda-2}\label{bnd1}
\end{gather}
with $\delta_{0}=\min\limits_{1\leq j\leq N-2} \vert u-x_{j} \vert =\delta^{1/2}$ we can restrict the problem to $0\leq n,m\leq1$. The omitted terms in $K\big( \eta^{(1) }\big) -K\big( \eta^{(3) }\big) $ are bounded by $c^{\prime\prime}\delta^{1-2 \vert \kappa \vert } \Vert B_{1} \Vert \Vert \alpha_{0} (x( u_{2},x) ) \Vert ^{2}$, for some constant~$c^{\prime\prime}$. Then
\begin{gather*}
L_{1}\big(\eta^{(1)}\big) =\left( \prod \limits_{j=1}^{N}x_{j}^{(1) }\right) ^{-\gamma\kappa}\rho\big( z_{1}^{-\kappa},z_{1}^{\kappa}\big) \left\{
\begin{matrix}
\alpha_{00}(x(\widetilde{u},0)) +\alpha
_{0,1}(x(\widetilde{u},0)) ( u_{1}-\widetilde{u}) \\
+\alpha_{1,0}(x(\widetilde{u},0)) z_{1}+O(\delta)
\end{matrix}
\right\} ,\\
\sigma L_{1}\big(\eta^{(2)}\big) \sigma =\left(\prod\limits_{j=1}^{N}x_{j}^{(2) }\right) ^{-\gamma\kappa}\rho\big( z_{2}^{-\kappa},z_{2}^{\kappa}\big) \left\{
\begin{matrix}
\alpha_{00}(x(\widetilde{u},0)) +\alpha_{0,1}(x(\widetilde{u},0)) ( u_{2}-\widetilde{u}) \\
-\alpha_{1,0}(x(\widetilde{u},0)) z_{2}+O(\delta)
\end{matrix}
\right\} ,
\end{gather*}
because $\sigma\alpha_{1}(x(u,0)) \sigma= (-1) ^{n}\alpha_{1}(x(u,0)) $. The terms $O(\delta) $ correspond to the bound in~(\ref{bnd1}). Drop the
argument $x(\widetilde{u},0) $ for brevity. Combining these with~(\ref{Keta1}) and~(\ref{Keta3}) we obtain
\begin{gather*}
K\big(\eta^{(1)}\big) -K\big( \eta^{(3)}\big) = \{ \alpha_{0,1} ( u_{1}-\widetilde{u} )
+\alpha_{1,0}z_{1} \} ^{\ast}H_{1}\rho\big( \vert z_{1}\vert ^{-2\kappa},\vert z_{1}\vert ^{2\kappa}\big)\alpha_{00}\\
\hphantom{K\big(\eta^{(1)}\big) -K\big( \eta^{(3)}\big) =}{} +\alpha_{00}^{\ast}H_{1}\rho\big(\vert z_{1}\vert ^{-2\kappa
}, \vert z_{1} \vert ^{2\kappa}\big) \{ \alpha_{0,1} (u_{1}-\widetilde{u}) +\alpha_{1,0}z_{1}\} \\
\hphantom{K\big(\eta^{(1)}\big) -K\big( \eta^{(3)}\big) =}{}
+ \{ \alpha_{0,1} ( u_{1}-\widetilde{u} ) +\alpha_{1,0}z_{1} \} ^{\ast}H_{1}\rho\big(\vert z_{1}\vert ^{-2\kappa},\vert z_{1}\vert ^{2\kappa}\big)\\
\hphantom{K\big(\eta^{(1)}\big) -K\big( \eta^{(3)}\big) =}{}
\times \{ \alpha_{0,1}(u_{1}-\widetilde{u}) +\alpha_{1,0}z_{1}\} \\
\hphantom{K\big(\eta^{(1)}\big) -K\big( \eta^{(3)}\big) =}{}
- \{ \alpha_{0,1} ( u_{2}-\widetilde{u} ) -\alpha_{1,0}z_{2} \} ^{\ast}H_{1}\rho\big(\vert z_{2}\vert ^{-2\kappa},\vert z_{2}\vert ^{2\kappa}\big) \alpha_{00}\\
\hphantom{K\big(\eta^{(1)}\big) -K\big( \eta^{(3)}\big) =}{}
-\alpha_{00}^{\ast}H_{1}\rho\big( \vert z_{2}\vert ^{-2\kappa},\vert z_{2}\vert ^{2\kappa}\big) \{ \alpha_{0,1}(
u_{1}-\widetilde{u}) -\alpha_{1,0}z_{1}\} \\
\hphantom{K\big(\eta^{(1)}\big) -K\big( \eta^{(3)}\big) =}{}
- \{ \alpha_{0,1} ( u_{2}-\widetilde{u} ) +\alpha_{1,0}z_{2}\} ^{\ast}H_{1}\rho\big(\vert z_{2}\vert ^{-2\kappa
},\vert z_{2}\vert ^{2\kappa}\big)\\
\hphantom{K\big(\eta^{(1)}\big) -K\big( \eta^{(3)}\big) =}{}
\times \{ \alpha_{0,1}(u_{2}-\widetilde{u}) +\alpha_{1,0}z_{2}\} +O(\delta) .
\end{gather*}
The key fact is that the $\alpha_{00}^{\ast}H_{1}\rho\big( \vert z_{1} \vert ^{-2\kappa}, \vert z_{1} \vert ^{2\kappa}\big) \alpha_{00}$ terms cancel out ($ \vert z_{1} \vert = \vert z_{2} \vert $).

From $\Vert \alpha_{1,0}(x(\widetilde{u},0)) \Vert \leq\frac{(2\lambda) _{2}}{( 1-2\kappa_{0}) }\delta_{1}^{-1}\Vert \alpha_{0}( x(\widetilde{u},0)) \Vert $ and $\Vert \alpha_{0,1}(x(\widetilde{u},0)) \Vert\leq2\lambda\delta_{1}^{-1} \Vert \alpha_{0} ( x ( \widetilde {u},0))\Vert $ (from~(\ref{a0nbd})) $\delta_{1}=\delta_{0}-\vert u\vert =\delta^{1/2}-\delta=\delta^{1/2} (1-\delta^{1/2}) $. Thus the sum of the f\/irst order terms in $K\big(
\eta^{(1) }\big) -K\big( \eta^{(3)}\big)$ is bounded by $c^{\prime\prime\prime} \Vert \alpha_{0} ( x (\widetilde{u},0 ) ) \Vert ^{2}\delta^{1/2-2\vert
\kappa \vert }\big(1-\delta^{1/2}\big) ^{-1} \Vert H_{1}\Vert $, where the constant~$c^{\prime\prime\prime}$ is independent of $x(\widetilde{u},0) $ (but is dependent on $\kappa_{0}$ and~$N$). Note $ \vert u_{1}-\widetilde{u} \vert = \vert u_{2}-\widetilde{u}\vert =\vert z_{1}\vert =\vert z_{2}\vert =\delta$. The second last step is to relate $\Vert \alpha_{0}(x(\widetilde{u},0))\Vert $ to $\Vert L_{1}\big(\eta^{(1)}\big) \Vert $; indeed
\begin{gather*}
L_{1}\big(\eta^{(1)}\big) =\left( \prod\limits_{j=1}^{N}x_{j}^{(1) }\right) ^{-\gamma\kappa}\rho\big(z_{1}^{-\kappa},z_{1}^{\kappa}\big) \left\{ \alpha_{0}( x (\widetilde{u},0)) +\sum_{n=1}^{\infty}\alpha_{n}(x(\widetilde{u},0)) z_{1}^{n}\right\} .
\end{gather*}
Similarly to (\ref{bnd1})
\begin{gather*}
\sum_{n=1}^{\infty}\Vert \alpha_{n}( x( \widetilde{u},0) ) \Vert \vert z\vert ^{n} \leq
c^{\prime}\Vert \alpha_{0}( x(\widetilde{u},0) ) \Vert \left( \frac{\vert z\vert }{\delta_{0}}\right) \left( 1-\frac{\vert z\vert }{\delta_{0}}\right)
^{-2\lambda-1}\\
\hphantom{\sum_{n=1}^{\infty}\Vert \alpha_{n}( x( \widetilde{u},0) ) \Vert \vert z\vert ^{n}}{}
 =c^{\prime} \Vert \alpha_{0} ( x(\widetilde{u},0)) \Vert \delta^{1/2}\big(1-\delta^{1/2}\big) ^{-2\lambda-1}\leq c^{\prime\prime}\Vert \alpha_{0}( x( \widetilde{u},0) ) \Vert \delta^{1/2},
\end{gather*}
(if $\delta<\frac{1}{9}$ then $1-\delta^{1/2}>\frac{2}{3}$); thus
\begin{gather*}
\Vert \alpha_{0}(x(\widetilde{u},0)) \Vert \big( 1-c^{\prime\prime}\delta^{1/2}\big) \leq\delta^{-\vert \kappa\vert } \big\Vert L_{1}\big( \eta^{(1)}\big) \big\Vert .
\end{gather*}
By Theorem \ref{Lbnd}
\begin{gather*}
\big\Vert L\big(\eta^{(1)}\big) \big\Vert \leq
c\prod\limits_{1\leq i<j\leq N-1}\vert x_{i}-x_{j}\vert
^{-\vert \kappa\vert }\prod\limits_{j=1}^{N-2}\big\vert
e^{\mathrm{i}\theta_{j}}-e^{\mathrm{i} ( \theta_{N-1}+\delta^{\prime
} ) }\big\vert ^{-\vert \kappa\vert }\big\vert
e^{\mathrm{i}\theta_{N-1}}-e^{\mathrm{i} ( \theta_{N-1}+\delta^{\prime}) }\big\vert ^{-\vert \kappa\vert }\\
\hphantom{\big\Vert L\big(\eta^{(1)}\big) \big\Vert}{}
 \leq c\delta^{-\vert \kappa\vert \{ (N+1)(N-2) /2+1\} },
\end{gather*}
because the f\/irst two groups of terms satisfy the bound $\vert x_{i}-x_{j} \vert \geq\delta^{1/2}$. Combining everything we obtain the bound
\begin{gather*}
\big\Vert K\big(\eta^{(1)}\big) -K\big( \eta^{(3) }\big) \big\Vert \leq c^{\prime\prime\prime} \Vert
\alpha_{0}(x(\widetilde{u},0)) \Vert^{2}\delta^{1/2-2\vert \kappa\vert }\big(1-\delta^{1/2}\big)
^{-1} \Vert B_{1} \Vert \\
\hphantom{\big\Vert K\big(\eta^{(1)}\big) -K\big( \eta^{(3) }\big) \big\Vert }{}
\leq c^{\prime\prime} \Vert H_{1} \Vert \delta^{1/2-2 \vert \kappa \vert -\vert \kappa\vert \{ (N+1)(N-2) +2\} }.
\end{gather*}
The constant is independent of $\eta^{(1) }$ and the exponent on~$\delta$ is $\frac{1}{2}-\vert \kappa\vert \big( N^{2}-N+2\big)$. Thus the integral of part~(3) goes to zero as $\delta \rightarrow0$ if $\vert \kappa\vert <\big( 2\big(N^{2}-N+2\big) \big) ^{-1}$. This is a crude bound, considering that we
know everything works for $-1/h_{\tau}<\kappa<1/h_{\tau}$, but as we will see, an open interval of $\kappa$ values suf\/f\/ices.

\begin{Theorem}\label{suffctH}If there exists a Hermitian matrix $H$ such that
\begin{gather*} \upsilon H=H\upsilon \qquad \text{and} \qquad ( L_{1}(x_{0}) ^{\ast})^{-1}HL_{1}(x_{0}) ^{-1}
 \end{gather*}
 commutes with~$\sigma$, and $-\big(2\big( N^{2}-N+2\big) \big) ^{-1}<\kappa<\big( 2\big( N^{2}-N+2\big) \big) ^{-1}$ then
\begin{gather*}
\int_{\mathbb{T}^{N}}\{ ( x_{i}\mathcal{D}_{i}f(x)
) ^{\ast}L(x) ^{\ast}HL(x) g(x) -f(x) ^{\ast}L(x) ^{\ast}HL(x) x_{i}\mathcal{D}_{i}g(x) \} \mathrm{d}m(x) =0
\end{gather*}
for $f,g\in C^{(1) }\big( \mathbb{T}^{N};V_{\tau}\big)$ and $1\leq i\leq N$.
\end{Theorem}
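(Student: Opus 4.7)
The plan is to reduce to $i=N$ and then pass to a limit in a cutoff $\delta$ of the singular set, using Proposition~\ref{xdfKg-fKxdg} to trade one side of the identity for a derivative that is then eliminated by integration by parts in $\theta_N$, modulo controlled boundary contributions. First I would observe that the hypothesis $\upsilon H=H\upsilon$, combined with the change-of-basepoint construction producing $L_2$ and $H_2$ near $\{x_{N-2}=x_{N-1}\}$, yields on each face $\{x_{k}=x_{k+1}\}$ an analogous matrix $H_k$ commuting with $\tau((k,k+1))$; thus it suffices to treat $i=N$. With $K(x)=L(x)^{\ast}HL(x)$, the global bound from Theorem~\ref{Lbnd} together with the Selberg integrability $\int_{\mathbb{T}^{N}}\prod_{i<j}|x_i-x_j|^{-2|\kappa|}\mathrm{d}m<\infty$ for $|\kappa|<1/N$ will allow dominated convergence to identify the target integral as $\lim_{\delta\to 0^+}\int_{\Omega_{\delta}}\{-(x_N\mathcal{D}_N f)^\ast Kg+f^\ast K x_N\mathcal{D}_N g\}\mathrm{d}m$, and Proposition~\ref{xdfKg-fKxdg} then rewrites this as $\lim_{\delta\to 0^+}\int_{\Omega_{\delta}} x_N\partial_N(f^\ast Kg)\mathrm{d}m$.

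Second, I would perform the $\theta_N$-integration explicitly, using $x_N\partial_N=-\mathrm{i}\,\partial/\partial\theta_N$. For each fixed $(\theta_1,\dots,\theta_{N-1})$ the $\theta_N$-slice of $\Omega_{\delta}$ is a union of closed arcs whose endpoints lie at $\theta_N=\theta_j\pm\delta'$ with $\delta'=2\arcsin(\delta/2)$, so the inner integration produces an alternating sum of endpoint values of $f^\ast Kg$. These boundary contributions would then be controlled in two complementary regimes. In the coarse regime, where two of $\theta_1,\dots,\theta_{N-1}$ lie within $\sqrt\delta$ of each other, Theorem~\ref{Lbnd} together with the $(N-1)$-torus measure estimate $O(\sqrt\delta)$ of this set gives a net contribution $O(\delta^{1/2-N(N-1)|\kappa|})$, which vanishes for $|\kappa|<1/(2N(N-1))$. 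The well-spaced regime requires the local analysis of Section~\ref{locps}.

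In the well-spaced regime I would expand $K(\eta^{(1)})$ and $K(\eta^{(3)})=\sigma K(\eta^{(2)})\sigma$ around $\widetilde u=e^{\mathrm{i}\theta_{N-1}}$ using the double series from~(\ref{dblseries}) built from $L_1=\rho(z^{-\kappa},z^{\kappa})\sum_n\alpha_n(x(u,0))z^n$, carrying both a first-order expansion in $z\approx\pm\tfrac12\widetilde u(\zeta^{\pm1}-1)$ and in $u-\widetilde u$. The decisive step is that the zeroth-order contribution $\alpha_0^\ast H_1\rho(|z|^{-2\kappa},|z|^{2\kappa})\alpha_0$ is identical for $\eta^{(1)}$ and $\eta^{(3)}$, because the hypothesis that $H_1$ commutes with $\sigma$ implies $H_1$ commutes with $\rho(z^{-\kappa},z^{\kappa})$; those terms therefore cancel in the difference. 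The surviving first-order terms are then $O(\delta^{1-2|\kappa|}\|\alpha_0\|^2)$ using~(\ref{bndan}) and~(\ref{a0nbd}), and converting $\|\alpha_0(x(\widetilde u,0))\|$ back to the global bound of Theorem~\ref{Lbnd} yields a pointwise estimate of the form $C\|H_1\|\,\delta^{1/2-(N^2-N+2)|\kappa|}$. Integrating this over $R_\delta$ preserves the order, so the contribution vanishes for $|\kappa|<1/(2(N^2-N+2))$, which is exactly the interval in the statement.

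The main obstacle is this cancellation: a crude pointwise estimate of $\|K(\eta^{(1)})-K(\eta^{(3)})\|$ is only of order $\delta^{-2|\kappa|}$, which gives nothing when integrated against $R_\delta$. Only the \emph{exact} vanishing of the off-diagonal $\sigma$-blocks of $H_1$—equivalent to the hypothesis that $(L_1(x_0)^\ast)^{-1}HL_1(x_0)^{-1}$ commutes with $\sigma$—removes the zeroth-order term and provides the $\delta^{1/2}$ gain that survives integration. The explicit $\kappa$-interval in the conclusion is dictated by the trade-off between this gain and the accumulated number of singular factors $|x_i-x_j|^{-|\kappa|}$ in the global bound on $L$.
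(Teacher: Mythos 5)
Your proposal reproduces the paper's own argument in all essentials: the reduction via $H_2=\upsilon^{-1}H_1\upsilon$ to the face $\{x_{N-1}=x_N\}$, the three-regime splitting ($\min|x_i-x_j|<\delta$ handled by Selberg integrability and dominated convergence, the coarse $\sqrt{\delta}$-spacing regime giving $O\big(\delta^{1/2-N(N-1)|\kappa|}\big)$, and the well-spaced regime), Proposition~\ref{xdfKg-fKxdg} plus the explicit $\theta_N$-integration, and above all the cancellation of the zeroth-order blocks $\alpha_0^{\ast}H_1\rho\big(|z|^{-2\kappa},|z|^{2\kappa}\big)\alpha_0$ forced by $H_1$ commuting with $\sigma$, with the first-order terms converted back through Theorem~\ref{Lbnd} to yield the exponent $\tfrac12-|\kappa|\big(N^2-N+2\big)$. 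The only quibble is a bookkeeping slip in the intermediate estimate (with $\delta_1\approx\delta^{1/2}$ the surviving first-order terms are $O\big(\delta^{1/2-2|\kappa|}\|\alpha_0\|^2\big)$, not $O\big(\delta^{1-2|\kappa|}\|\alpha_0\|^2\big)$), which does not affect the final exponent or the stated $\kappa$-interval.
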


It is important that we can derive uniqueness of $H$ from the relation, because the conditions $\langle wf,wg\rangle = \langle f,g\rangle $, $\langle x_{i}f,x_{i}g\rangle =\langle f,g \rangle $, and $ \langle x_{i}\mathcal{D}_{i}f,g \rangle = \langle f,x_{i}\mathcal{D}_{i}g \rangle $ for $w\in\mathcal{S}_{N}$ and $1\leq i\leq N$ determine the Hermitian form uniquely up to multiplication by a constant. Thus the measure $K(x) \mathrm{d}m(x)$ is similarly determined, by the density of Laurent polynomials.

\section{The orthogonality measure on the torus}\label{orthmu}

At this point there are two logical threads in the development. On the one hand there is a suf\/f\/icient condition implying the desired orthogonality measure is of the form $L^{\ast}HL\mathrm{d}m$, specif\/ically if $H$ commutes with $\upsilon$, $( L_{1}(x_{0}) ^{\ast})^{-1}HL_{1}(x_{0}) ^{-1}$ commutes with $\sigma$, and $\vert \kappa\vert <( 2( N^{2}-N+2))^{-1}$. However we have not yet proven that $H$ exists. On the other hand in~\cite{Dunkl2016} we showed that there does exist an orthogonality measure of the form $\mathrm{d}\mu=\mathrm{d}\mu_{S}+L^{\ast}HL\mathrm{d}m$ where $\operatorname{spt}\mu_{S}\subset\mathbb{T}^{N}\backslash\mathbb{T}_{\rm reg}^{N}$, $H$~commutes with $\upsilon$, and $-1/h_{\tau}<\kappa<1/h_{\tau}$ (the support of a Baire measure $\nu$, denoted by $\operatorname{spt}\nu$, is the smallest compact set whose complement has $\nu$-measure zero). In the next sections we will show that $( L_{1}(x_{0}) ^{\ast}) ^{-1}HL_{1}(x_{0}) ^{-1}$ commutes with $\sigma$ and that $H$ is an analytic function of $\kappa$ in a complex neighborhood of this interval. Combined with the above suf\/f\/icient condition this is enough to show that there is no singular part, that is, $\mu_{S}=0$. The proof involves the formal dif\/ferential equation satisf\/ied by the Fourier--Stieltjes series of~$\mu$, which is used to show $\mu_{S}=0$ on $\big\{ x\in\mathbb{T}^{N}\colon \#\{x_{j}\} _{j=1}^{N}=N-1\big\} $ (that is, $x$ has at least $N-1$ distinct components). In turn this implies $( L_{1}^{\ast}(x_{0})) ^{-1}HL_{1}(x_{0}) ^{-1}$ commutes with~$\sigma$. The proofs unfortunately are not short. In the sequel $H$ refers to the Hermitian matrix in the formula for $\mathrm{d}\mu$ and $K$ denotes~$L^{\ast}HL$. Also $H$ is positive-def\/inite since the measure~$\mu$ is positive (else there exists a vector $v$ with $Hv=0$ and then the $C^{(1) }\big( \mathbb{T}_{\rm reg}^{N};V_{\tau}\big) $ function given by $f(x) :=L(x) ^{-1}vg(x) $ where~$g$ is a~smooth scalar nonnegative function with support in a~suf\/f\/iciently small neighborhood of $x_{0}$, has norm $ \langle f,f \rangle =0$, a~contradiction). Thus $H$ has a~positive-def\/inite square root $C$ which commutes with $\upsilon$. Now extend~$CL(x) $ from $\mathcal{C}_{0}$ to all of $\mathbb{T}_{\rm reg}^{N}$ by Def\/inition~\ref{DefL(x)T} and so $K(x) =L^{\ast}(x) C^{\ast}CL(x) $ for all $x\in\mathbb{T}_{\rm reg}^{N}$ (this follows from $K(xw) =\tau(w) ^{-1}K(x) \tau(w) $).

Furthermore $\int_{\mathbb{T}^{N}} \Vert K(x)\Vert \mathrm{d}m(x) <\infty$ because $K\mathrm{d}m$ is the absolutely continuous part of the f\/inite Baire measure~$\mu$.

We will show that $(L_{1}^{\ast}(x_{0})) ^{-1}C^{\ast}CL_{1}(x_{0}) ^{-1}$ commutes with $\sigma$. The proof begins by establishing a recurrence relation for the Fourier coef\/f\/icients of $K(x) $, which comes from equation~(\ref{Kdieq}). For~$F(x) $ integrable on~$\mathbb{T}^{N}$, possibly matrix-valued, and $\alpha\in\mathbb{Z}^{N}$ let $\widehat{F}_{\alpha}=\int_{\mathbb{T}^{N}}F(x) x^{-\alpha}\mathrm{d}m(x)$. Clearly $\int_{\mathbb{T}^{N}}x^{\beta}F (x) x^{-\alpha}\mathrm{d}m(x) =\widehat{F}_{\alpha-\beta}$; and if $\partial_{i}F(x) $ is also integrable then
(integration-by-parts)
\begin{gather}
\int_{\mathbb{T}^{N}}x_{i}\partial_{i}F(x) x^{-\alpha}\mathrm{d}m(x) =\alpha_{i}\int_{\mathbb{T}^{N}}F (x) x^{-\alpha}\mathrm{d}m(x) .\label{diffFC}
\end{gather}
For a subset $J\subset\{1,2,\ldots,N\} $ let $\varepsilon_{J}\in\mathbb{N}_{0}^{N}$ be def\/ined by $( \varepsilon_{J}) _{i}=1$ if $i\in J$ and $=0$ otherwise; also $\varepsilon_{i}:=\varepsilon_{\{i\} }$. For $1\leq i\leq N$ let
\begin{gather*}
E_{i} :=\{1,2,\ldots,N\} \backslash\{i\} ,\qquad E_{ij} :=E_{i}\backslash\{j\} ,\\
p_{i}(x) :=\prod\limits_{j\neq i}(x_{i}-x_{j}) =\sum\limits_{\ell=0}^{N-1}(-1) ^{\ell}x_{i}^{N-1-\ell}\sum\limits_{J\subset E_{i},\#J=\ell}x^{\varepsilon_{J}}.
\end{gather*}
Equation (\ref{Kdieq}) can be rewritten as
\begin{gather}
p_{i}(x) x_{i}\partial_{i}K(x) =\kappa\sum_{j\neq i}\prod\limits_{\ell\neq i,j}(x_{i}-x_{\ell}) \{ x_{j}\tau((i,j)) K(x) +K(x) \tau((i,j)) x_{i}\};\label{Kdiffeq1}
\end{gather}
this is a polynomial relation which shows that $p_{i}(x) x_{i}\partial_{i}K(x) $ is integrable and which has implications for the Fourier coef\/f\/icients of~$K$.

\begin{Proposition}
For $1\leq i\leq N$ and $\alpha\in\mathbb{Z}^{N}$ the Fourier coefficients $\widehat{K}$ satisfy
\begin{gather}
 \sum_{\ell=0}^{N-1}(-1) ^{\ell}(\alpha_{i}+\ell) \sum\limits_{J\subset E_{i},\, \#J=\ell}\widehat{K}_{\alpha+\ell\varepsilon_{i}-\varepsilon_{J}}\nonumber\\
\qquad{} =\kappa\sum_{j\neq i}\sum_{\ell=0}^{N-2}(-1) ^{\ell}\sum_{J\subset E_{ij},\, \#J=\ell}\big\{ \tau ( (i,j)) \widehat{K}_{\alpha+\ell\varepsilon_{i}-\varepsilon_{j}-\varepsilon_{J}}+\widehat{K}_{\alpha+(l+1) \varepsilon
_{i}-\varepsilon_{J}}\tau((i,j))\big\}.\label{FCrec}
\end{gather}
\end{Proposition}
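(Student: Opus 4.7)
The plan is to derive (\ref{FCrec}) by taking the $\alpha$-th Fourier coefficient of both sides of the polynomial identity (\ref{Kdiffeq1}). Since $K\in L^1(\mathbb{T}^N)$ (it is the density of the absolutely continuous part of the finite Baire measure $\mu$) and the right-hand side of (\ref{Kdiffeq1}) is a polynomial expression in $x$ times $K$, both sides are integrable on $\mathbb{T}^N$, so pairing with $x^{-\alpha}$ and integrating yields a finite identity.

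For the right-hand side I would substitute the expansion $\prod_{k\neq i,j}(x_i-x_k)=\sum_{\ell=0}^{N-2}(-1)^{\ell}x_i^{N-2-\ell}\sum_{J\subset E_{ij},\,\#J=\ell}x^{\varepsilon_J}$, multiply by $x_j$ or $x_i$ and by $x^{-\alpha}$, and apply $\int x^{\beta}K(x)x^{-\alpha}\,\mathrm{d}m=\widehat{K}_{\alpha-\beta}$ to each summand. This directly produces a sum of shifted Fourier coefficients of $K$ with $\tau((i,j))$ on the appropriate side, matching the right-hand side of (\ref{FCrec}) after the reindexing described below.

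For the left-hand side the key is to apply formula (\ref{diffFC}) to $F=p_iK$ rather than to $K$ itself, since $\partial_iK$ alone is not a priori integrable across the singular set. The derivative $\partial_i(p_iK)=(\partial_ip_i)K+p_i\partial_iK$ is integrable: the first summand because $\partial_ip_i$ is polynomial and $K\in L^1$, and the second because (\ref{Kdiffeq1}) exhibits $p_ix_i\partial_iK$ as a bounded polynomial combination of $K$. Formula (\ref{diffFC}) then yields $\int x_i\partial_i(p_iK)x^{-\alpha}\,\mathrm{d}m=\alpha_i\widehat{p_iK}_\alpha$, and using $x_i\partial_i(p_iK)=(x_i\partial_ip_i)K+p_ix_i\partial_iK$ gives
\[
\int p_ix_i\partial_iK\cdot x^{-\alpha}\,\mathrm{d}m=\alpha_i\widehat{p_iK}_\alpha-\int(x_i\partial_ip_i)K\cdot x^{-\alpha}\,\mathrm{d}m.
\]
Expanding $p_i$ and $x_i\partial_ip_i$ polynomially and reading off Fourier coefficients of $K$ converts this to $\sum_{\ell=0}^{N-1}(-1)^{\ell}(\alpha_i-(N-1-\ell))\sum_{J\subset E_i,\,\#J=\ell}\widehat{K}_{\alpha-(N-1-\ell)\varepsilon_i-\varepsilon_J}$.

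Finally, the substitution $\alpha\mapsto\alpha+(N-1)\varepsilon_i$ applied uniformly to the identity just derived converts each subscript $\alpha-(N-1-\ell)\varepsilon_i$ into $\alpha+\ell\varepsilon_i$ and each coefficient $\alpha_i-(N-1-\ell)$ into $\alpha_i+\ell$, reproducing the form of (\ref{FCrec}). The main conceptual point is the choice to integrate by parts against $p_iK$ rather than against $K$, so that (\ref{diffFC}) is legitimately applicable; once that is set up, the remainder is routine polynomial bookkeeping.
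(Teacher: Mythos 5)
Your approach is essentially the paper's own: both take the $\alpha$-th Fourier coefficient of (\ref{Kdiffeq1}) and apply (\ref{diffFC}) to $p_{i}K$ rather than to $K$ itself (the paper applies it to $p_{i}(x)K(x)x_{i}^{1-N}$, which simply builds your final substitution $\alpha\mapsto\alpha+(N-1)\varepsilon_{i}$ in from the start), and your integrability remark is the same one the paper makes immediately after (\ref{Kdiffeq1}). Your left-hand side, after the shift, agrees with the left side of (\ref{FCrec}).

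The one weak point is your closing assertion that the shifted identity ``reproduces the form of (\ref{FCrec})''; carried out carefully, it does not match the printed statement. In your expansion the monomial $x_{i}^{N-2-\ell}x_{j}x^{\varepsilon_{J}}$ contributes $\tau((i,j))\widehat{K}_{\alpha-(N-2-\ell)\varepsilon_{i}-\varepsilon_{j}-\varepsilon_{J}}$ and the monomial $x_{i}^{N-1-\ell}x^{\varepsilon_{J}}$ contributes $\widehat{K}_{\alpha-(N-1-\ell)\varepsilon_{i}-\varepsilon_{J}}\tau((i,j))$, so after $\alpha\mapsto\alpha+(N-1)\varepsilon_{i}$ the right side reads $\tau((i,j))\widehat{K}_{\alpha+(\ell+1)\varepsilon_{i}-\varepsilon_{j}-\varepsilon_{J}}+\widehat{K}_{\alpha+\ell\varepsilon_{i}-\varepsilon_{J}}\tau((i,j))$, i.e., the shifts $\ell$ and $\ell+1$ are interchanged relative to the displayed (\ref{FCrec}). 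This is not an error in your computation but an apparent misprint in the statement and in the paper's proof, where $x_{i}^{1-N}\prod_{\ell\neq i,j}(x_{i}-x_{\ell})$ is expanded as if the product had $N-1$ factors, dropping a factor $x_{i}^{-1}$. Indeed, since $K$ is homogeneous of degree zero, $\widehat{K}_{\beta}=0$ unless $\sum_{k}\beta_{k}=0$; with $\sum_{k}\alpha_{k}=0$ the subscripts on the printed right side have coordinate sums $\pm1$, so the printed recurrence would force the (generally nonzero) left side to vanish, whereas your version keeps every subscript at coordinate sum zero and is the faithful transcription of (\ref{Kdiffeq1}) (which is all the later Corollary comparing $\widehat{K}$ with $\widehat{\mu}$ actually uses). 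So what is missing from your write-up is precisely this final index check: you should record the recurrence you actually obtain, or note the discrepancy with the printed formula, rather than asserting agreement.
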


\begin{proof} Multiply both sides of (\ref{Kdiffeq1}) by $x_{i}^{1-N}$; this makes the terms homogeneous of degree zero. Suppose $j\neq i$ then
\begin{gather*}
x_{i}^{1-N}\prod\limits_{\ell\neq i,j}(x_{i}-x_{\ell}) =\prod\limits_{\ell\neq i,j}\left( 1-\frac{x_{\ell}}{x_{i}}\right)
=\sum_{\ell=0}^{N-2}(-1) ^{\ell}x_{i}^{-\ell}\sum_{J\subset E_{ij},\, \#J=\ell}x^{\varepsilon_{J}}.
\end{gather*}
Multiply the right side by $x^{-\alpha}\mathrm{d}m(x) $ and integrate over $\mathbb{T}^{N}$ to obtain%
\begin{gather*}
\kappa\sum_{j\neq i}\sum_{\ell=0}^{N-2}(-1) ^{\ell}\sum_{J\subset E_{ij},\, \#J=\ell}\big\{ \tau( (i,j)) \widehat{K}_{\alpha+\ell\varepsilon_{i}-\varepsilon_{j}%
-\varepsilon_{J}}+\widehat{K}_{\alpha+(l+1) \varepsilon_{i}-\varepsilon_{J}}\tau((i,j)) \big\} .
\end{gather*}
The sum is zero unless $\alpha\in\boldsymbol{Z}_{N}$ where $\boldsymbol{Z}_{N}:=\Big\{ \alpha\in\mathbb{Z}^{N}\colon \sum\limits_{j=1}^{N}\alpha_{j}=0\Big\} $, by the homogeneity. For the left side start with~(\ref{diffFC}) applied to $x_{i}^{1-N}p_{i}(x) x_{i}\partial_{i}K(x) $
\begin{gather*}
 (\alpha_{i}+N-1) \int_{\mathbb{T}^{N}}p_{i}(x)K(x) x_{i}^{1-N}x^{-\alpha}\mathrm{d}m(x) \\
\qquad{} =\int_{\mathbb{T}^{N}}\big\{ ( x_{i}\partial_{i}p_{i}(
x) ) K(x) +p_{i}(x) (x_{i}\partial_{i}K(x) ) \big\} x_{i}^{1-N}x^{-\alpha}\mathrm{d}m(x) ,\\
 \int_{\mathbb{T}^{N}}p_{i}(x)( x_{i}\partial _{i}K(x)) x_{i}^{1-N}x^{-\alpha}\mathrm{d}m (x) \\
\qquad{} =\int_{\mathbb{T}^{N}}( (\alpha_{i}+N-1) p_{i} ( x) -x_{i}\partial_{i}p_{i}(x) ) K(x)
x_{i}^{1-N}x^{-\alpha}\mathrm{d}m(x) \\
\qquad{} =\int_{\mathbb{T}^{N}}\sum_{\ell=0}^{N-1}(-1) ^{\ell} (\alpha_{i}+\ell ) x_{i}^{-\ell}\sum\limits_{J\subset E_{i},\#J=\ell
}x^{\varepsilon_{J}}K(x) x^{-\alpha}\mathrm{d}m(x)\\
\qquad{} =\sum_{\ell=0}^{N-1}(-1) ^{\ell} ( \alpha_{i}+\ell ) \sum\limits_{J\subset E_{i},\#J=\ell}\widehat{K}_{\alpha+\ell\varepsilon_{i}-\varepsilon_{J}}.
\end{gather*}

Combining the two sides f\/inishes the proof. If $\alpha\notin\boldsymbol{Z}_{N}$ then both sides are trivially zero.
\end{proof}

This system of recurrences has the easy (and quite undesirable) solution~$\widehat{K}_{\alpha}=I$ for all $\alpha\in\boldsymbol{Z}_{N}$ and~$0$ otherwise. The right side becomes $2\kappa\sum\limits_{j\neq i}\tau ((i,j)) \sum\limits_{\ell=0}^{N-2}(-1) ^{\ell}\binom {N-2}{\ell}=0$ (for $N\geq3$, an underlying assumption), and the left side is $\sum\limits_{\ell=0}^{N-1}(-1) ^{\ell}(\alpha_{i}+\ell) \binom{N-1}{\ell}I=0$. This~$\widehat{K}$ corresponds to the measure $\frac {1}{2\pi}\mathrm{d}\theta$ on the circle $\big\{ e^{\mathrm{i}\theta} (1,\ldots,1 ) \colon -\pi<\theta\leq\pi\big\}$. Next we show that $\widehat{\mu}_{\alpha}:=\int_{\mathbb{T}^{N}}x^{-\alpha}\mathrm{d}\mu (x ) $ satisf\/ies the same recurrences. Proposition~5.2 of~\cite{Dunkl2016} asserts that if $\alpha,\beta\in\mathbb{N}_{0}^{N}$ and $\sum\limits_{j=1}^{N}(\alpha_{j}-\beta_{j}) =0$ then
\begin{gather}
( \alpha_{i}-\beta_{i}) \widehat{\mu}_{\alpha-\beta}
=\kappa\sum_{\alpha_{j}>\alpha_{i}}\sum_{\ell=1}^{\alpha_{j}-\alpha_{i}}\tau((i,j)) \widehat{\mu}_{\alpha+\ell(
\varepsilon_{i}-\varepsilon_{j}) -\beta}\nonumber\\
\hphantom{( \alpha_{i}-\beta_{i}) \widehat{\mu}_{\alpha-\beta} }{}
 -\kappa\sum_{\alpha_{i}>\alpha_{j}}\sum_{\ell=0}^{\alpha_{i}-\alpha_{j}
-1}\tau((i,j)) \widehat{\mu}_{\alpha+\ell(
\varepsilon_{j}-\varepsilon_{i}) -\beta} -\kappa\sum_{\beta_{j}>\beta_{i}}\sum_{\ell=1}^{\beta_{j}-\beta_{i}}\widehat{\mu}_{\alpha-\ell( \varepsilon_{i}-\varepsilon_{j})
-\beta}\tau((i,j)) \nonumber\\
\hphantom{( \alpha_{i}-\beta_{i}) \widehat{\mu}_{\alpha-\beta} }{}
 +\kappa\sum_{\beta_{i}>\beta_{j}}\sum_{\ell=0}^{\beta_{i}-\beta_{j}-1}\widehat{\mu}_{\alpha-\ell(\varepsilon_{j}-\varepsilon_{i})
-\beta}\tau((i,j)) .\label{A(a-b)}
\end{gather}
The relation $\tau(w) ^{\ast}\widehat{\mu}_{w\alpha}\tau (w) =\widehat{\mu}_{\alpha}$ is shown in \cite[Theorem~4.4]{Dunkl2016}. Introduce Laurent series $\sum\limits_{\alpha\in\boldsymbol{Z}_{N}}B_{\alpha}^{(i,j) }x^{\alpha}$ ($i\neq j$) satisfying
\begin{gather*}
B_{\alpha}^{(i,j) }-B_{\alpha+\varepsilon_{i}-\varepsilon_{j}}^{(i,j) } =\widehat{\mu}_{\alpha},\qquad
B_{\alpha-\alpha_{j}(\varepsilon_{j}-\varepsilon_{i}) }^{(i,j) } =0,
\end{gather*}
note
\begin{gather*}
\alpha-\alpha_{j}(\varepsilon_{j}-\varepsilon_{i}) = \big(\ldots,\overset{i}{\alpha_{i}+\alpha_{j}},\ldots,\overset{j}{0},\ldots
,\overset{\ell}{\alpha_{\ell}},\ldots \big) , \qquad \ell\neq i,j.
\end{gather*}
The purpose of the def\/inition is to produce a formal Laurent series satisfying
\begin{gather*}
\left( 1-\frac{x_{j}}{x_{i}}\right) \sum_{\alpha}B_{\alpha}^{(i,j) }x^{\alpha}=\sum_{\alpha}\widehat{\mu}_{\alpha}x^{\alpha}.
\end{gather*} The ambiguity in the solution is removed by the second condition (note that $\sum_{\alpha}\big( B_{\alpha}^{(i,j) }-cI\big) x^{\alpha}$ also solves the f\/irst equation for any constant~$c$).

\begin{Proposition}\label{Bij-Bji}Suppose $i\neq j$ and $\alpha\in\boldsymbol{Z}_{N}$ then $B_{\alpha}^{(i,j)}\tau((i,j))=\tau((i,j))B_{(i,j)\alpha}^{(j,i)}$.
\end{Proposition}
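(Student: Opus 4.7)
The plan is to solve the recurrence defining $B^{(i,j)}_\alpha$ explicitly and then invoke the $\mathcal{S}_N$-equivariance of the Fourier--Stieltjes coefficients $\widehat{\mu}_\alpha$. First I would unroll the defining relation $B^{(i,j)}_\alpha - B^{(i,j)}_{\alpha+\varepsilon_i-\varepsilon_j} = \widehat{\mu}_\alpha$ as a telescoping sum, starting from the anchoring condition $B^{(i,j)}_{\alpha-\alpha_j(\varepsilon_j-\varepsilon_i)}=0$. This should yield closed forms
\begin{gather*}
B^{(i,j)}_\alpha = \sum_{\ell=0}^{\alpha_j-1}\widehat{\mu}_{\alpha+\ell(\varepsilon_i-\varepsilon_j)} \quad (\alpha_j\geq 0), \qquad
B^{(i,j)}_\alpha = -\sum_{\ell=1}^{-\alpha_j}\widehat{\mu}_{\alpha+\ell(\varepsilon_j-\varepsilon_i)} \quad (\alpha_j<0),
\end{gather*}
with the $\alpha_j=0$ case giving $0$ trivially.

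Second, I would apply the equivariance $\tau(w)^{\ast}\widehat{\mu}_{w\alpha}\tau(w)=\widehat{\mu}_\alpha$ from \cite[Theorem~4.4]{Dunkl2016}, specialized to $w=s:=(i,j)$; since $s$ is an involution and $\tau$ is an orthogonal representation, this reads $\widehat{\mu}_{s\alpha}=\tau(s)\widehat{\mu}_\alpha\tau(s)$. The key combinatorial observation is that $s$ interchanges $\varepsilon_i$ and $\varepsilon_j$, so $s\bigl(\alpha+\ell(\varepsilon_i-\varepsilon_j)\bigr)=s\alpha+\ell(\varepsilon_j-\varepsilon_i)$, i.e., the sums appearing in $B^{(j,i)}_{s\alpha}$ are exactly the $s$-translates of the sums appearing in $B^{(i,j)}_\alpha$. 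Since swapping $(i,j)$ to $(j,i)$ replaces $\alpha_j$ by $(s\alpha)_i=\alpha_j$, the sign regimes for the two formulas match up identically.

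Putting these together, each summand in $B^{(j,i)}_{s\alpha}$ equals $\tau(s)\widehat{\mu}_{\alpha+\ell(\varepsilon_i-\varepsilon_j)}\tau(s)$, so I can factor $\tau(s)$ on either side to obtain $B^{(j,i)}_{s\alpha}=\tau(s)B^{(i,j)}_\alpha\tau(s)$. Multiplying on the left by $\tau(s)$ and using $\tau(s)^2=I$ yields the claimed identity $B^{(i,j)}_\alpha\tau(s)=\tau(s)B^{(j,i)}_{s\alpha}$.

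The argument is essentially bookkeeping once the telescoping is set up, so the only real obstacle is checking the two sign cases ($\alpha_j>0$ and $\alpha_j<0$) carefully and keeping the shift directions straight; the $\alpha_j=0$ case is vacuous. No delicate analytic input is needed beyond the already-established transformation law for $\widehat{\mu}_\alpha$.
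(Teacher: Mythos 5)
Your proposal is correct and uses the same two ingredients as the paper's proof: the defining recurrence with its anchoring condition, and the equivariance $\widehat{\mu}_{\alpha}\tau((i,j))=\tau((i,j))\widehat{\mu}_{(i,j)\alpha}$. The only difference is organizational: you unroll the recurrence into an explicit telescoped sum (with the two sign cases in $\alpha_j$) and match summands termwise, whereas the paper avoids the case split by observing that $B_{\alpha}^{(i,j)}\tau((i,j))-\tau((i,j))B_{(i,j)\alpha}^{(j,i)}$ is invariant under $\alpha\mapsto\alpha+\varepsilon_{i}-\varepsilon_{j}$ and evaluating at the shift $s=\alpha_{j}$, where both terms vanish by the normalization.
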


\begin{proof} Start with $\widehat{\mu}_{\alpha}\tau((i,j)) =\tau((i,j)) \widehat{\mu}_{(i,j) \alpha}$ and the def\/ining relations
\begin{gather*}
B_{\alpha}^{(i,j) }\tau((i,j))-B_{\alpha+\varepsilon_{i}-\varepsilon_{j}}^{(i,j) }\tau((i,j)) =\widehat{\mu}_{\alpha}\tau((i,j)) ,\\
\tau((i,j)) B_{(i,j) \alpha}^{(j,i) }-\tau((i,j)) B_{(i,j) \alpha+\varepsilon_{j}-\varepsilon_{i}}^{(j,i)}=\tau((i,j)) \widehat{\mu}_{(i,j)\alpha};
\end{gather*}
subtract the second equation from the f\/irst:
\begin{gather*}
B_{\alpha}^{(i,j) }\tau((i,j))-\tau((i,j)) B_{(i,j) \alpha}^{(j,i) }=B_{\alpha+\varepsilon_{i}-\varepsilon_{j}}^{(i,j) }\tau((i,j)) -\tau((i,j)) B_{(i,j) \alpha+\varepsilon_{j}-\varepsilon_{i}}^{(j,i) }.
\end{gather*}
By two-sided induction
\begin{gather*}
B_{\alpha}^{(i,j) }\tau((i,j))-\tau((i,j)) B_{(i,j) \alpha}^{(j,i) }=B_{\alpha+s( \varepsilon_{i}-\varepsilon_{j}) }^{(i,j) }\tau((i,j))-\tau((i,j)) B_{(i,j)
\alpha+s(\varepsilon_{j}-\varepsilon_{i}) }^{(j,i)}
\end{gather*}
for all $s\in\mathbb{Z}$, in particular for $s=\alpha_{j}$ where the right hand side vanishes by def\/inition.
\end{proof}

\begin{Theorem}For $\gamma\in\boldsymbol{Z}_{N}$ and $1\leq i\leq N$
\begin{gather}
\gamma_{i}\widehat{\mu}_{\gamma}=\kappa\sum_{j\neq i}\big\{{-}\tau ((i,j)) B_{\gamma}^{(j,i) }+B_{\gamma}^{(i,j) }\tau((i,j)) \big\}.\label{Btomu}
\end{gather}
\end{Theorem}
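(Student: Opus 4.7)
My plan is to verify that~(\ref{Btomu}) is the Fourier-coefficient form of the ``formal differential equation'' obtained by applying~(\ref{Kdieq}) to the Laurent series $\mu(x):=\sum_{\alpha}\widehat{\mu}_{\alpha}x^{\alpha}$. Indeed, if $B^{(i,j)}(x):=\sum_{\alpha}B_{\alpha}^{(i,j)}x^{\alpha}$ denotes the formal Laurent series of the $B$-coefficients, then its defining recurrence reads $(1-x_{j}/x_{i})B^{(i,j)}(x)=\mu(x)$, equivalently $B^{(i,j)}(x)=\frac{x_{i}}{x_{i}-x_{j}}\mu(x)$ (and the analogous $B^{(j,i)}(x)=-\frac{x_{j}}{x_{i}-x_{j}}\mu(x)$). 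Thus~(\ref{Btomu}) is nothing but the formal identity $x_{i}\partial_{i}\mu(x)=\kappa\sum_{j\ne i}\{-\tau((i,j))B^{(j,i)}(x)+B^{(i,j)}(x)\tau((i,j))\}$ read coefficientwise, and the content of the theorem is that this formal identity does hold, which is the Fourier-space translation of the recurrences~(\ref{A(a-b)}) that $\widehat{\mu}$ satisfies.

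To make the argument rigorous I would pick $\alpha_{k}:=\max(\gamma_{k},0)$ and $\beta_{k}:=\max(-\gamma_{k},0)$ so that $\alpha,\beta\in\mathbb{N}_{0}^{N}$, $\alpha-\beta=\gamma$, and $\sum_{k}\alpha_{k}=\sum_{k}\beta_{k}$; then~(\ref{A(a-b)}) applies and expresses $\gamma_{i}\widehat{\mu}_{\gamma}$ as $\kappa$ times a sum over $j\ne i$ of four finite telescoping sums involving $\tau((i,j))\widehat{\mu}_{\bullet}$ and $\widehat{\mu}_{\bullet}\tau((i,j))$. Iterating the defining difference equation $B_{\gamma}^{(i,j)}-B_{\gamma+\varepsilon_{i}-\varepsilon_{j}}^{(i,j)}=\widehat{\mu}_{\gamma}$ from the vanishing initial condition $B_{\gamma+\gamma_{j}(\varepsilon_{i}-\varepsilon_{j})}^{(i,j)}=0$ gives an explicit closed form for $B_{\gamma}^{(i,j)}$ as a single telescoping sum of $\widehat{\mu}_{\bullet}$'s whose range and sign depend only on the sign of $\gamma_{j}$, and similarly for $B_{\gamma}^{(j,i)}$ depending on the sign of $\gamma_{i}$. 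Substituting these into the right-hand side of~(\ref{Btomu}) and comparing pair-by-pair in $j$ with the four sums from~(\ref{A(a-b)}) reduces the theorem to a case analysis by the signs of $\gamma_{i}$ and $\gamma_{j}$.

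The main obstacle is the bookkeeping in this case analysis: in the opposite-sign patterns $\gamma_{i}\ge 0\ge\gamma_{j}$ the summation ranges on the two sides agree termwise, but in the other patterns the ranges are controlled by different quantities ($\min(\alpha_{i},\alpha_{j})$ and $\min(\beta_{i},\beta_{j})$ on one side, $\gamma_{i}$ and $\gamma_{j}$ on the other), and one must apply the intertwining $\widehat{\mu}_{\alpha}\tau((i,j))=\tau((i,j))\widehat{\mu}_{(i,j)\alpha}$ from~\cite[Theorem~4.4]{Dunkl2016} together with the identity $(i,j)(\gamma+s(\varepsilon_{i}-\varepsilon_{j}))=\gamma+(\gamma_{j}-\gamma_{i}-s)(\varepsilon_{i}-\varepsilon_{j})$. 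The substitution $s\mapsto\gamma_{j}-\gamma_{i}-s$ that this identity suggests converts each $\widehat{\mu}_{\bullet}\tau((i,j))$ summand into a $\tau((i,j))\widehat{\mu}_{\bullet}$ summand with a reversed index range, and after this rewriting the telescoping sums from~(\ref{A(a-b)}) and from~(\ref{Btomu}) cancel down to the same expression, completing the verification.
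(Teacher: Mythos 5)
Your proposal is correct and follows essentially the same route as the paper: both start from~(\ref{A(a-b)}) with $\alpha_{k}=\max(\gamma_{k},0)$, $\beta_{k}=\max(-\gamma_{k},0)$, telescope the four sums, and split into cases according to the signs of $\gamma_{i}$ and $\gamma_{j}$. The only cosmetic difference is that the paper packages the conversion between $\widehat{\mu}_{\bullet}\tau((i,j))$ and $\tau((i,j))\widehat{\mu}_{\bullet}$ terms as Proposition~\ref{Bij-Bji}, namely $B_{\gamma}^{(i,j)}\tau((i,j))=\tau((i,j))B_{(i,j)\gamma}^{(j,i)}$, whereas you carry out the same conversion summand-by-summand via the intertwining relation and the substitution $s\mapsto\gamma_{j}-\gamma_{i}-s$.
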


\begin{proof}
The proof involves a number of cases (for each $(i,j) $ whether $\gamma_{i}\geq0$ or $\gamma_{i}<0$, $\gamma_{j}\geq0$ or $\gamma_{j}<0$). Consider equation~(\ref{A(a-b)}), in the terms on the f\/irst line (with $\tau((i,j))$ acting on the left) use the substitution $\widehat{\mu}_{\delta}=B_{\delta}^{(j,i)}-B_{\delta-\varepsilon_{i}+\varepsilon_{j}}^{(j,i) }$, and for the terms on the second line (with $\tau((i,j))$ acting on the right) use the substitution $\widehat{\mu}_{\delta}=B_{\delta }^{(i,j) }-B_{\delta+\varepsilon_{i}-\varepsilon_{j}}^{(i,j)}$. Set $\alpha_{\ell}=\max(\gamma_{\ell},0) $ and $\beta_{\ell}=\max ( 0,-\gamma_{\ell} ) $ for $1\leq\ell\leq N$, thus $\gamma=\alpha-\beta$. The left hand side is $( \alpha_{i}-\beta_{i}) \widehat{\mu}_{\alpha-\beta}=\gamma_{i}\widehat{\mu}_{\gamma}$. We consider two possibilities separately: (i) $\alpha_{i}\geq0$, $\beta_{i}=0$; (ii) $\alpha_{i}=0$, $\beta_{i}>0$; and describe the typical $\tau((i,j)) $ terms. The sums over~$\ell$ telescope. In the following any term of the form $\tau((i,j)) \widehat{\mu}_{\cdot}$ or $\widehat{\mu}_{\cdot}\tau((i,j)) $ not mentioned explicitly is zero. Proposition~\ref{Bij-Bji} is used in each case. For case (i) and $\alpha_{j}>\alpha_{i}$
\begin{gather*}
\tau((i,j)) \sum_{\ell=1}^{\alpha_{j}-\alpha_{i}}\widehat{\mu}_{\alpha+\ell ( \varepsilon_{i}-\varepsilon_{j} )-\beta}=\tau((i,j)) \sum_{\ell=1}^{\alpha
_{j}-\alpha_{i}}\big( B_{\gamma+\ell( \varepsilon_{i}-\varepsilon_{j}) }^{(j,i) }-B_{\gamma+\ell ( \varepsilon_{i}-\varepsilon_{j}) -\varepsilon_{i}+\varepsilon_{j}}^{(j,i) }\big) \\
\hphantom{\tau((i,j)) \sum_{\ell=1}^{\alpha_{j}-\alpha_{i}}\widehat{\mu}_{\alpha+\ell ( \varepsilon_{i}-\varepsilon_{j} )-\beta}}{}
=\tau((i,j)) \sum_{\ell=1}^{\alpha_{j}-\alpha
_{i}}\big( B_{\gamma+\ell ( \varepsilon_{i}-\varepsilon_{j} )
}^{(j,i) }-B_{\gamma+(\ell-1) (\varepsilon_{i}-\varepsilon_{j}) }^{(j,i) }\big)\\
\hphantom{\tau((i,j)) \sum_{\ell=1}^{\alpha_{j}-\alpha_{i}}\widehat{\mu}_{\alpha+\ell ( \varepsilon_{i}-\varepsilon_{j} )-\beta}}{}
=\tau((i,j)) \big( B_{\gamma+ ( \alpha_{j}-\alpha_{i})( \varepsilon_{i}-\varepsilon_{j})}^{(j,i) }-B_{\gamma}^{(j,i) }\big) \\
\hphantom{\tau((i,j)) \sum_{\ell=1}^{\alpha_{j}-\alpha_{i}}\widehat{\mu}_{\alpha+\ell ( \varepsilon_{i}-\varepsilon_{j} )-\beta}}{}
=\tau((i,j)) \big( B_{(i,j)\gamma}^{(j,i) }-B_{\gamma}^{(j,i) }\big)=-\tau((i,j)) B_{\gamma}^{(j,i)}+B_{\gamma}^{(i,j) }\tau((i,j))
\end{gather*}
For case (i) and $\alpha_{i}>\alpha_{j}\geq0=\beta_{j}$
\begin{gather*}
-\tau((i,j)) \sum_{\ell=0}^{\alpha_{i}-\alpha_{j}-1}\widehat{\mu}_{\alpha+\ell ( \varepsilon_{j}-\varepsilon_{i}) -\beta} =-\tau((i,j)) \sum
_{\ell=0}^{\alpha_{i}-\alpha_{j}-1}\big( B_{\gamma+\ell (\varepsilon_{j}-\varepsilon_{i}) }^{(j,i) }-B_{\gamma+( \ell+1)( \varepsilon_{i}-\varepsilon
_{j}) }^{(j,i) }\big) \\
\hphantom{-\tau((i,j)) \sum_{\ell=0}^{\alpha_{i}-\alpha_{j}-1}\widehat{\mu}_{\alpha+\ell ( \varepsilon_{j}-\varepsilon_{i}) -\beta}}{}
 =-\tau((i,j)) \big( B_{\gamma}^{(j,i) }-B_{(i,j) \gamma}^{(j,i) }\big)\\
\hphantom{-\tau((i,j)) \sum_{\ell=0}^{\alpha_{i}-\alpha_{j}-1}\widehat{\mu}_{\alpha+\ell ( \varepsilon_{j}-\varepsilon_{i}) -\beta}}{}
=-\tau((i,j)) B_{\gamma}^{(j,i)}+B_{\gamma}^{(i,j) }\tau((i,j)) ,
\end{gather*}
note $\gamma+(\alpha_{i}-\alpha_{j}) ( \varepsilon_{j}-\varepsilon_{i}) =(i,j) \gamma$. For case (i) and $\alpha_{i}>\alpha_{j}=0>-\beta_{j}$
\begin{gather*}
-\tau((i,j)) \sum_{\ell=0}^{\alpha_{i}-1}\widehat{\mu}_{\alpha+\ell(\varepsilon_{j}-\varepsilon_{i})-\beta} =-\tau((i,j)) \sum_{\ell=0}^{\alpha_{i}-1}\big( B_{\gamma+\ell ( \varepsilon_{j}-\varepsilon_{i}) }^{(j,i) }-B_{\gamma+(\ell+1) (\varepsilon_{j}-\varepsilon_{i}) }^{(j,i) }\big) \\
\hphantom{-\tau((i,j)) \sum_{\ell=0}^{\alpha_{i}-1}\widehat{\mu}_{\alpha+\ell(\varepsilon_{j}-\varepsilon_{i})-\beta}}{}
 =-\tau((i,j)) \big( B_{\gamma}^{(j,i)}-B_{\gamma+\gamma_{i}( \varepsilon_{j}-\varepsilon_{i}) }^{(j,i) }\big) ,
\\
-\sum_{\ell=1}^{\beta_{j}}\widehat{\mu}_{\alpha-\ell(\varepsilon_{i}-\varepsilon_{j}) -\beta}\tau((i,j))=
-\sum_{\ell=1}^{\beta_{j}}\big( B_{\gamma-\ell ( \varepsilon_{i}-\varepsilon_{j}) }^{(i,j) }-B_{\gamma-\ell(\varepsilon_{i}-\varepsilon_{j})+\varepsilon_{i}-\varepsilon_{j}}^{(i,j) }\big) \tau((i,j)) \\
\hphantom{-\sum_{\ell=1}^{\beta_{j}}\widehat{\mu}_{\alpha-\ell(\varepsilon_{i}-\varepsilon_{j}) -\beta}\tau((i,j))}{}
=-\sum_{\ell=1}^{\beta_{j}}\big( B_{\gamma-\ell ( \varepsilon_{i}-\varepsilon_{j}) }^{(i,j) }-B_{\gamma-(\ell-1) ( \varepsilon_{i}-\varepsilon_{j}) }^{(
i,j) }\big) \tau((i,j))\\
\hphantom{-\sum_{\ell=1}^{\beta_{j}}\widehat{\mu}_{\alpha-\ell(\varepsilon_{i}-\varepsilon_{j}) -\beta}\tau((i,j))}{}
 =\big(B_{\gamma}^{(i,j) }-B_{\gamma+\gamma_{j}( \varepsilon_{i}-\varepsilon_{j}) }^{(i,j) }\big) \tau((i,j))
\end{gather*}
let $\delta=\gamma+\gamma_{i}(\varepsilon_{j}-\varepsilon_{i}) $ then $\delta_{k}=\gamma_{k}$ for $k\neq i,j$, $\delta_{i}=0$, and $\delta_{j}=\gamma_{i}+\gamma_{j}$; also $(i,j) \delta=\gamma+\gamma_{j}( \varepsilon_{i}-\varepsilon_{j})$. Thus the sum of the terms for this case is
\begin{gather*}
-\tau((i,j)) B_{\gamma}^{(j,i)}+B_{\gamma}^{(i,j) }\tau((i,j)) +\tau((i,j)) B_{\delta}^{(j,i)}+B_{(i,j) \delta}^{(i,j) }\tau( (i,j))\\
\qquad{} =-\tau((i,j)) B_{\gamma}^{(j,i) }+B_{\gamma}^{(i,j) }\tau((i,j)) .
\end{gather*}
For case (ii) and $\beta_{j}=-\gamma_{j}>\beta_{i}=-\gamma_{i}>0$
\begin{gather*}
-\sum_{\ell=1}^{\beta_{j}-\beta_{i}}\widehat{\mu}_{\alpha-\ell (\varepsilon_{i}-\varepsilon_{j}) -\beta}\tau( (i,j)) =-\sum_{\ell=1}^{\beta_{j}-\beta_{i}}\big( B_{\gamma-\ell (\varepsilon_{i}-\varepsilon_{j}) }^{(i,j) }-B_{\gamma-(\ell-1) ( \varepsilon_{i}-\varepsilon_{j}) }^{(i,j) }\big) \tau((i,j)) \\
\hphantom{-\sum_{\ell=1}^{\beta_{j}-\beta_{i}}\widehat{\mu}_{\alpha-\ell (\varepsilon_{i}-\varepsilon_{j}) -\beta}\tau( (i,j))}{}
=\big( {-}B_{\gamma- ( \gamma_{i}-\gamma_{j} ) (\varepsilon_{i}-\varepsilon_{j}) }^{(i,j) }+B_{\gamma}^{(i,j) }\big) \tau((i,j))\\
\hphantom{-\sum_{\ell=1}^{\beta_{j}-\beta_{i}}\widehat{\mu}_{\alpha-\ell (\varepsilon_{i}-\varepsilon_{j}) -\beta}\tau( (i,j))}{}
=\big( {-}B_{(i,j) \gamma}^{(i,j) }+B_{\gamma}^{(i,j) }\big) \tau((i,j))\\
\hphantom{-\sum_{\ell=1}^{\beta_{j}-\beta_{i}}\widehat{\mu}_{\alpha-\ell (\varepsilon_{i}-\varepsilon_{j}) -\beta}\tau( (i,j))}{}
=-\tau((i,j)) B_{\gamma}^{(j,i)}+B_{\gamma}^{(i,j) }\tau((i,j)) .
\end{gather*}
For case (ii) and $\beta_{i}>\beta_{j}=-\gamma_{j}\geq0$ (and $\alpha_{j}=0$)
\begin{gather*}
\sum_{\ell=0}^{\beta_{i}-\beta_{j}-1}\widehat{\mu}_{\alpha-\ell (\varepsilon_{j}-\varepsilon_{i}) -\beta}\tau( (i,j)) =\sum_{\ell=0}^{\beta_{i}-\beta_{j}-1}\big( B_{\gamma-\ell(\varepsilon_{j}-\varepsilon_{i}) }^{(i,j) }-B_{\gamma-(\ell+1)( \varepsilon_{j}-\varepsilon
_{i}) }^{(i,j) }\big) \tau ( (i,j)) \\
\hphantom{\sum_{\ell=0}^{\beta_{i}-\beta_{j}-1}\widehat{\mu}_{\alpha-\ell (\varepsilon_{j}-\varepsilon_{i}) -\beta}\tau( (i,j))}{}
=\big( B_{\gamma}^{(i,j) }-B_{\gamma- ( \gamma_{j}-\gamma_{i}) (\varepsilon_{j}-\varepsilon_{i}) }^{(i,j) }\big) \tau((i,j))\\
\hphantom{\sum_{\ell=0}^{\beta_{i}-\beta_{j}-1}\widehat{\mu}_{\alpha-\ell (\varepsilon_{j}-\varepsilon_{i}) -\beta}\tau( (i,j))}{}
 =\big(B_{\gamma}^{(i,j) }-B_{(i,j) \gamma}^{(i,j) }\big) \tau((i,j)) \\
\hphantom{\sum_{\ell=0}^{\beta_{i}-\beta_{j}-1}\widehat{\mu}_{\alpha-\ell (\varepsilon_{j}-\varepsilon_{i}) -\beta}\tau( (i,j))}{}
=-\tau((i,j)) B_{\gamma}^{(j,i)}+B_{\gamma}^{(i,j) }\tau((i,j)) .
\end{gather*}
For case (ii) and $-\beta_{i}=\gamma_{i}<0<\gamma_{j}=\alpha_{j}$ (and $\beta_{j}=0$)
\begin{gather*}
 \tau((i,j)) \sum_{\ell=1}^{\alpha_{j}}\widehat{\mu}_{\alpha+\ell( \varepsilon_{i}-\varepsilon_{j})
-\beta}+\sum_{\ell=0}^{\beta_{i}-1}\widehat{\mu}_{\alpha-\ell(\varepsilon_{j}-\varepsilon_{i}) -\beta}\tau( (i,j)) \\
\qquad{} =\tau((i,j)) \sum_{\ell=1}^{\alpha_{j}}\big(B_{\gamma+\ell ( \varepsilon_{i}-\varepsilon_{j} ) }^{(j,i) }-B_{\gamma+(\ell-1) ( \varepsilon
_{i}-\varepsilon_{j}) }^{(j,i) }\big) \\
\qquad\quad{} +\sum_{\ell=0}^{\beta_{i}-1}\big( B_{\gamma-\ell ( \varepsilon_{j}-\varepsilon_{i}) }^{(i,j) }-B_{\gamma-(\ell+1) (\varepsilon_{j}-\varepsilon_{i}) }^{(i,j) }\big) \tau((i,j)) \\
\qquad{} =\tau((i,j)) \big( B_{\gamma+\gamma_{j}( \varepsilon_{i}-\varepsilon_{j}) }^{(j,i)}-B_{\gamma}^{(j,i) }\big) +\big( B_{\gamma}^{(i,j)}-B_{\gamma+\gamma_{i}( \varepsilon_{j}-\varepsilon_{i}) }^{(i,j) }\big) \tau ( (i,j)) \\
\qquad{} =-\tau((i,j)) B_{\gamma}^{(j,i) }+B_{\gamma}^{(i,j) }\tau((i,j)) ,
\end{gather*}
because $(i,j) ( \gamma+\gamma_{j}( \varepsilon_{i}-\varepsilon_{j}) ) =\gamma+\gamma_{i}( \varepsilon_{j}-\varepsilon_{i}) $. In the trivial case $\gamma_{i}=\gamma_{j}$ so that $(i,j) \gamma=\gamma$ where are no nonzero $\tau ((i,j)) $ terms the equation $-\tau( (i,j)) B_{\gamma}^{(j,i) }-B_{\gamma}^{(
i,j) }\tau((i,j)) =0$ applies. Thus in each case and for each $j\neq i$ the right hand side contains the expression $-\kappa\big( \tau((i,j)) B_{\gamma}^{(
j,i) }-B_{\gamma}^{(i,j) }\tau( (i,j)) \big)$.
\end{proof}

In the following there is no implied claim about convergence, because any term $x^{\alpha}$ appears only a f\/inite number of times in the equation.

\begin{Theorem}
For $1\leq i\leq N$ the formal Laurent series $F(x) :=\sum\limits_{\alpha\in\boldsymbol{Z}_{N}}\widehat{\mu}_{\alpha}x^{\alpha}$ satisfies the equation
\begin{gather}
p_{i}(x) x_{i}\partial_{i}F(x) =\kappa\sum_{j\neq i}\prod\limits_{\ell\neq i,j}(x_{i}-x_{\ell}) \{ x_{j}\tau((i,j)) F(x) +F(x) \tau((i,j)) x_{i}\}
.\label{diffKLs}
\end{gather}
\end{Theorem}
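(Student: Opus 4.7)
The plan is to introduce, for each ordered pair $i\neq j$, the formal Laurent series $G^{(i,j)}(x):=\sum_{\alpha\in\boldsymbol{Z}_{N}}B_{\alpha}^{(i,j)}x^{\alpha}$, treat both $F$ and $G^{(i,j)}$ as elements of the space of formal Laurent series in which each coefficient is eventually computed from a finite combination, and then deduce (\ref{diffKLs}) by combining two inputs: the pointwise recurrence (\ref{Btomu}) for $\widehat{\mu}_{\gamma}$, and the defining recurrence $B_{\alpha}^{(i,j)}-B_{\alpha+\varepsilon_{i}-\varepsilon_{j}}^{(i,j)}=\widehat{\mu}_{\alpha}$.

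First I would translate the defining recurrence of $B^{(i,j)}_\alpha$ into a formal identity for $G^{(i,j)}$. Multiplying by $x^{\alpha}$ and summing over $\alpha\in\boldsymbol{Z}_N$, the shift by $\varepsilon_{i}-\varepsilon_{j}$ on the index corresponds to multiplication by $x_{j}/x_{i}$ on the series, so one obtains
\begin{gather*}
\left(1-\tfrac{x_{j}}{x_{i}}\right)G^{(i,j)}(x)=F(x),\qquad\text{equivalently}\qquad (x_{i}-x_{j})G^{(i,j)}(x)=x_{i}F(x).
\end{gather*}
By symmetry $(x_{j}-x_{i})G^{(j,i)}(x)=x_{j}F(x)$, i.e.\ $(x_{i}-x_{j})G^{(j,i)}(x)=-x_{j}F(x)$. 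This pair of relations is the main bridge between $F$ and the auxiliary series.

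Next I would sum the scalar identity (\ref{Btomu}) against $x^{\gamma}$. Since $\gamma_{i}\widehat{\mu}_{\gamma}x^{\gamma}$ is exactly the $x^{\gamma}$-coefficient of $x_{i}\partial_{i}F(x)$ (the operator $x_{i}\partial_{i}$ acts diagonally on monomials with eigenvalue $\gamma_{i}$), summation yields the formal identity
\begin{gather*}
x_{i}\partial_{i}F(x)=\kappa\sum_{j\neq i}\bigl\{-\tau((i,j))G^{(j,i)}(x)+G^{(i,j)}(x)\tau((i,j))\bigr\}.
\end{gather*}
Multiplying both sides by $p_{i}(x)=\prod_{\ell\neq i}(x_{i}-x_{\ell})$ and factoring one copy of $(x_{i}-x_{j})$ into each summand on the right, I substitute the bridge relations to obtain
\begin{gather*}
(x_{i}-x_{j})\bigl\{-\tau((i,j))G^{(j,i)}+G^{(i,j)}\tau((i,j))\bigr\}=x_{j}\tau((i,j))F(x)+F(x)\tau((i,j))x_{i},
\end{gather*}
using that the scalars $x_{i},x_{j}$ commute with matrices. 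This reproduces (\ref{diffKLs}) exactly.

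The only delicate point, rather than an obstacle, is formal well-definedness: every coefficient of $x^{\alpha}$ in the final equation must reduce to a finite linear combination of $\widehat{\mu}_{\beta}$'s. This is clear for the left side (a polynomial operator applied to $F$), and on the right side the series $G^{(i,j)}$ only appears multiplied by the polynomial $(x_{i}-x_{j})\prod_{\ell\neq i,j}(x_{i}-x_{\ell})$, after which it collapses to $\pm x_{i}F$ or $\pm x_{j}F$ via the bridge relations, so no infinite summations survive. I would verify this coefficient-by-coefficient at the start to justify treating $F$ and $G^{(i,j)}$ as algebraic objects, and then the rest of the argument is a short algebraic manipulation.
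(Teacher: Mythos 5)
Your proposal is correct and follows essentially the same route as the paper: both arguments sum the recurrence (\ref{Btomu}) against $x^{\gamma}$ and then use the defining telescoping relation $B_{\alpha}^{(i,j)}-B_{\alpha+\varepsilon_{i}-\varepsilon_{j}}^{(i,j)}=\widehat{\mu}_{\alpha}$, in the form $\left(1-\frac{x_{j}}{x_{i}}\right)G^{(i,j)}=F$ and $\left(1-\frac{x_{j}}{x_{i}}\right)G^{(j,i)}=-\frac{x_{j}}{x_{i}}F$, to collapse the auxiliary series; whether one multiplies by $x_{i}^{1-N}p_{i}(x)$ before or after the summation is immaterial. Your closing remark on formal well-definedness matches the paper's observation that each monomial receives only finitely many contributions.
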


\begin{proof}
Start with multiplying equation (\ref{Btomu}) by $x_{i}^{1-N}p_{i}(x) x^{\gamma}$ and sum over $\gamma\in\boldsymbol{Z}_{N}$ to obtain
\begin{gather*}
\prod\limits_{j=1,\, j\neq i}^{N}\left( 1-\frac{x_{j}}{x_{i}}\right)\sum_{\gamma\in\boldsymbol{Z}_{N}}\gamma_{i}\widehat{\mu}_{\gamma}x^{\gamma}
 =\kappa\sum_{j\neq i}\prod\limits_{k\neq i,j}\left( 1-\frac{x_{k}}{x_{i}}\right) \left( 1-\frac{x_{j}}{x_{i}}\right) \\
 \hphantom{\prod\limits_{j=1,j\neq i}^{N}\left( 1-\frac{x_{j}}{x_{i}}\right)\sum_{\gamma\in\boldsymbol{Z}_{N}}\gamma_{i}\widehat{\mu}_{\gamma}x^{\gamma}}{}
 \times\left\{ -\tau((i,j)) \sum_{\gamma\in\boldsymbol{Z}_{N}}B_{\gamma}^{(j,i) }x^{\gamma}+\sum_{\gamma\in\boldsymbol{Z}_{N}}B_{\gamma}^{(i,j) }x^{\gamma}%
\tau((i,j)) \right\} .
\end{gather*}
By construction
\begin{gather*}
\left( 1-\frac{x_{j}}{x_{i}}\right) \sum_{\gamma\in\boldsymbol{Z}_{N}}B_{\gamma}^{(i,j) }x^{\gamma}=\sum_{\gamma\in\boldsymbol{Z}_{N}}\big( B_{\gamma}^{(i,j) }-B_{\gamma+\varepsilon_{i}-\varepsilon_{j}}^{(i,j) }\big) x^{\gamma}=\sum_{\gamma\in\boldsymbol{Z}_{N}}\widehat{\mu}_{\gamma}x^{\gamma}
\end{gather*}
and
\begin{gather*}
\left( 1-\frac{x_{j}}{x_{i}}\right) \sum_{\gamma\in\boldsymbol{Z}_{N}}B_{\gamma}^{(j,i) }x^{\gamma}=-\frac{x_{j}}{x_{i}}\left(1-\frac{x_{i}}{x_{j}}\right) \sum_{\gamma\in\boldsymbol{Z}_{N}}B_{\gamma}^{(j,i) }x^{\gamma}=-\frac{x_{j}}{x_{i}}\sum_{\gamma\in\boldsymbol{Z}_{N}}\widehat{\mu}_{\gamma}x^{\gamma}.
\end{gather*}
Thus the equation becomes
\begin{gather*}
 \prod\limits_{j=1,\, j\neq i}^{N}\left( 1-\frac{x_{j}}{x_{i}}\right)
\sum_{\gamma\in\boldsymbol{Z}_{N}}\gamma_{i}\widehat{\mu}_{\gamma}x^{\gamma}\\
 \qquad{} =\kappa\sum_{j\neq i}\prod\limits_{k\neq i,j}\left( 1-\frac{x_{k}}{x_{i}}\right) \left\{ \frac{x_{j}}{x_{i}}\tau((i,j))
\sum_{\gamma\in\boldsymbol{Z}_{N}}\widehat{\mu}_{\gamma}x^{\gamma}+\sum_{\gamma\in\boldsymbol{Z}_{N}}\widehat{\mu}_{\gamma}x^{\gamma}\tau((i,j))\right\} .
\end{gather*}
This completes the proof.
\end{proof}

\begin{Corollary}
The coefficients $\{ \widehat{\mu}_{\alpha}\} $ satisfy the same recurrences as $\big\{ \widehat{K}_{\alpha}\big\}$ in~\eqref{FCrec}.
\end{Corollary}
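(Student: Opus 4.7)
The plan is to extract the coefficient of $x^\alpha$ on each side of the formal Laurent series identity \eqref{diffKLs} and observe that the resulting relation coincides term-by-term with \eqref{FCrec} after replacing $\widehat{K}_{\bullet}$ by $\widehat{\mu}_{\bullet}$. Concretely, I would multiply \eqref{diffKLs} through by $x_i^{1-N}$ to put both sides on the same footing as in the derivation of \eqref{FCrec}, since the proof of that proposition hinged on the expansion
\begin{gather*}
x_i^{1-N} p_i(x) = \sum_{\ell=0}^{N-1}(-1)^\ell x_i^{-\ell}\sum_{J\subset E_i,\,\#J=\ell} x^{\varepsilon_J},
\end{gather*}
and its analog for $x_i^{1-N}\prod_{\ell\neq i,j}(x_i-x_\ell)$ on the right-hand side.

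The main point is that the formal derivative operation $x_i\partial_i$ on a Laurent series is the exact analog of the integration-by-parts identity \eqref{diffFC}: for any $\gamma\in\mathbb{Z}^N$, the coefficient of $x^\alpha$ in $x^\gamma x_i\partial_i F(x)$ equals $(\alpha_i-\gamma_i)\widehat{\mu}_{\alpha-\gamma}$, since $x_i\partial_i x^\beta=\beta_i x^\beta$. Taking $\gamma=-\ell\varepsilon_i+\varepsilon_J$ with $J\subset E_i$ (so $\gamma_i=-\ell$), the contribution to the coefficient of $x^\alpha$ from the $\ell$-th and $J$-th terms of the expansion above is $(\alpha_i+\ell)\widehat{\mu}_{\alpha+\ell\varepsilon_i-\varepsilon_J}$, which is exactly the summand on the left of \eqref{FCrec}.

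The right-hand side of \eqref{diffKLs} is purely multiplicative in $F$, with the two species of terms $x_j\tau((i,j))F(x)$ and $F(x)\tau((i,j))x_i$ multiplied by $x_i^{1-N}\prod_{\ell\neq i,j}(x_i-x_\ell)=\sum_{\ell=0}^{N-2}(-1)^\ell x_i^{-\ell}\sum_{J\subset E_{ij},\,\#J=\ell} x^{\varepsilon_J}$. Extracting the coefficient of $x^\alpha$ from each species produces exactly the two sums $\tau((i,j))\widehat{\mu}_{\alpha+\ell\varepsilon_i-\varepsilon_j-\varepsilon_J}$ and $\widehat{\mu}_{\alpha+(\ell+1)\varepsilon_i-\varepsilon_J}\tau((i,j))$ that appear on the right of \eqref{FCrec}, after the same index bookkeeping as in the proposition. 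Both $\widehat{\mu}$ and $\widehat{K}$ vanish outside $\boldsymbol{Z}_N$ (by the homogeneity reflected in the relations $\upsilon H=H\upsilon$ and the invariance $\tau(w)^*\widehat{\mu}_{w\alpha}\tau(w)=\widehat{\mu}_\alpha$), so the two recurrence systems are indexed by the same set and the identification is complete.

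There is no real obstacle here: the corollary is essentially a cosmetic reformulation of the preceding theorem at the coefficient level, with the only care needed being the correct use of the formal derivative identity as a surrogate for integration by parts and the observation that the finite-sum nature of the coefficient extraction (noted in the remark preceding the theorem) makes all manipulations unambiguous without convergence issues.
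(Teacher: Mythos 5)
Your proposal is correct and matches the paper's (implicit) argument: the corollary is an immediate consequence of the preceding theorem, since \eqref{diffKLs} has exactly the form of \eqref{Kdiffeq1} and the derivation of \eqref{FCrec} was nothing but coefficient extraction, with the formal identity $x_{i}\partial_{i}x^{\beta}=\beta_{i}x^{\beta}$ playing the role of \eqref{diffFC}, so it applies verbatim to the formal series $F(x)=\sum_{\alpha\in\boldsymbol{Z}_{N}}\widehat{\mu}_{\alpha}x^{\alpha}$. The paper offers no separate proof, and your coefficient-by-coefficient verification is essentially the intended one.
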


\subsection{Maximal singular support}

Above we showed that $\mu$ and $K$ satisfy the same Laurent series dif\/ferential systems~(\ref{Kdiffeq1}) and~(\ref{diffKLs}), thus the singular part $\mu_{S}$ also satisf\/ies this relation. The singular part $\mu_{S}$ is the restriction of $\mu$ to $\bigcup\limits_{i<j}\big\{ x\in\mathbb{T}^{N} \colon x_{i}=x_{j}\big\} $, a closed set. For each pair $ \{k,\ell\} $ let $E_{k\ell}=\big\{ x\in\mathbb{T}^{N}\colon x_{k}\neq x_{\ell}\big\} $, an open subset of $\mathbb{T}^{N}$. For $i\neq j$ let
\begin{gather*}
T_{i,j}=\big\{ x\in\mathbb{T}^{N}\colon x_{i}=x_{j}\big\} \cap\bigcap \limits_{\{ k,\ell \} \cap\{i,j\} =\varnothing} \{ E_{k\ell}\cap E_{ik}\cap E_{jk}\};
\end{gather*}
this is an intersection of a closed set and an open set, hence $T_{i,j}$ is a~Baire set and the restriction~$\mu_{i,j}$ of $\mu$ to $T_{i,j}$ is a Baire
measure. Informally $T_{i,j}=\big\{ x\in\mathbb{T}^{N}\colon x_{i}=x_{j},\# \{ x_{k} \} =N-1\big\}$. We will prove that $\mu_{i,j}=0$ for all $i\neq j$. That is, $\mu_{S}$ is supported by $\big\{ x\in \mathbb{T}^{N}\colon \# \{ x_{k} \} \leq N-2\big\} $ (the number of distinct coordinate values is $\leq N-2$). In \cite[Corollary~4.15]{Dunkl2016} there is an approximate identity
\begin{gather*}
\sigma_{n}^{N-1}(x) :=\sum_{k=0}^{n}\frac{(-n) _{k}}{( 1-n-N) _{k}}\sum_{\alpha\in\boldsymbol{Z}_{N},\, \vert \alpha \vert =2k}x^{\alpha},
\end{gather*}
which satisf\/ies $\sigma_{n}^{N-1}(x) \geq0$ and $\sigma _{n}^{N-1}\ast\nu\rightarrow\nu$ as $n\rightarrow\infty$, in the weak-$\ast$ sense for any f\/inite Baire measure $\nu$ on $\mathbb{T}^{N}/\mathbb{D}$ (referring to functions and measures on $\mathbb{T}^{N}$ homogeneous of degree zero as Laurent series). The set $T_{i,j}$ is pointwise invariant under $(i,j)$ thus $\mathrm{d}\mu_{i,j}(x)=\mathrm{d}\mu_{i,j}(x(i,j))=\tau ((i,j)) \mathrm{d}\mu_{i,j}(x)\tau((i,j))$.

\begin{Remark}
The density of Laurent polynomials in $C^{(1) }\big(\mathbb{T}^{N}\big) $ can be shown by using an approximate identity, for example: $u_{n}(x) =\Big\{ \frac{1}{n+1}\sum\limits_{j=-n}^{n}( n- \vert j \vert +1) x_{1}^{j}\Big\} \sigma _{n}^{N-1}(x) $; for any $\alpha\in\mathbb{Z}^{N}$ the coef\/f\/icient of $x^{\alpha}$ in $u_{n}(x) $ tends to $1$ as $n\rightarrow\infty$ (express $\alpha=(\alpha_{1}-m) \varepsilon_{1}+( -m,\alpha_{2},\ldots,\alpha_{N}) $ where
$m=\sum\limits_{j=2}^{N}\alpha_{j}$). Then $f\ast u_{n}\rightarrow f$ in the $C^{(1) }\big( \mathbb{T}^{N}\big) $ norm.
\end{Remark}

Let $K_{n}^{s}=\sigma_{n}^{N-1}\ast\mu_{S}$ (convolution), a Laurent polynomial, f\/ix $\ell$ in $1\leq\ell\leq N$, and consider the functionals $F_{\ell,n}$, $G_{\ell,n}$ on scalar functions $p\in C^{(1)}\big( \mathbb{T}^{N}\big) $
\begin{gather*}
F_{\ell,n}(p) :=\int_{\mathbb{T}^{N}}p(x)
\prod\limits_{j\neq\ell}\left( 1-\frac{x_{j}}{x_{\ell}}\right) x_{\ell}\partial_{\ell}K_{n}^{s}(x) \mathrm{d}m(x), \\
G_{\ell,n}(p) :=\kappa\sum_{i\neq\ell}\int_{\mathbb{T}^{N}}p(x) \prod\limits_{j\neq\ell,i}\left( 1-\frac{x_{j}}{x_{\ell}}\right) \left\{ \frac{x_{i}}{x_{\ell}}\tau\left( \ell,i\right) K_{n}^{s}(x) +K_{n}^{s}(x) \tau( \ell,i)\right\} \mathrm{d}m(x) .
\end{gather*}
By construction the functionals annihilate $x^{\alpha}$ for $\alpha\notin\mathbf{Z}_{N}$. For a f\/ixed $\alpha\in\mathbf{Z}_{N}$ the value $F_{\ell,n}( x^{-\alpha}) -G_{\ell,n}( x^{-\alpha})$ is
\begin{gather*}
 \alpha_{\ell}A_{\alpha}b_{n}(\alpha) +\sum_{i=1}^{N-1} (-1) ^{i}\sum_{J\subset E_{\ell},\, \#J=i}( \alpha_{\ell}+i)A_{\alpha+i\varepsilon_{\ell}-\varepsilon_{J}}b_{n}( \alpha+i\varepsilon_{\ell}-\varepsilon_{J}) \\
 -\kappa\sum_{j=1,\, j\neq\ell}^{N}\sum_{i=0}^{N-2}(-1) ^{\ell}\sum_{J\subset E_{\ell,j},\, \#J=i}\left\{
\begin{matrix}
\tau(\ell,j) A_{\alpha+(i+1) \varepsilon_{\ell
}-\varepsilon_{j}-\varepsilon_{J}}b_{n}( \alpha+(i+1)
\varepsilon_{\ell}-\varepsilon_{j}-\varepsilon_{J}) \\
{} +A_{\alpha+i\varepsilon_{\ell}-\varepsilon_{J}}\tau(\ell,j)
b_{n}( \alpha+i\varepsilon_{\ell}-\varepsilon_{J})
\end{matrix}
\right\},
\end{gather*}
where $b_{n}(\gamma) :=\frac{(-n) _{\vert \gamma\vert /2}}{(1-N-n) _{\vert \gamma\vert/2}}$ (from the Laurent series of $\sigma_{n}^{N-1})$, and $A_{\gamma}:=\int_{\mathbb{T}^{N}}x^{-\gamma}\mathrm{d}\mu_{S}$ . Thus for f\/ixed $\alpha$ the coef\/f\/icients $b_{n}(\cdot) \rightarrow1$ as $n\rightarrow
\infty$ and the expression tends to the dif\/ferential system~\ref{Kdiffeq1} and%
\begin{gather*}
\lim_{n\rightarrow\infty}\big( F_{\ell,n} ( x^{-\alpha} )-G_{\ell,n}( x^{-\alpha}) \big) =0.
\end{gather*}
This result extends to any Laurent polynomial by linearity. From the approximate identity property
\begin{gather*}
\lim_{n\rightarrow\infty}G_{\ell,n}(p) =\kappa\sum_{i\neq\ell
}\int_{\mathbb{T}^{N}}p(x) \prod\limits_{j\neq\ell,i}\left(1-\frac{x_{j}}{x_{\ell}}\right) \left\{ \frac{x_{i}}{x_{\ell}}\tau (\ell,i) \mathrm{d}\mu_{S}(x) +\mathrm{d}\mu_{S}(x) \tau( \ell,i) \right\} ,
\end{gather*}
and
\begin{gather*}
\Vert G_{\ell,n}(p)\Vert \leq M\sup_{x,\, i}\left\vert p(x) \prod\limits_{j\neq\ell,i}\left( 1-\frac{x_{j}}{x_{\ell}}\right) \right\vert ,
\end{gather*}
where $M$ depends on $\mu_{S}$. Also
\begin{gather*}
F_{\ell,n}(p) =-\int_{\mathbb{T}^{N}}x_{\ell}\partial_{\ell}\left\{ p(x) \prod\limits_{j\neq\ell}\left( 1-\frac{x_{j}}{x_{\ell}}\right) \right\} K_{n}(x) \mathrm{d}m(x) ,
\end{gather*}
and
\begin{gather*}
\lim_{n\rightarrow\infty}F_{\ell,n}(p) =-\int_{\mathbb{T}^{N}}x_{\ell}\partial_{\ell}\left\{ p(x) \prod\limits_{j\neq\ell
}\left( 1-\frac{x_{j}}{x_{\ell}}\right) \right\} \mathrm{d}\mu_{S}(x)
\end{gather*}
for Laurent polynomials $p$. By density of Laurent polynomials in $C^{(1) }\big( \mathbb{T}^{N}\!/\mathbb{D}\big) $ ($\mathbb{D}\! =\!\{( u,u,\ldots,u) \colon\!$ $\vert u\vert =1 \} $ thus functions homogeneous of degree zero on~$\mathbb{T}^{N}$ can be considered as functions on the quotient group $\mathbb{T}^{N}/\mathbb{D}$) we obtain
\begin{gather}
 -\int_{\mathbb{T}^{N}}x_{\ell}\partial_{\ell}\left\{ p(x)\prod\limits_{j\neq\ell}\left( 1-\frac{x_{j}}{x_{\ell}}\right) \right\}\mathrm{d}\mu_{S}(x)\nonumber\\
\qquad{} =\kappa\sum_{i\neq\ell}\int_{\mathbb{T}^{N}}p(x) \prod\limits_{j\neq\ell,i}\left( 1-\frac{x_{j}}{x_{\ell}}\right) \left\{
\frac{x_{i}}{x_{\ell}}\tau\left( \ell,i\right) \mathrm{d}\mu_{S}(x) +\mathrm{d}\mu_{S}(x) \tau( \ell,i)\right\} ,\label{Tijformula}
\end{gather}
for all $p\in C^{(1) }\big( \mathbb{T}^{N}/\mathbb{D}\big)$.

\begin{Theorem}
For $1\leq i<j\leq N$ the restriction $\mu_{S}|T_{i,j}=0$.
\end{Theorem}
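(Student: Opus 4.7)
The plan is to apply the formal Laurent-series identity (\ref{Tijformula}) with $\ell = i$ to scalar test functions localized near points of $T_{i,j}$, and to extract from it an algebraic constraint on $d\mu_{i,j}$ that forces it to vanish.

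Fix $x^{*} \in T_{i,j}$. Since $T_{i,j}$ is defined as the closed condition $x_i = x_j$ intersected with open conditions keeping every other pair of coordinates distinct, there is a $\mathbb{D}$-invariant open neighborhood $U$ of $x^{*}$ with $U \cap \operatorname{spt}\mu_S \subseteq T_{i,j}$; for any $p \in C^{(1)}(\mathbb{T}^N/\mathbb{D})$ supported in $U$, every integral in (\ref{Tijformula}) against $d\mu_S$ reduces to one against $d\mu_{i,j}$. Factor
\begin{gather*}
\prod_{m \neq i}(1 - x_m/x_i) = (1 - x_j/x_i)\,Q(x), \qquad Q(x) := \prod_{m \neq i,\, j}(1 - x_m/x_i),
\end{gather*}
so that $Q$ is continuous and nowhere zero on $T_{i,j}$. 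Using $x_i \partial_i(1 - x_j/x_i) = x_j/x_i$ and the product rule, the left side of (\ref{Tijformula}) becomes $-\int p\,Q\,d\mu_{i,j}$: the summand proportional to $x_i \partial_i(pQ)$ carries the vanishing factor $(1 - x_j/x_i)$, while the surviving term $(x_j/x_i)\,pQ$ equals $pQ$ on $T_{i,j}$. On the right side, every $r \neq i, j$ retains $(1 - x_j/x_i)$ inside its product and therefore annihilates, leaving only $r = j$; that summand contributes $\kappa \int p\,Q\,\{\tau((i,j))\,d\mu_{i,j} + d\mu_{i,j}\,\tau((i,j))\}$, collapsing to $2\kappa\,\tau((i,j)) \int p\,Q\,d\mu_{i,j}$ via the commutation $\tau((i,j))\,d\mu_{i,j} = d\mu_{i,j}\,\tau((i,j))$ noted just before the theorem.

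Equating sides yields $\bigl(I + 2\kappa\,\tau((i,j))\bigr) \int p\,Q\,d\mu_{i,j} = 0$ for every admissible $p$. Since $\tau((i,j))^2 = I$, the matrix $I + 2\kappa\,\tau((i,j))$ has eigenvalues $1 \pm 2\kappa$, both nonzero because $|\kappa| < 1/h_\tau \le 1/3$ (the partitions $(N)$ and $(1^N)$ being excluded forces $h_\tau \ge 3$). Invertibility gives $\int p\,Q\,d\mu_{i,j} = 0$ for all admissible $p$, and nonvanishing of $Q$ on $T_{i,j}$ then forces $d\mu_{i,j}|U = 0$; covering $T_{i,j}$ by such neighborhoods finishes the proof. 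The main obstacle I anticipate is the bookkeeping that guarantees, after localization, the single factor $(1 - x_j/x_i)$ kills every contribution except the $r = j$ term feeding back through $\tau((i,j))$; once that is verified, the conclusion is immediate from invertibility of a constant matrix.
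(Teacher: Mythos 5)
Your proposal is correct and follows essentially the same route as the paper: localize test functions near a point of $T_{i,j}$ so that only $\mu_{i,j}$ contributes in (\ref{Tijformula}), use the vanishing factor $\left(1-x_j/x_i\right)$ to kill all terms except the one producing $\bigl(I+2\kappa\,\tau((i,j))\bigr)\int pQ\,\mathrm{d}\mu_{i,j}=0$, and invoke invertibility (the paper just needs $\kappa\neq\pm\tfrac12$) plus nonvanishing of $Q$ on $T_{i,j}$ to conclude. The only cosmetic difference is that the paper divides the test function by $\prod_{m>2}(1-x_m/x_1)$ at the end rather than citing $|\kappa|<1/h_\tau\le 1/3$, but the substance is identical.
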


\begin{proof} It suf\/f\/ices to take $i=1,j=2$. Let $E$ be an open neighborhood of a point in $T_{1,2}$ such that if $x\in\overline{E}$ (the closure) and $x_{i}=x_{j}$ for some pair $i<j$ then $i=1$ and $j=2$. Let $f(x) \in C^{(1) }\big( \mathbb{T}^{N}/\mathbb{D}\big) $ have support $\subset E$. Thus $f(x) =0=\partial_{1}f(x) $ at each point~$x$ such that $x_{i}=x_{j}$ for some pair $\{i,j\} \neq \{1,2\} $ ($f=0$ on a neighborhood of $\bigcup\limits_{i<j}\{
x\colon x_{i}=x_{j} \} \backslash T_{1,2}$). Then in formula~(\ref{Tijformula}) (with $\ell=1$) applied to $f$ the measure~$\mu_{S}$ can be replaced with~$\mu_{1,2}$. Evaluate the derivative
\begin{gather*}
x_{1}\partial_{1}\left\{ f(x) \prod\limits_{j\neq1}\left(
1-\frac{x_{j}}{x_{1}}\right) \right\} =\left( 1-\frac{x_{2}}{x_{1}}\right) f(x) x_{1}\partial_{1}\left\{ \prod\limits_{j>2}
\left( 1-\frac{x_{j}}{x_{1}}\right) \right\} \\
\qquad{} +f(x) \frac{x_{2}}{x_{1}}\prod\limits_{j>2}\left( 1-\frac
{x_{j}}{x_{1}}\right) +\left( x_{1}\partial_{1}f(x) \right) \left( 1-\frac{x_{2}}{x_{1}}\right) \prod\limits_{j>2}\left(
1-\frac{x_{j}}{x_{1}}\right) .
\end{gather*}
Each term vanishes on $\bigcup\limits_{i<j}\{x\colon x_{i}=x_{j}\} \backslash T_{1,2}$, and restricted to $T_{1,2}$ the value is $f(x) \prod\limits_{j>2}\big( 1-\frac{x_{j}}{x_{1}}\big) $. Thus
\begin{gather*}
-\int_{\mathbb{T}^{N}}x_{1}\partial_{1}\left\{ f(x)
\prod\limits_{j\neq1}\left( 1-\frac{x_{j}}{x_{1}}\right) \right\} \mathrm{d}\mu_{S}(x) =-\int_{\mathbb{T}^{N}}x_{1}
\partial_{1}\left\{ f(x) \prod\limits_{j\neq1}\left(1-\frac{x_{j}}{x_{1}}\right) \right\} \mathrm{d}\mu_{1,2}(x)
\\
\hphantom{-\int_{\mathbb{T}^{N}}x_{1}\partial_{1}\left\{ f(x)
\prod\limits_{j\neq1}\left( 1-\frac{x_{j}}{x_{1}}\right) \right\} \mathrm{d}\mu_{S}(x)}{}
 =-\int_{\mathbb{T}^{N}}f(x) \prod\limits_{j>2}\left(1-\frac{x_{j}}{x_{1}}\right) \mathrm{d}\mu_{1,2}(x).
\end{gather*}
The right hand side of the formula reduces to
\begin{align*}
 \kappa\int_{\mathbb{T}^{N}}f(x) \prod\limits_{j>2}\left(1-\frac{x_{j}}{x_{1}}\right) \left\{ \frac{x_{2}}{x_{1}}\tau (1,2) \mathrm{d}\mu_{1,2}(x) +\mathrm{d}\mu_{1,2} (x) \tau(1,2) \right\} \\
\qquad {} =2\kappa\tau(1,2) \int_{\mathbb{T}^{N}}f(x) \prod\limits_{j>2}\left( 1-\frac{x_{j}}{x_{1}}\right) \mathrm{d}\mu_{1,2}(x) ,
\end{align*}
since $\mathrm{d}\mu_{1,2}(x) \tau(1,2) =\tau(1,2) \mathrm{d}\mu_{1,2}(x) $. Thus the integral is a matrix $F(f) $ such that
\begin{gather*}
( I+2\kappa \tau(1,2)) F(f) =0,
\end{gather*} which implies $F(f) =0$ provided $\kappa\neq\pm\frac{1}{2}$. Replacing $f(x) $ by $f(x) \prod\limits_{j>2}\big(1-\frac{x_{j}}{x_{1}}\big) ^{-1}$ shows that $\mu_{1,2}=0$, since $E$ was arbitrarily chosen.
\end{proof}

\subsection{Boundary values for the measure}

In this subsection we will show that $K$ satisf\/ies the weak continuity condition
\begin{gather*}
\lim\limits_{x_{N-1}-x_{N}\rightarrow0} ( K(x)-K(x(N-1,N))) =0
\end{gather*}
at the faces of~$\mathcal{C}_{0}$ and then deduce that $H_{1}$ commutes with~$\sigma$ (as described in Theorem~\ref{suffctH}). The idea is to use the inner product property of $\mu$ on functions supported in a small enough neighborhood of $x^{(0)}=\big(1,\omega,\ldots,\omega^{N-3},\omega^{-3/2},\omega^{-3/2}\big)$ where~$\mu_{S}$ vanishes, so that only~$K$ is involved, then argue that a~failure of the continuity condition leads to a contradiction.

Let $0<\delta\leq\frac{2\pi}{3N}$ and def\/ine the boxes
\begin{gather*}
\Omega_{\delta} =\big\{ x\in\mathbb{T}^{N}\colon \big\vert x_{j} -x_{j}^{(0) }\big\vert \leq2\sin\tfrac{\delta}{2},\, 1\leq j\leq
N\big\} ,\\
\Omega_{\delta}^{\prime} =\big\{ x\in\mathbb{T}^{N-1}\colon \big\vert x_{j}-x_{j}^{(0) }\big\vert \leq2\sin\tfrac{\delta}{2},\, 1\leq
j\leq N-1\big\}
\end{gather*}
(so if $x_{j}=e^{\mathrm{i\theta}_{j}}$ then $\big\vert \theta_{j}-\frac{2\pi(j-1) }{N}\big\vert \leq\delta$, for $1\leq j\leq N-2$ and $\big\vert \theta_{j}-\frac{( 2N-3) \pi}{2}\big\vert $ for $N-1\leq j\leq N$). Then $x\in\Omega_{\delta}$ implies $ \vert x_{i}-x_{j} \vert \geq2\sin\frac{\delta}{2}$ for $1\leq i<j\leq N$ except for $i=N-1$, $j=N$ (that is, $ \vert \theta_{i}-\theta_{j} \vert \geq\delta $). Further $\Omega_{\delta}$ is invariant under $(N-1,N)$, while $\Omega_{\delta}\cap\Omega_{\delta}(i,N) =\varnothing$ for $1\leq i\leq N-2$. For brevity set $\phi_{0}=\frac{(2N-3) \pi
}{2}$, $e^{\mathrm{i\phi}_{0}}=\omega^{-3/2}$. We consider the identity
\begin{gather*}
\int_{\mathbb{T}^{N}} ( x_{N}\mathcal{D}_{N}f(x)) ^{\ast}\mathrm{d}\mu(x) g(x) -\int_{\mathbb{T}^{N}}f(x) ^{\ast}\mathrm{d}\mu(x) x_{N}\mathcal{D}_{N}g(x) =0
\end{gather*}
for $f,g\in C^{(1) }\big( \mathbb{T}^{N};V_{\tau}\big) $ whose support is contained in $\Omega_{\delta}$. Then $\operatorname{spt}((x_{N}\mathcal{D}_{N}f(x) ) ^{\ast}g(x)) \subset\Omega_{\delta}$ and $\operatorname{spt}( f(x) ^{\ast}x_{N}\mathcal{D}_{N}g(x)) \subset \Omega_{\delta}$.

The support hypothesis and the construction of $\Omega_{\delta}$ imply that $\Omega_{\delta}\cap\big( \mathbb{T}^{N}\backslash\mathbb{T}_{\rm reg}^{N}\big) \subset T_{N-1,N}$ and thus $\mathrm{d}\mu$ can be replaced by $K(x) \mathrm{d}m(x) $ in the formula. Recall the general identity~(\ref{dfKg})
\begin{gather*}
-( x_{N}\mathcal{D}_{N}f(x)) ^{\ast}K(x) g(x) +f(x) ^{\ast}K(x)x_{N}\mathcal{D}_{N}g(x) \\
\qquad{} =x_{N}\partial_{N} \{ f(x) ^{\ast}K(x)g(x) \} -\kappa\sum_{1\leq j\leq N-1}\frac{1}{x_{N}-x_{j}}\big\{ x_{j}f(x(j,N)) ^{\ast}\tau((j,N))
K(x) g(x) \\
\qquad\quad{} +x_{N}f(x) ^{\ast}K(x) \tau((j,N)) g( x( j,N)) \big\} .
\end{gather*}

Specialize to $\operatorname{spt}(f) \subset\Omega_{\delta}$ and $\operatorname{spt}(g) \subset\Omega_{\delta}$ and $x\in\Omega_{\delta}$ then only the $j=N-1$ term in the sum remains, and this term changes sign under $x\mapsto x(N-1,N) $.

For $\varepsilon>0$ let $\Omega_{\delta,\varepsilon}=\big\{ x\in \Omega_{\delta}\colon \vert x_{N-1}-x_{N} \vert \geq2\sin\frac {\varepsilon}{2}\big\} $, then
\begin{gather*}
\int_{\Omega_{\delta,\varepsilon}}\big\{ x_{N}\partial_{N} ( f (x ) ^{\ast}K(x) g(x) ) \\
\qquad{} + (x_{N}\mathcal{D}_{N}f(x) ) ^{\ast}K(x)g(x) -f(x) ^{\ast}K(x) x_{N}\mathcal{D}_{N}g(x) \big\} \mathrm{d}m(x)=0,
\end{gather*}
because $\Omega_{\delta,\varepsilon}$ is $(N-1,N) $-invariant (similar argument to Proposition~\ref{xdfKg-fKxdg}). By integrability
\begin{gather*}
\lim_{\varepsilon\rightarrow0_{+}}\int_{\Omega_{\delta,\varepsilon}}\big\{ ( x_{N}\mathcal{D}_{N}f(x) ) ^{\ast}K(x) g(x) -f(x) ^{\ast}K(x)
x_{N}\mathcal{D}_{N}g(x) \mathrm{d}m(x) \big\} =0,
\end{gather*}
hence
\begin{gather*}
\lim_{\varepsilon\rightarrow0_{+}}\int_{\Omega_{\delta,\varepsilon}}x_{N}\partial_{N} ( f(x) ^{\ast}K(x) g (x )) \mathrm{d}m(x) =0.
\end{gather*}
Now we use iterated integration. For f\/ixed $\theta_{N-1}$ the ranges for $\theta_{N}$ are obtained by inserting suitable gaps into the interval
$ [ \phi_{0}-\delta,\phi_{0}+\delta ] $ (as usual, $x=\big(e^{\mathrm{i}\theta_{1}},\ldots,e^{\mathrm{i}\theta_{N}}\big) $):
\begin{enumerate}\itemsep=0pt
\item[1)] $\phi_{0}-\delta\leq\theta_{N-1}\leq\phi_{0}-\delta+\varepsilon \colon [\theta_{N-1}+\varepsilon,\phi_{0}+\delta] $,

\item[2)] $\phi_{0}-\delta+\varepsilon\leq\theta_{N-1}\leq\phi_{0}+\delta-\varepsilon\colon [ \phi_{0}-\delta,\theta_{N-1}-\varepsilon ]\cup [ \theta_{N-1}+\varepsilon,\phi_{0}+\delta ] $,

\item[3)] $\phi_{0}+\delta-\varepsilon\leq\theta_{N-1}\leq\phi_{0}+\delta\colon [\phi_{0}-\delta,\theta_{N-1}-\varepsilon]$.
\end{enumerate}

From $x_{N}\partial_{N}=-\mathrm{i}\frac{\partial}{\partial\theta_{N}}$ it follows that
\begin{gather*}
\frac{1}{2\pi}\int_{a}^{b}x_{N}\partial_{N}(f^{\ast}Kg)
\big( \big( e^{\mathrm{i}\theta_{1}},\ldots,e^{\mathrm{i}\theta_{N}}\big) \big) \mathrm{d}\theta_{N}\\
\qquad{} =\frac{1}{2\pi\mathrm{i}}\big\{ (f^{\ast}Kg) \big( \big( e^{\mathrm{i}\theta_{1}},\ldots,e^{\mathrm{i}\theta_{N-1}},e^{\mathrm{i}b}\big)\big) -(f^{\ast}Kg) \big( \big(e^{\mathrm{i}\theta_{1}},\ldots,e^{\mathrm{i}\theta_{N-1}},e^{\mathrm{i}a}\big) \big) \big\} .
\end{gather*}
Since $f$ and $g$ are at our disposal we can take their supports contained in $\Omega_{\delta/2}$ then for $0<\varepsilon\leq\frac{\delta}{4}$ the
$\mathrm{d}\theta_{N}$-integrals for~(1) and~(3) vanish and the integrals in~(2) have the value
\begin{gather*}
\frac{1}{2\pi\mathrm{i}}\big\{ (f^{\ast}Kg) \big( \big(e^{\mathrm{i}\theta_{1}},\ldots,e^{\mathrm{i}\theta_{N-1}},e^{\mathrm{i} ( \theta_{N-1}-\varepsilon) }\big) \big) - ( f^{\ast}Kg ) \big( \big( e^{\mathrm{i}\theta_{1}},\ldots,e^{\mathrm{i}\theta_{N-1}},e^{\mathrm{i}( \theta_{N-1}+\varepsilon ) }\big)
\big) \big\} . 
\end{gather*}

We use the power series (from (\ref{Lzseries}))
\begin{gather*}
L_{1}(x(u,z)) =\left( \big( u^{2}-z^{2}\big) \prod_{j=1}^{N-2}x_{j}\right) ^{-\gamma\kappa}\rho\big(z^{-\kappa},z^{\kappa}\big) \sum_{n=0}^{\infty}\alpha_{n} ( x (u,0 ) ) z^{n},
\end{gather*}
with the notation $x(u,z) = ( x_{1},\ldots,x_{N-2},u-z,u+z ) $ for $x\in\Omega_{\delta}$. Recall $\alpha_{n} (x(u,0)) $ is analytic for a~region including
$\Omega_{\delta}$ and $\sigma\alpha_{n}(x(u,0)) \sigma=(-1) ^{n}\alpha_{n}(x(u,0))$. Also $\alpha_{0}(x(u,0)) $ is invertible. As in Section~\ref{locps} def\/ine $C_{1}:=CL_{1}(x_{0}) ^{-1}$ so that $L_{1}(x) ^{\ast}C_{1}^{\ast}C_{1}L_{1}(x) =L(x) ^{\ast}HL(x) $ on their common domain, and set $H_{1}=C_{1}^{\ast}C_{1}$. It suf\/f\/ices to use the approximation $\sum\limits_{n=0}^{\infty}\alpha_{n}(x(u,0))z^{n}=\alpha_{0}(x(u,0)) +O (\vert z \vert)$, uniformly in $\Omega_{2\pi/3N}$.

Let
\begin{gather*}
\eta^{(1) }(x,\theta,\varepsilon) :=\big(x_{1},\ldots,x_{N-2},e^{\mathrm{i}\theta},e^{\mathrm{i}(\theta+\varepsilon)}\big),\\
\eta^{(2) }(x,\theta,\varepsilon) :=\big(x_{1},\ldots,x_{N-2},e^{\mathrm{i}(\theta-\varepsilon)},e^{\mathrm{i}\theta}\big),\\
\eta^{(3) }(x,\theta,\varepsilon) :=\big(x_{1},\ldots,x_{N-2},e^{\mathrm{i}\theta},e^{\mathrm{i}(\theta-\varepsilon)}\big)
\end{gather*}
with $\eta^{(1) },\eta^{(2) }\in\Omega_{\delta}\cap\mathcal{C}_{0}$ and $\eta^{(3) }=\eta^{(2)}(N-1,N) $. Set $\zeta=e^{\mathrm{i}\varepsilon}$. Then
\begin{gather*}
\eta^{(1) } =x( u_{1}-z_{1},u_{1}+z_{1}), \qquad u_{1}=\frac{1}{2}e^{\mathrm{i}\theta} ( 1+\zeta ) , \qquad z_{1}=\frac
{1}{2}e^{\mathrm{i}\theta}(\zeta-1) ,\\
\eta^{(2) } =x ( u_{2}-z_{2},u_{2}+z_{2} ), \qquad u_{2}=\frac{1}{2}e^{\mathrm{i}\theta}\big( 1+\zeta^{-1}\big), \qquad z_{2}=\frac{1}{2}e^{\mathrm{i}\theta}\big( 1-\zeta^{-1}\big) =\zeta^{-1}z_{1}.
\end{gather*}

The invariance properties of $K$ imply $K\big( \eta^{(3) }\big) =\sigma K\big(\eta^{(2)}\big) \sigma$. Then
\begin{gather*}
K\big(\eta^{(1)}\big) =\alpha_{0} ( x (u_{1},0 ) ) ^{\ast}\rho\big( z_{1}^{-\kappa},z_{1}^{\kappa }\big) ^{\ast}H_{1}\rho\big( z_{1}^{-\kappa},z_{1}^{\kappa}\big) \alpha_{0}( x(u_{1},0)) +O\big( \vert z_{1} \vert ^{1-2\vert \kappa\vert }\big) ,\\
\sigma K\big(\eta^{(2)}\big) \sigma =\alpha_{0} (x(u_{2},0) ) ^{\ast}\rho\big( z_{2}^{-\kappa},z_{2}^{\kappa}\big) ^{\ast}\sigma H_{1}\sigma\rho\big( z_{2}^{-\kappa},z_{2}^{\kappa}\big) \alpha_{0}(x(u_{2},0)) +O\big( \vert z_{2} \vert ^{1-2\vert \kappa\vert}\big),
\end{gather*}
because $\sigma\alpha_{0}(x(u,0)) \sigma =\alpha_{0}(x(u,0)) $ and $\sigma=\rho(-1,1) $ commutes with $\rho( z_{2}^{-\kappa},z_{2}^{\kappa}) $. To express $K\big(\eta^{(1)}\big) -\sigma K\big(\eta^{(2)}\big) \sigma$ let
\begin{gather*}
A_{1}=\rho\big( z_{1}^{-\kappa},z_{1}^{\kappa}\big)^{\ast}H_{1}\rho\big(z_{1}^{-\kappa},z_{1}^{\kappa}\big)=O\big(\vert z_{1}\vert^{-2\vert \kappa\vert}\big),\\
A_{2} =\rho\big( z_{2}^{-\kappa},z_{2}^{\kappa}\big) ^{\ast}\sigma H_{1}\sigma\rho\big( z_{2}^{-\kappa},z_{2}^{\kappa}\big) =O\big(\vert z_{2}\vert^{-2\vert \kappa\vert }\big),
\end{gather*}
then
\begin{gather*}
K\big(\eta^{(1)}\big) -\sigma K\big( \eta^{(2) }\big) \sigma\\
\qquad{} =\alpha_{0} ( x(u_{1},0) ) ^{\ast}A_{1}\alpha_{0}( x(u_{1},0)) -\alpha_{0}(x(u_{2},0)) ^{\ast}A_{2}\alpha_{0}(x(u_{2},0)) +O\big(\vert z_{1}\vert ^{1-2\vert \kappa\vert}\big) .
\end{gather*}
Also $u_{2}-u_{1}=\frac{1}{2}x_{N-1}\xi_{1}\big( \zeta^{-1}-\zeta\big) =O(\vert z_{1}\vert) $ (since $\vert z_{1} \vert = \vert 1-\zeta \vert $) thus $\alpha_{0} ( x(u_{1},0) ) -\alpha_{0}( x(u_{2},0)) =O( \vert z_{1}\vert) $ and
\begin{gather*}
K\big(\eta^{(1)}\big) -\sigma K\big( \eta^{(2)}\big) \sigma=\alpha_{0} ( x(u_{1},0) ) ^{\ast}( A_{1}-A_{2}) \alpha_{0}( x(u_{1},0)) +O\big(\vert z_{1}\vert ^{1-2\vert \kappa \vert }\big) .
\end{gather*}
Using the $\sigma$-decomposition write
\begin{gather*}
H_{1}:=%
\begin{bmatrix}
H_{11} & H_{12}\\
H_{12}{}^{\ast} & H_{11}%
\end{bmatrix}
,
\end{gather*}
then
\begin{gather*}
A_{1}-A_{2} =
\begin{bmatrix}
H_{11}\vert z_{1}\vert ^{-2\kappa} & H_{12}\left( \dfrac{z_{1}%
}{\overline{z_{1}}}\right) ^{\kappa}\\
H_{12}{}^{\ast}\left( \dfrac{z_{1}}{\overline{z_{1}}}\right) ^{-\kappa} &
H_{11}\vert z_{1}\vert ^{2\kappa}%
\end{bmatrix}
-
\begin{bmatrix}
H_{11}\vert z_{2}\vert ^{-2\kappa} & -H_{12}\left( \dfrac{z_{2}%
}{\overline{z_{2}}}\right) ^{\kappa}\\
-H_{12}{}^{\ast}\left( \dfrac{z_{2}}{\overline{z_{2}}}\right) ^{-\kappa} &
H_{11}\vert z_{2}\vert ^{2\kappa}%
\end{bmatrix}
\\
\hphantom{A_{1}-A_{2}}{} =
\begin{bmatrix}
O & H_{12}\left\{ \left( \dfrac{z_{1}}{\overline{z_{1}}}\right) ^{\kappa
}+\left( \dfrac{z_{2}}{\overline{z_{2}}}\right) ^{\kappa}\right\} \\
H_{12}{}^{\ast}\left\{ \left( \dfrac{z_{1}}{\overline{z_{1}}}\right)
^{-\kappa}+\left( \dfrac{z_{2}}{\overline{z_{2}}}\right) ^{-\kappa}\right\}
& O
\end{bmatrix},
\end{gather*}
because $ \vert z_{2} \vert = \vert z_{1} \vert $. Next
\begin{gather*}
\frac{z_{1}}{\overline{z_{1}}} =e^{2\mathrm{i}\theta}\frac{e^{\mathrm{i}\varepsilon}-1}{e^{-\mathrm{i}\varepsilon}-1}=-e^{\mathrm{i} (
2\theta+\varepsilon ) },\qquad \frac{z_{2}}{\overline{z_{2}}}=e^{2\mathrm{i}\theta}\frac{1-e^{-\mathrm{i}\varepsilon}}{1-e^{\mathrm{i}\varepsilon}
}=-e^{\mathrm{i} ( 2\theta-\varepsilon ) },\\
\left( \frac{z_{1}}{\overline{z_{1}}}\right) ^{\kappa}+\left( \frac{z_{2}}{\overline{z_{2}}}\right) ^{\kappa} =\big( {-}e^{2\mathrm{i}\theta
}\big) ^{\kappa}\big\{ e^{\mathrm{i}\varepsilon\kappa}+e^{-\mathrm{i}\varepsilon\kappa}\big\} =2\big({-}e^{2\mathrm{i}\theta}\big) ^{\kappa
}\cos\varepsilon\kappa,
\end{gather*}
where some branch of the power function is used (the interval where it is applied is a small arc of the unit circle), and $\phi_{0}-\delta_{1}<\theta<\phi_{0}-\delta_{1}$.

We show that $H_{12}=O$, equivalently $H_{1}$ commutes with $\sigma$. By way of contradiction suppose some entry $h_{ij}\neq0$ ($1\leq i\leq m_{\tau}<j\leq
n_{\tau}$). There exist $r>0$, $\delta_{1}\geq\delta_{2}>0$ and $c\in \mathbb{C}$ with $\vert c\vert =1$ such that
\begin{gather*}
\operatorname{Re}\big( 2c\big( {-}e^{2\mathrm{i}\theta}\big) ^{\kappa}h_{ij}\big) >r
\end{gather*}
for $\phi_{0}-\delta_{2}\leq\theta\leq\phi_{0}+\delta_{2}$. Let $p (x) \in C^{(1) }\big( \mathbb{T}^{N}\big) $ such that $\operatorname{spt}(p) \subset\Omega_{\delta_{2}/2}$, $0\leq p (x) \leq1$ and $p(x) =1$ for $x\in\Omega_{\delta_{2}/4}$. Let $f(x) =p(x) \alpha_{0}( x(u,0))^{-1}\varepsilon_{i}$ and $g(x) =cp ( x) \alpha_{0}(x(u,0)) ^{-1}\varepsilon _{j}$ (for $x\in\Omega_{\delta_{2}/2}$). Also impose the bound $0<\varepsilon <\frac{\delta_{2}}{4}$. Then
\begin{gather*}
f\big(\eta^{(1)}\big) ^{\ast}K\big( \eta^{(1) }\big) g\big(\eta^{(1)}\big) =p\big(\eta^{(1) }\big) ^{2}c\left( \frac{z_{1}}{\overline{z_{1}}
}\right) ^{\kappa}h_{ij}+O\big( \vert z_{1}\vert ^{1-2 \vert \kappa \vert }\big), \\
f\big( \eta^{(3) }\big) ^{\ast}\sigma K\big(\eta^{(2) }\big) \sigma g\big( \eta^{(3)}\big) =-p\big( \eta^{(3) }\big) ^{2}c\left(
\frac{z_{2}}{\overline{z_{2}}}\right) ^{\kappa}h_{ij}+O\big( \vert z_{2} \vert ^{1-2\vert \kappa\vert }\big) ,
\end{gather*}

Suppose $x=\big( e^{\mathrm{i}\theta_{1}},\ldots,e^{\mathrm{i}\theta_{N}}\big) \in\Omega_{\delta_{2}/2}$ then $p(x) =1$ for
$\phi_{N-1}-\frac{\delta_{2}}{4}\leq\theta_{N-1},\theta_{N}\leq\phi _{N-1}-\frac{\delta_{2}}{4}$, thus $p\big( \eta^{(1) } (x,\theta,\varepsilon) \big) =1$ for $\phi_{0}-\frac{\delta_{2}}{4}\leq\theta\leq\phi_{0}-\frac{\delta_{2}}{4}-\varepsilon$ and $p\big( \eta^{(3) }(x,\theta,\varepsilon) \big) =1$ for
$\phi_{0}-\frac{\delta_{2}}{4}+\varepsilon\leq\theta\leq\phi_{0}-\frac {\delta_{2}}{4}$. By the continuous dif\/ferentiability it follows that for
$\phi_{0}-\frac{\delta_{2}}{4}\leq\theta\leq\phi_{0}-\frac{\delta_{2}}{4}$ both $p\big(\eta^{(1)}\big) =1+O( \varepsilon) $ and $p\big( \eta^{(3) }\big) =1+O(\varepsilon)$. Thus
\begin{gather*}
 p\big(\eta^{(1)}\big) f\big( \eta^{(1)}\big) ^{\ast}K\big(\eta^{(1)}\big) g\big(\eta^{(1)}\big) p\big(\eta^{(1)}\big)
-p\big( \eta^{(3) }\big) f\big(\eta^{(3)}\big) ^{\ast}\sigma K\big(\eta^{(2)}\big) \sigma g\big( \eta^{(3) }\big) p\big(\eta^{(3)}\big) \\
 \qquad {} =p\big(\eta^{(1)}\big) ^{2}c\left\{ \left(
\frac{z_{1}}{\overline{z_{1}}}\right) ^{\kappa}+\left( \frac{z_{2}}{\overline{z_{2}}}\right) ^{\kappa}\right\} h_{ij}+O\big(\vert z_{1}\vert ^{1-2\vert \kappa\vert }\big) +O(\varepsilon) .
\end{gather*}
By construction
\begin{gather*}
\operatorname{Re}\left( c\left\{ \left( \frac{z_{1}}{\overline{z_{1}}}\right) ^{\kappa}+\left( \frac{z_{2}}{\overline{z_{2}}}\right) ^{\kappa
}\right\} h_{ij}\right) >r\cos\varepsilon\kappa,
\end{gather*}
multiply the inequality by $p\big( \big( e^{\mathrm{i}\theta_{1}},\ldots,e^{\mathrm{i}\theta_{N-1}},e^{\mathrm{i}\theta_{N-1}}\big) \big)^{2}$ and integrate over the $(N-1) $-box $\Omega_{\delta_{2}}^{\prime}$ with respect to $\mathrm{d}m_{N-1}=\big( \frac{1}{2\pi}\big) ^{N-1}\mathrm{d}\theta_{1}\cdots\mathrm{d}\theta_{N-1}$. This integral dominates the integral over~$\Omega_{\delta_{2}/4}^{\prime}$, thus
\begin{gather*}
\operatorname{Re}\int_{\Omega_{\delta_{2}}^{\prime}}p\big( \eta^{(1) }\big) ^{2}c\left\{ \left( \frac{z_{1}}{\overline{z_{1}}%
}\right) ^{\kappa}+\left( \frac{z_{2}}{\overline{z_{2}}}\right) ^{\kappa}\right\} h_{ij}\mathrm{d}m_{N-1}\geq r\cos\varepsilon\kappa\left(\frac{\delta_{2}}{2\pi}\right) ^{N-1}.
\end{gather*}
This contradicts the limit of the integral being zero as $\varepsilon\rightarrow0$. The ignored parts of the integral are $O\big( \varepsilon
^{1-2\vert \kappa\vert }\big) $ and $ \vert \kappa \vert <\frac{1}{2}$. We have proven the following:

\begin{Theorem}\label{H1comm}For $-1/h_{\tau}<\kappa<1/h_{\tau}$ the matrix $H_{1}= (L_{1}^{\ast}(x_{0})) ^{-1}HL_{1}(x_{0})^{-1}$ commutes with~$\sigma$.
\end{Theorem}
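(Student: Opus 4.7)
The strategy is to exploit the vanishing of the singular part $\mu_S$ on $T_{N-1,N}$, established in the previous subsection, to reduce the inner-product relation $\int (x_N\mathcal{D}_N f)^* d\mu\,g = \int f^* d\mu\, x_N\mathcal{D}_N g$ to an identity involving only the absolutely continuous density $K\,dm$, when $f,g$ are supported in a small box $\Omega_\delta$ around $x^{(0)}=(1,\omega,\ldots,\omega^{N-3},\omega^{-3/2},\omega^{-3/2})$. By construction $\Omega_\delta$ meets the singular set only inside $T_{N-1,N}$, so after this reduction the problem is purely local and analytic.

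First I would apply the integration-by-parts identity from Proposition \ref{xdfKg-fKxdg} on the truncated set $\Omega_{\delta,\varepsilon}=\{x\in\Omega_\delta:|x_{N-1}-x_N|\ge 2\sin(\varepsilon/2)\}$, which is $(N-1,N)$-invariant so that all transposition cross-terms cancel. Letting $\varepsilon\to 0_+$, dominated convergence (via the $\prod|x_i-x_j|^{-2|\kappa|}$ bound of Theorem \ref{Lbnd}, integrable near $T_{N-1,N}$ for $|\kappa|<1/h_\tau$) forces $\int_{\Omega_{\delta,\varepsilon}} x_N\partial_N(f^*Kg)\,dm\to 0$. Carrying out the inner $\theta_N$-integration on intervals of the form $[\phi_0-\delta,\theta_{N-1}-\varepsilon]\cup[\theta_{N-1}+\varepsilon,\phi_0+\delta]$ collapses the boundary contributions to a single $(N-1)$-dimensional integral of $(f^*Kg)(\eta^{(1)})-(f^*Kg)(\eta^{(3)})$, where $\eta^{(1)}$ and $\eta^{(3)}$ are the points $(x_1,\ldots,x_{N-2},e^{i\theta},e^{i(\theta\pm\varepsilon)})$ sitting on opposite sides of the mirror.

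Next I would insert the local series $L_1(x(u,z)) = (\prod x_j)^{-\gamma\kappa}\rho(z^{-\kappa},z^\kappa)\sum_n \alpha_n(x(u,0))z^n$ from equation \eqref{Lzseries} to analyse $K(\eta^{(1)})-\sigma K(\eta^{(2)})\sigma$ (using $K(\eta^{(3)})=\sigma K(\eta^{(2)})\sigma$). Because $\sigma\alpha_0\sigma=\alpha_0$ and $\alpha_0$ is invertible, the leading order term reduces, in the $\sigma$-block decomposition of $H_1=\begin{bmatrix}H_{11}&H_{12}\\H_{12}^*&H_{22}\end{bmatrix}$, to $\alpha_0^*[A_1-A_2]\alpha_0$ plus an $O(|z_1|^{1-2|\kappa|})$ error, where the diagonal blocks of $A_1-A_2$ cancel (since $|z_1|=|z_2|$) and the off-diagonal blocks equal $2H_{12}(-e^{2i\theta})^\kappa\cos(\varepsilon\kappa)$ up to conjugation.

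Finally I would argue by contradiction: if some entry $h_{ij}\neq 0$ with $1\le i\le m_\tau<j\le n_\tau$, choose $c\in\mathbb{C}$ with $|c|=1$ and a small sub-arc on which $\mathrm{Re}(2c(-e^{2i\theta})^\kappa h_{ij})>r>0$, then take $f(x)=p(x)\alpha_0(x(u,0))^{-1}\varepsilon_i$ and $g(x)=c\,p(x)\alpha_0(x(u,0))^{-1}\varepsilon_j$ for a smooth cut-off $p$ supported in $\Omega_{\delta_2/2}$ with $p\equiv 1$ on $\Omega_{\delta_2/4}$. The resulting integrand has a uniformly positive real part on a set of positive measure, producing a lower bound of order $r\cos(\varepsilon\kappa)(\delta_2/2\pi)^{N-1}$, contradicting the vanishing of the limit. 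The technical obstacle, and the reason for the lengthy setup in this section, is the careful bookkeeping of the $O(|z|^{1-2|\kappa|})$ remainders and of the $O(\varepsilon)$ errors coming from $p(\eta^{(1)})-p(\eta^{(3)})$: these must remain subdominant to the $H_{12}$ term, which is why the restriction $|\kappa|<1/2$ (hence $|\kappa|<1/h_\tau$) is used to make $1-2|\kappa|>0$.
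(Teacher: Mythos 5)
Your proposal is correct and follows essentially the same route as the paper's own proof: reduction to $K\,dm$ via the vanishing of $\mu_S$ on $T_{N-1,N}$ for test functions supported near $x^{(0)}$, integration by parts on the $(N-1,N)$-invariant truncated box, extraction of the boundary difference $K(\eta^{(1)})-\sigma K(\eta^{(2)})\sigma$ via the local series with leading term $\alpha_0^*(A_1-A_2)\alpha_0$, and the contradiction argument with $f=p\,\alpha_0^{-1}\varepsilon_i$, $g=c\,p\,\alpha_0^{-1}\varepsilon_j$ forcing $H_{12}=O$. No substantive differences from the paper's argument.
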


\section{Analytic matrix arguments}\label{anlcmat}

In this section we set up some tools from linear algebra dealing with matrices whose entries are analytic functions of one variable. The aim is to establish the existence of an analytic solution for the matrices described in Theorem~\ref{H1comm}. The key fact is that the solution $L_{1}(x;\kappa)$ of~(\ref{Lsys}) is analytic in~$\kappa$ for $\vert \kappa\vert <\frac{1}{2}$, in fact for $\kappa\in\mathbb{C}\backslash ( \mathbb{Z}+\frac{1}{2})$; the series expansion in~(\ref{Lzseries}) does not apply to $\kappa\in\mathbb{Z}+\frac{1}{2}$ and a logarithmic term has to be included for this case. Set $b_{N}:= ( 2 ( N^{2}-N+2 )) ^{-1}$, the bound from Section~\ref{suffco}. One would like use analytic continuation to extend the inner product property of $L^{\ast}HL$ from the interval $-b_{N}<\kappa<b_{N}$ to $-1/h_{\tau}<\kappa<1/h_{\tau}$ but the Bochner theorem argument for the existence of $\mu$ does not allow~$\kappa$ to be a complex variable. The following arguments work around this obstacle.

\begin{Theorem}\label{Matrixeq}Suppose $M(\kappa) $ is an $m\times n$ complex matrix with $m\geq n-1$ such that the entries are analytic in $\kappa\in
D_{r}:= \{ \kappa\in\mathbb{C}\colon \vert \kappa\vert <r \}$, some $r>0$ and $\operatorname{rank}(M(\kappa)) =n-1$ for a~real interval $-r_{1}<\kappa<r_{1}$ then $\operatorname{rank}( M (\kappa)) =n-1$ for all $\kappa\in D_{r}$ except possibly at isolated points~$\lambda$ where $\operatorname{rank} ( M(\lambda)) <n-1$, and there is a nonzero vector function $v (\kappa) $, analytic on $D_{r}$ such that $M(\kappa)v(\kappa) =0$ and $v(\kappa) $ is unique up to
multiplication by a scalar function.
\end{Theorem}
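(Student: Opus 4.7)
The plan is to pass from the rank hypothesis to information about the minors of $M(\kappa)$, then build the kernel vector by a Cramer-type formula and clean up its zeros using analytic factorization on the disk $D_{r}$.

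First I would observe that the rank hypothesis on the interval $(-r_{1},r_{1})$ forces some $(n-1)\times(n-1)$ minor $p(\kappa)$ of $M(\kappa)$ to be nonzero on a subinterval. Since $p$ is analytic on $D_{r}$, the identity theorem says $p$ vanishes only on a discrete subset $S\subset D_{r}$; on $D_{r}\setminus S$ the rank of $M(\kappa)$ is at least $n-1$. In the case $m\geq n$, every $n\times n$ minor of $M(\kappa)$ is analytic on $D_{r}$ and vanishes identically on $(-r_{1},r_{1})$, so vanishes on all of $D_{r}$; hence $\operatorname{rank}M(\kappa)\leq n-1$ globally. Thus $\operatorname{rank}M(\kappa)=n-1$ on $D_{r}\setminus S$, with strict drop only on the isolated set $S$.

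Next I would construct $v(\kappa)$. Fix a base point $\kappa_{0}\in(-r_{1},r_{1})$ where $\operatorname{rank}M(\kappa_{0})=n-1$, and choose $n-1$ rows $i_{1},\ldots,i_{n-1}$ of $M(\kappa_{0})$ that are linearly independent; let $N(\kappa)$ denote the $(n-1)\times n$ submatrix formed by these rows. Define
\begin{gather*}
v_{j}(\kappa):=(-1)^{j+1}\det N_{\widehat{j}}(\kappa),\qquad 1\leq j\leq n,
\end{gather*}
where $N_{\widehat{j}}$ is $N$ with column $j$ removed. Each $v_{j}$ is a polynomial in the entries of $N(\kappa)$, hence analytic on $D_{r}$. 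Laplace expansion of the $n\times n$ determinant obtained by repeating any row of $N(\kappa)$ yields $N(\kappa)v(\kappa)=0$. At $\kappa_{0}$ at least one $v_{j}(\kappa_{0})\neq 0$ (the minor of an invertible $(n-1)\times(n-1)$ block), so $v\not\equiv 0$. For any other row of $M(\kappa)$, the fact that all $n\times n$ minors of $M(\kappa)$ vanish on $D_{r}$ (and trivially when $m=n-1$) gives $M(\kappa)v(\kappa)=0$ either by direct minor-expansion or by continuity from $D_{r}\setminus S$, where the kernel is automatically $1$-dimensional.

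The remaining issue is that $v(\kappa)$ may vanish simultaneously in all components at isolated points of $D_{r}$, so it need not be a witness of nonzero kernel vectors everywhere. To fix this, I would exploit that the ring $\mathcal{O}(D_{r})$ of holomorphic functions on the disk admits Weierstrass factorization: the common zero set $Z$ of $v_{1},\ldots,v_{n}$ is discrete, and with common vanishing order $k(\lambda)$ at $\lambda\in Z$ I can construct an analytic function $h(\kappa)$ on $D_{r}$ whose zero set is exactly $Z$ with those multiplicities. Then $\widetilde v(\kappa):=v(\kappa)/h(\kappa)$ is analytic on $D_{r}$, still satisfies $M(\kappa)\widetilde v(\kappa)=0$, and has no common zero; in particular $\widetilde v(\kappa)\neq 0$ for every $\kappa\in D_{r}$. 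Finally, uniqueness up to a scalar function follows because on the open dense set $D_{r}\setminus S$ the kernel of $M(\kappa)$ is one-dimensional, so any other analytic solution $v'(\kappa)$ must satisfy $v'(\kappa)=c(\kappa)\widetilde v(\kappa)$ pointwise there; writing $c(\kappa)=v'_{j}(\kappa)/\widetilde v_{j}(\kappa)$ on any open set where $\widetilde v_{j}\neq 0$ shows $c$ is meromorphic, and the cancellation of poles forced by the analyticity of $v'$ and the non-vanishing of $\widetilde v$ shows $c$ is in fact analytic on $D_{r}$.

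The main obstacle I foresee is the zero-cleanup step: one must carefully argue that a single analytic function $h$ on the disk can absorb all common zeros with the right multiplicities, which is precisely what Weierstrass products on $D_{r}$ provide. Once that is in hand, the rest is routine linear algebra and identity-theorem propagation.
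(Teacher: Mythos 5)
Your proposal is correct and follows essentially the same route as the paper: all $n\times n$ minors vanish identically by the identity theorem, a Cramer-type cofactor vector built from $n-1$ rows gives an analytic kernel vector, and common zeros are divided out. The only (minor) difference is your cleanup step, which invokes a Weierstrass product to remove common zeros on all of $D_{r}$, whereas the paper multiplies by finite factors $\prod_{j}\bigl(1-\kappa/\kappa_{j}\bigr)^{-a_{j}}$ and only claims a nowhere-vanishing solution on closed subdisks $\vert\kappa\vert\leq r_{2}<r$, conceding possibly infinitely many zeros in the open disk.
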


\begin{proof} Let $M^{\prime}(\kappa) $ be any $n\times n$ submatrix of~$M(\kappa) $ (when $m\geq n$), that is, $M^{\prime}$ is composed of~$n$ rows of $M(\kappa) $, then $\det M^{\prime} (\kappa) $ is analytic for $\kappa\in D_{r}$ and by hypothesis $\det M^{\prime}(\kappa) =0$ for $-r_{1}<\kappa<r_{1}$. This implies $\det M^{\prime}(\kappa) \equiv0$ for all $\kappa$, by analyticity. Thus $\operatorname{rank}(M(\kappa)) \leq n-1$ for all $\kappa\in D_{r}$. For each subset $J= \{ j_{1},\ldots,j_{n-1} \} $ with $1\leq j_{1}<\cdots<j_{n-1}\leq m$ let $M_{J,k}(\kappa) $ be the $(n-1) \times(n-1) $ submatrix of $M(\kappa) $ consisting of rows $\#$ $j_{1},\ldots,j_{n-1}$ and deleting column $\#k$, and $X_{J} (\kappa) :=[ \det M_{J,1}(\kappa) ,\ldots,\det M_{J,n}(\kappa)] $, an $n$-vector of analytic functions. There exists at least one set $J$ for which $X_{J}(0) \neq [ 0,\ldots,0 ] $ otherwise $\operatorname{rank} ( M (0)) <n-1$. By continuity there exists $\delta>0$ such that at least one $\det M_{J,k}(\kappa) \neq0$ for $ \vert \kappa \vert <\delta$ and $v(\kappa) = \big[(-1)^{k-1}\det M_{J,k}(\kappa) \big] _{k=1}^{n}$ is a~nonzero vector in the null-space of~$M(\kappa)$ (by Cramer's rule and the rank hypothesis). The analytic equation $M(\kappa) v(\kappa) =0$ holds in a neighborhood of $\kappa=0$ and thus for all of~$D_{r}$. If~\smash{$v(\kappa) =0$} for isolated points $\kappa_{1},\ldots,\kappa_{\ell}$ in $\vert \kappa\vert \leq r_{2}<r$ then $v(\kappa) $ can be multiplied by $\prod \limits_{j=1}^{\ell}\big( 1-\frac{\kappa}{\kappa_{j}}\big) ^{-a_{j}}$ for suitable positive integers $a_{1},\ldots,a_{\ell}$ to produce a~solution never zero in $\vert \kappa\vert \leq r_{2}<r$. (It may be possible that there are inf\/initely many zeros in the open set~$D_{r}$.)
\end{proof}

We include the parameter in the notations for $L$ and $L_{1}$. The $\ast$ operation replaces $x_{j}$ by $x_{j}^{-1}$, the constants by their conjugates, and transposing, but $\kappa$ is unchanged to preserve the analytic dependence, see Def\/inition~\ref{defadj}. For $x\in\mathbb{T}_{\rm reg}^{N}$ and real $\kappa$ the Hermitian adjoint of $L_{1} (x_{0};\kappa) $) agrees with $L_{1}(x_{0};\kappa) ^{\ast}$. The matrix $M(\kappa) $ is implicitly def\/ined by the linear
system with the unknown $B_{1}$
\begin{gather}
B_{1} =\sigma B_{1}\sigma,\nonumber\\
\upsilon L_{1}^{\ast}(x_{0};\kappa) B_{1}L_{1} (x_{0};\kappa ) =L_{1}^{\ast}(x_{0};\kappa) B_{1}L_{1}(x_{0};\kappa) \upsilon.\label{Mkeqn}
\end{gather}
(Recall $\upsilon=\tau(w_{0}) $.) The entries of $M (\kappa) $ are analytic in $\vert \kappa\vert <\frac{1}{2}$. The equation $B_{1}=\sigma B_{1}\sigma$ implies that $B_{1}$ has $n:=m_{\tau}^{2}+ ( n_{\tau}-m_{\tau} ) ^{2}$ possible nonzero entries, by the $\sigma$-block decomposition. The number of equations $m= n_{\tau}^{2}-\dim \{ A\colon A\upsilon=\upsilon A \} $. Because $w_{0}$ and $(N-1,N) $ generate~$\mathcal{S}_{N}$ and~$\tau$ is irreducible $A\sigma=\sigma A$ and $A\upsilon=\upsilon A$ imply $A=cI$ for $c\in \mathbb{C}$ by Schur's lemma. This implies $n\geq m-1$ (else there are two linearly independent solutions). By a result of Stembridge \cite[Section~3]{Stembridge1989} $n$ can be computed from the following: (recall $\omega:= \exp\frac{2\pi\mathrm{i}}{N}$) for $0\leq j\leq N-1$ set~$e_{j}$ equal to the multiplicity of $\omega^{j}$ in the list of the $n_{\tau}$ eigenvalues of $\upsilon$ and set $F_{\tau}(q) :=q^{e_{0}}+q^{e_{1}}+\dots+q^{e_{N-1}}$ then
\begin{gather*}
F_{\tau}(q) =\left\{ q^{n(\tau) }\prod_{i=1}^{N}\big( 1-q^{i}\big) \prod_{(i,j) \in\tau}\big(1-q^{h(i,j) }\big) ^{-1}\right\} \operatorname{mod}\big(1-q^{N}\big),
\end{gather*}
where $n(\tau) :=\sum\limits_{i=1}^{\ell(\tau) }(i-1) \tau_{i}$ and $h(i,j) $ is the hook length at $(i,j) $ in the diagram of $\tau$ (note $F_{\tau} (1) =n_{\tau}$). Thus $\dim \{ A\colon A\upsilon=\upsilon A \} =\sum\limits_{j=0}^{N-1}e_{j}^{2}$. For example let $\tau=(4,2) $ then $n_{\tau}=9$, $m_{\tau}=3$ and $n=45$ while $F_{\tau}(q) =2+q+2q^{2}+q^{3}+2q^{4}+q^{5}$ and $\dim \{ A\colon A\upsilon=\upsilon A\} =15$, $m=66$.

\begin{Theorem}
For $-1/h_{\tau}<\kappa<1/h_{\tau}$ there exists a unique Hermitian matrix $H$ such that $\mathrm{d}\mu=L^{\ast}HL\mathrm{d}m$. Also $( L_{1}(x_{0}) ^{\ast}) ^{-1}HL_{1}(x_{0}) ^{-1}$ commutes with $\sigma$.
\end{Theorem}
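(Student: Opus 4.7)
The plan is to extend the small-$\kappa$ sufficient condition of Theorem~\ref{suffctH} to the full interval $-1/h_{\tau}<\kappa<1/h_{\tau}$ by analytically continuing the matrix $H_{1}$ in~$\kappa$, and then to invoke uniqueness of the Hermitian form to conclude that the singular part of $\mu$ vanishes throughout.

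First I would apply Theorem~\ref{Matrixeq} to the linear system \eqref{Mkeqn}, whose coefficients are analytic in $\kappa$ on the disk $|\kappa|<1/2$ (since $L_{1}(x_{0};\kappa)$ is analytic there, as noted at the end of Section~\ref{locps}). This yields a nonzero analytic solution $B_{1}(\kappa)$, unique up to multiplication by an analytic scalar. On the real interval $|\kappa|<\min(b_{N},1/h_{\tau})$ with $b_{N}=(2(N^{2}-N+2))^{-1}$, the matrix $H_{1,\mathrm{ex}}(\kappa):=(L_{1}^{\ast}(x_{0};\kappa))^{-1}H_{\mathrm{ex}}(\kappa)L_{1}(x_{0};\kappa)^{-1}$, built from the measure $H_{\mathrm{ex}}$ of \cite{Dunkl2016}, also satisfies \eqref{Mkeqn} by $\upsilon H_{\mathrm{ex}}=H_{\mathrm{ex}}\upsilon$ and Theorem~\ref{H1comm}; uniqueness then gives $H_{1,\mathrm{ex}}(\kappa)=c(\kappa)B_{1}(\kappa)$, and after rescaling $B_{1}$ by a suitable nonvanishing analytic factor we may take $c\equiv 1$. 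Set $H(\kappa):=L_{1}^{\ast}(x_{0};\kappa)B_{1}(\kappa)L_{1}(x_{0};\kappa)$: this is an analytic matrix function agreeing with $H_{\mathrm{ex}}(\kappa)$ on the small interval, commuting with $\upsilon$, and satisfying $(L_{1}^{\ast}(x_{0}))^{-1}H(\kappa)L_{1}(x_{0})^{-1}=B_{1}(\kappa)$, which commutes with $\sigma$.

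Next I would invoke Theorem~\ref{suffctH} for $|\kappa|<b_{N}$: all hypotheses hold, so the form $\langle f,g\rangle_{K}:=\int_{\mathbb{T}^{N}}f^{\ast}L^{\ast}H L g\,\mathrm{d}m$ satisfies the axioms \eqref{admforms}. Uniqueness of the Hermitian form determined by \eqref{admforms} forces $\langle f,g\rangle_{K}=\int f^{\ast}\,\mathrm{d}\mu\,g$ and hence $\mu_{s}\equiv 0$ on that range. To propagate the identity $\mathrm{d}\mu=L^{\ast}HL\,\mathrm{d}m$ to the full interval, I would compare Fourier--Stieltjes coefficients: the moments $\widehat{\mu}_{\alpha}(\kappa)$ are rational in $\kappa$ without poles on $-1/h_{\tau}<\kappa<1/h_{\tau}$ (determined by the recurrence \eqref{A(a-b)} from $\widehat{\mu}_{0}$, whose inversions remain non-degenerate in the NSJP regime), while the Fourier coefficients of $L^{\ast}HL\,\mathrm{d}m$ are analytic in $\kappa$ since both $H(\kappa)$ and $L(x;\kappa)$ are. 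The two families coincide on the small interval and hence on the entire interval by analytic continuation, yielding $\mathrm{d}\mu=L^{\ast}HL\,\mathrm{d}m$ globally. Uniqueness of $H$ then follows from $\det L(x)\neq 0$ on $\mathbb{T}_{\mathrm{reg}}^{N}$, and the commutation property is Theorem~\ref{H1comm}. The hardest part is justifying the analyticity of $\widehat{\mu}_{\alpha}(\kappa)$ on the full interval, i.e.\ non-degeneracy of the recurrence \eqref{A(a-b)} throughout $|\kappa|<1/h_{\tau}$; this should follow from the same conditions that guarantee the NSJP expansion is well-defined in this range, but needs careful verification, as does the choice of nonvanishing analytic rescaling of $B_{1}(\kappa)$.
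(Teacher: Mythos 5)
Your overall architecture (an analytic family $B_{1}(\kappa)$ from Theorem~\ref{Matrixeq}, identification with the measure's matrix on a small interval via Theorems~\ref{suffctH} and~\ref{H1comm}, then analytic continuation in $\kappa$) is the paper's, but two steps have genuine gaps. First, Theorem~\ref{Matrixeq} has the hypothesis $\operatorname{rank}M(\kappa)=n-1$ on a real interval, i.e., that the solution space of \eqref{Mkeqn} is exactly one-dimensional there; you never establish this, yet you use its consequences (``unique up to an analytic scalar'' and $H_{1,\mathrm{ex}}(\kappa)=c(\kappa)B_{1}(\kappa)$). Theorem~\ref{H1comm} only gives existence of a nontrivial solution, hence $\operatorname{rank}M(\kappa)\leq n-1$. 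The paper closes exactly this hole: for $|\kappa|<b_{N}$ any solution $B_{1}$ yields the Hermitian solutions $B_{1}+B_{1}^{\ast}$ and $\mathrm{i}(B_{1}-B_{1}^{\ast})$, each of which by Theorem~\ref{suffctH} defines a form satisfying the functional equations, and uniqueness of such forms (density of the nonsymmetric Jack Laurent polynomials) forces all solutions to be proportional, giving $\operatorname{rank}M(\kappa)\geq n-1$ on $(-b_{N},b_{N})$. Relatedly, your rescaling ``so that $c\equiv1$'' is not available: the matrix coming from the Bochner construction is not known to depend analytically (or even continuously) on $\kappa$, so $c(\kappa)$ is merely a pointwise-defined scalar on the real interval. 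The paper instead normalizes $B_{1}(\kappa)$ by its trace (nonzero because $B_{1}$ is proportional to a positive-definite matrix for real $\kappa$) and never claims $H(\kappa)=H_{\mathrm{ex}}(\kappa)$; it only needs that the form built from $H(\kappa)$ satisfies the defining identities, after which uniqueness of the form at each fixed real $\kappa$ identifies it with the $\mu$-form and kills $\mu_{s}$.

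Second, your continuation step via moments is not justified as stated. The claim that $\widehat{\mu}_{\alpha}(\kappa)$ is determined by the recurrence \eqref{A(a-b)} from $\widehat{\mu}_{0}$ fails: the paper exhibits a second solution of precisely these recurrences ($\widehat{K}_{\alpha}=I$ for all $\alpha\in\boldsymbol{Z}_{N}$, zero otherwise), so the recurrences do not pin down the moments, and rationality/analyticity of $\widehat{\mu}_{\alpha}(\kappa)$ would have to be extracted from the algebraic construction of the form in \cite{Dunkl2016}, which you do not invoke. Moreover, analyticity in $\kappa$ of $\widehat{K}_{\alpha}(\kappa)=\int_{\mathbb{T}^{N}}L^{\ast}H(\kappa)L\,x^{-\alpha}\mathrm{d}m$ on a complex neighborhood of the whole interval requires a locally uniform integrable bound; Theorem~\ref{Lbnd} only gives $\Vert L^{\ast}HL\Vert\leq c\prod_{i<j}|x_{i}-x_{j}|^{-2|\kappa|}$, integrable for $|\kappa|<1/N$, whereas $1/h_{\tau}\geq1/N$, so the range $1/N\leq|\kappa|<1/h_{\tau}$ is not covered by the tools you cite. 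The paper's continuation is instead of the single integrated identity $\int_{\mathbb{T}^{N}}\{(x_{i}\mathcal{D}_{i}f)^{\ast}L^{\ast}H(\kappa)Lg-f^{\ast}L^{\ast}H(\kappa)Lx_{i}\mathcal{D}_{i}g\}\mathrm{d}m=0$ for fixed $f,g\in C^{(1)}(\mathbb{T}^{N};V_{\tau})$, followed by uniqueness of the Hermitian form; if you repair the first paragraph's points, switching to that formulation avoids having to control all Fourier--Stieltjes coefficients separately.
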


\begin{proof}
For any Hermitian $n_{\tau}\times n_{\tau}$ matrix $B$ def\/ine the Hermitian form
\begin{gather*}
\langle f,g\rangle _{B}:=\int_{\mathbb{T}^{N}}f(x)^{\ast}L(x) ^{\ast}BL(x) g(x)\mathrm{d}m(x)
\end{gather*}
for $f,g\in C^{(1) }\big( \mathbb{T}^{N};V_{\tau}\big) $. If the form satisf\/ies $\langle wf,wg\rangle _{B}=\langle f,g \rangle _{B}$ for all $w\in\mathcal{S}_{N}$ and $ \langle x_{i}\mathcal{D}_{i}f,g\rangle _{B}$ $= \langle f,x_{i}\mathcal{D}_{i}g \rangle _{B}$ for $1\leq i\leq N$ then $B$ is determined up to multiplication by a constant. This follows from the density of the span of the nonsymmetric Jack (Laurent) polynomials in $C^{(1) }\big(\mathbb{T}^{N};V_{\tau}\big)$. By Theorem~\ref{H1comm} there exists a~nontrivial solution of the system~(\ref{Mkeqn}) for $-1/h_{\tau}<\kappa<1/h_{\tau}$. Thus $\operatorname{rank}M(\kappa) \leq n-1$ in this interval. Now suppose that $B_{1}$ is a nontrivial solution of~(\ref{Mkeqn}) for some $\kappa$ such that $-b_{N}<\kappa<b_{N}$. Then both $B^{(1) }:=B_{1}+B_{1}^{\ast}$ and $B^{(2) }:=\mathrm{i}(B_{1}-B_{1}^{\ast}) $ are also solutions (by the invariance of~(\ref{Mkeqn}) under the adjoint operation). Let $H^{(i) }:=L_{1}^{\ast}(x_{0};\kappa) B^{(i)}L_{1}(x_{0};\kappa) $ for $i=1,2$ then by Theorem~\ref{suffctH} the forms $\langle \cdot,\cdot\rangle _{H^{(1) }}$ and $\langle \cdot,\cdot\rangle _{H^{(2) }}$ satisfy the above uniqueness condition. Hence either $B_{1}$ is Hermitian or $B^{(1) }=rB^{(2) }$ for some $r\neq0$ which implies $B_{1}=\frac{1}{2}(r-\mathrm{i}) B^{(2) }$, that is, $B_{1}$ is a scalar multiple of a Hermitian matrix. Thus there is a~unique (up to scalar multiplication) solution of~(\ref{Mkeqn}) which implies $\operatorname{rank}M(\kappa) \geq n-1$ in $-b_{N}<\kappa<b_{N}$.

Hence the hypotheses of Theorem~\ref{Matrixeq} are satisf\/ied, and there exists a nontrivial solution $B_{1}(\kappa) $ which is analytic in $\vert \kappa\vert <\frac{1}{2}$. Since the Hermitian form is positive def\/inite for $-1/h_{\tau}<\kappa<1/h_{\tau}$ we can use the fact that $B_{1}(\kappa) $ is a multiple of a positive-def\/inite matrix when $\kappa$ is real (in fact, of the matrix $H_{1}$ arising from $\mu$ as in Section~\ref{orthmu}) and its trace is nonzero (at least on a complex neighborhood of $\{ \kappa\colon -1/h_{\tau}<\kappa<1/h_{\tau}\} $ by continuity). Set $B_{1}^{\prime}(\kappa) :=\Big( n_{t}/\sum\limits_{i=1}^{n_{\tau}}B_{1}(\kappa) _{ii}\Big) B_{1}(\kappa) $, analytic and $\operatorname{tr}( B_{1}^{\prime}(\kappa)) =1$ thus the normalization produces a unique analytic (and Hermitian for real~$\kappa$) matrix in the null-space of~$M(\kappa) $. Let $H(\kappa) =L_{1}(x_{0};\kappa) ^{\ast}B_{1}(\kappa) L_{1}(x_{0};\kappa) $ then for f\/ixed $f,g\in C^{(1) }\big(\mathbb{T}^{N};V_{\tau}\big) $ and $1\leq i\leq N$
\begin{gather*}
\int_{\mathbb{T}^{N}}\left\{
\begin{matrix}
( x_{i}\mathcal{D}_{i}f(x) ) ^{\ast}L^{\ast}(x;\kappa) H(\kappa) L(x;\kappa) g(x) \\
-f(x) ^{\ast}L^{\ast}(x;\kappa) H(\kappa) L(x;\kappa) x_{i}\mathcal{D}_{i}g(x)
\end{matrix}\right\} \mathrm{d}m(x)
\end{gather*}
is an analytic function of $\kappa$ which vanishes for $-b_{N}<\kappa<b_{N}$ hence for all $\kappa$ in $-1/h_{\tau}<\kappa<1/h_{\tau}$; this condition is required for integrability. This completes the proof.
\end{proof}

By very complicated means we have shown that the torus Hermitian form for the vector-valued Jack polynomials is given by the measure $L^{\ast}HL\mathrm{d}m$. The orthogonality measure we constructed in~\cite{Dunkl2016} is absolutely continuous with respect to the Haar measure. We conjecture that $L^{\ast} ( x;\kappa) H(\kappa) L(x;\kappa)$ is integrable for $-1/\tau_{1}<\kappa<1/\ell(\tau) $ but $H(\kappa)$ is not positive outside $\vert \kappa \vert <1/h_{\tau}$ (the length of $\tau$ is $\ell(\tau) :=\max\{ i\colon \tau_{i}\geq1\}$). In as yet unpublished work we have found explicit formulas for $L^{\ast}HL$ for the two-dimensional representations $(2,1)$ and $(2,2) $ of $\mathcal{S}_{3}$ and $\mathcal{S}_{4}$ respectively, using hypergeometric functions. It would be interesting to f\/ind the normalization constant, that is, determine the scalar multiple of $H(\kappa) $ which results in $\langle 1\otimes T,1\otimes T \rangle _{H(\kappa)}= \langle T,T \rangle _{0}$ (see~(\ref{admforms})) the ``initial condition'' for the form. In \cite[Theorem~4.17(3)]{Dunkl2016} there is an inf\/inite series for $H(\kappa)$ but it involves all the Fourier coef\/f\/icients of~$\mu$.

\subsection*{Acknowledgement}

Some of these results were presented at the conference ``Dunkl operators, special functions and harmonic analysis'' held at Universit\"{a}t Paderborn, Germany, August 8--12, 2016.

\addcontentsline{toc}{section}{References}
\LastPageEnding

\end{document}